\newcommand{\cellstart}{\rule{0pt}{\heightof{A}+1ex}}
\newcommand{\cellfinish}{\rule[-2ex]{0pt}{2ex}}
\newcommand{\hookdownarrow}{\mathrel{\rotatebox[origin=c]{-90}{$\hookrightarrow$}}}
\theoremstyle{plain}
\newtheorem{Thm}[equation]{Theorem}
\newtheorem{Prop}[equation]{Proposition}
\newtheorem{Cor}[equation]{Corollary}
\newtheorem{Lem}[equation]{Lemma}
\newtheorem{Con}[equation]{Conjecture}
\theoremstyle{definition}
\newtheorem{Def}[equation]{Definition}
\newtheorem{Eg}[equation]{Example}
\newtheorem{Rem}[equation]{Remark}
\numberwithin{equation}{section}
\numberwithin{figure}{section}
\numberwithin{table}{section}
\newcommand{\A}{\mathbb A} 
\newcommand{\C}{\mathbb C} 
\newcommand{\Q}{\mathbb Q} 
\newcommand{\R}{\mathbb R} 
\newcommand{\T}{\mathbf T} 
\newcommand{\Z}{\mathbb Z} 
\newcommand{\Zp}{\mathbb{Z}_{\geq 0}} 
\newcommand{\bu}{\mathbf u} 
\newcommand{\bbf}{\mathbf f} 
\newcommand{\bg}{\mathbf g} 
\newcommand{\bk}{\mathbf k} 
\newcommand{\bm}{\mathbf m} 
\newcommand{\bn}{\mathbf n} 
\newcommand{\bb}{\mathbf b} 
\newcommand{\Cone}{\ensuremath{\operatorname{Cone}}}
\newcommand{\eqd}{\stackrel{\rm \tiny def}{=}} 
\newcommand{\sF}{\mathcal F} 
\newcommand{\sG}{\mathcal G} 
\newcommand{\sM}{\mathcal M} 
\newcommand{\sV}{\mathcal V}
\newcommand{\OZ}{{\mathcal O}\!_{Z}} 
\newcommand{\KY}{{K}\!_{Y}} 
\newcommand{\KX}{{K}\!_{X}} 
\newcommand{\dual}{^{\vee}} 
\newcommand{\mul}{^{\times}} 
\newcommand{\AHilb}{\ensuremath{A\text{-}\operatorname{\! Hilb}}}
\newcommand{\GHilb}[1]{\ensuremath{G\text{-}\operatorname{\! Hilb}}\C^{#1}}
\newcommand{\GHii}[1]{\ensuremath{G_{#1}\text{-}\operatorname{\! Hilb}}\C^3}
\newcommand{\GHil}{\ensuremath{G\text{-}\operatorname{\! Hilb}}}
\newcommand{\Spec}{\ensuremath{\operatorname{Spec}}}
\newcommand{\Hom}{\ensuremath{\operatorname{Hom}}}
\DeclareMathOperator{\GL}{GL}
\DeclareMathOperator{\SL}{SL}
\DeclareMathOperator{\wt}{wt} 
\DeclareMathOperator{\Irr}{Irr} 
\newcommand{\rg}{\ensuremath{\operatorname{Rep}}G} 
\newcommand{\rgssth}{\ensuremath{\operatorname{Rep}}^{ss}_{\theta}G}
\DeclareMathOperator{\D}{D} 
\newcommand{\lgr}{\frac{1}{a}(1,-r,r)}
\newcommand{\rgr}{\frac{1}{r-a}(1,r,-r)}
\newcommand{\diag}{\ensuremath{\operatorname{diag}}}
\newcommand{\git}{\ensuremath{\operatorname{/\!\!/}}}
\newcommand{\iso}{\cong}
\newcommand{\st}{\ensuremath{\operatorname{\bigm{|}}}}
\newcommand{\stB}{\ensuremath{\operatorname{\bigg{|}}}}
\newcommand{\stb}{\ensuremath{\operatorname{\Big{|}}}}
\newcommand{\QED}{\hfill$\blacksquare$\end{proof}\vskip 3pt}
\newcommand{\eeg}{~\hfill$\lozenge$\end{Eg}\vskip 3pt }
\newcommand{\erem}{~\hfill$\lozenge$\end{Rem}\vskip 3pt }
\newcommand{\ewar}{~\hfill$\lozenge$\end{War}\vskip 3pt }
\newcommand{\pull}{^{\ast}}
\newcommand{\inv}{^{-1}}
\newcommand{\hh}{\ensuremath{\operatorname{H}}} 
\newcommand{\mon}{\overline{M}_{\geq 0}} 
\newcommand{\lau}{\overline{M}} 
\newcommand{\wtga}{\wt_{\Gamma}} 
\newcommand{\wtgp}{\wt_{\Gamma'}}
\newcommand{\mth}{\sM_{\theta}} 
\newcommand{\mzero}{\sM_{0}} 
\newcommand{\yth}{Y_{\theta}} 
\newcommand{\lf}{\lfloor}
\newcommand{\rf}{\rfloor}
\newcommand{\one}{\boldsymbol{1}}
\newcommand{\Homz}{\ensuremath{\operatorname{Hom}}_{\mathbb Z}}
\newcommand{\zzz}{\overline{L}}
\newcommand{\ev}[1]{\ensuremath{\operatorname{ev}}_{#1}}
\newcommand{\hd}{{\mathfrak{h}\pull}} 
\newcommand{\mco}{{\Sigma_{\mathrm{max}}}} 
\newcommand{\wc}{{\mathfrak{C}}} 
\newcommand{\gr}{{\mathfrak{S}}} 
\newcommand{\sra}{{\Delta}} 
\newcommand{\wca}{{\mathfrak{C}}(r,a)} 
\newcommand{\gra}{{\gr(r,a)}} 
\newcommand{\grx}{{\gr(r,a)_0}} 
\newcommand{\gld}{\GL(\delta)} 
\newcommand{\vtheta}{\psi} 
\title[Terminal Quotient Singularities in Dimension three]{Terminal Quotient Singularities in Dimension three via variation of GIT}
\author[S.-J. Jung]{Seung-Jo Jung}
\address{Korea Institute for Advanced Study, 85 Hoegiro, Dongdaemun-gu, Seoul, 130-722, Republic of Korea}
\email{seungjo@kias.re.kr}
\keywords{the McKay correspondence, terminal quotient singularities, economic resolutions}
\subjclass[2010]{14E15, 14E16, 14L24}
\begin{document}


\begin{abstract}
A 3-fold terminal quotient singularity $X=\C^3/G$ admits the economic resolution $Y\rightarrow X$, which is ``close to being crepant". This paper proves that the economic resolution $Y$ is isomorphic to a distinguished component of a moduli space of certain $G$-equivariant objects using the King stability condition $\theta$ introduced by  K\k{e}dzierski\cite{Ked14}. 
\end{abstract}
\maketitle
\tableofcontents


\section{Introduction}\pagenumbering{arabic}\label{Sec:Introduction}
The motivation of this work stems from the philosophy of the {\em McKay correspondence}, which says that if a finite group~$G$ acts on a variety~$M$, then a crepant resolution of the quotient~$M/G$ can be realised as a moduli space of $G$-equivariant objects on~$M$.

Let $G\subset \GL_n(\C)$ be a finite group. A $G$-equivariant coherent sheaf~$\sF$ on~$\C^n$ is called a {\em $G$-constellation} if $\hh^{0}(\sF)$ is isomorphic to the regular representation $\C[G]$ of $G$ as a $\C[G]$-module. In particular, the structure sheaf of a $G$-invariant subscheme $Z \subset \C^n$ with $\hh^{0}(\OZ)$ isomorphic to $\C[G]$, which is called a {\em $G$-cluster}, is a $G$-constellation. 
Define the GIT stability parameter space
\[
\Theta = \left\{\theta \in \Hom_{\Z} (R(G),\Q) \st \theta\left(\C[G]\right) =0 \right\}\!,
\]
where $R(G)$ is the representation ring of $G$. For $\theta\in\Theta$, we say that a $G$-constellation $\sF$ is $\theta$-(semi)stable if $\theta(\sG)> 0\ (\theta(\sG)\geq 0)$ for every nonzero proper subsheaf $\sG$ of $\sF$.
A parameter $\theta$ is called {\em generic} if every $\theta$-semistable $G$-constellation is $\theta$-stable.

Let $\mth$ be the moduli space of $\theta$-semistable $G$-constellations. 
In the celebrated paper \cite{BKR}, Bridgeland, King and Reid proved that for a finite subgroup $G$ of $\SL_3(\C)$, $\mth$ is a crepant resolution of $\C^3/G$ if $\theta$ is generic. Craw and Ishii \cite{CI} showed that in the case of a finite abelian group $G \subset \SL_3(\C)$, {\em any} projective crepant resolution can be realised as $\mth$ for a suitable GIT parameter~$\theta$.

While the moduli space $\mth$ need not be irreducible\cite{CMTb} in general, Craw, Maclagan and Thomas~\cite{CMT} showed that for generic $\theta$, $\mth$ has a unique irreducible component $\yth$ containing the torus $(\C\mul)^n/G$ if $G$ is abelian. The component $\yth$ is birational to $\C^n/G$ and is called the {\em birational component\footnote{This component is also called the coherent component.}} of $\mth$.

On the other hand, in the case of $G \subset \GL_3(\C)$ giving a terminal quotient singularity $X=\C^3/G$ in dimension 3, $X$ has the {\em economic resolution} $\phi\colon Y \rightarrow X$ satisfying
\[\KY=\varphi\pull(\KX)+\sum_{1 \leq i < r}\frac{i}{r}E_i\]
with $E_i$'s prime exceptional divisors. 
K\k{e}dzierski\cite{Ked14} proved that $Y$ is isomorphic to the normalization of $\yth$ for some $\theta$.
The main theorem of this paper is that the economic resolution $Y$ of $X$ can be interpreted as a component of a moduli space of $G$-constellations as follows.
\begin{Thm}[Theorem~\ref{Thm:Main Theorem}]\label{intro_a:main theorem}
The economic resolution $Y$ of a 3-fold terminal quotient singularity $X=\C^3/G$ is isomorphic to the birational component $\yth$ of the moduli space $\mth$ of $\theta$-stable $G$-constellations for a suitable parameter~$\theta$.
\end{Thm}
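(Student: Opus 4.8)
\emph{Strategy.} Recall that a three--dimensional terminal quotient singularity is necessarily a cyclic quotient of type $\tfrac1r(1,a,r-a)$ with $\gcd(a,r)=1$, so that $G\iso\mu_r$ is abelian and every variety below is toric. K\k{e}dzierski's theorem already identifies $Y$ with the \emph{normalization} of $\yth$ for a suitable $\theta$; the entire content of the theorem is therefore to remove the word ``normalization'', i.e.\ to prove that the birational component $\yth$ is already normal (indeed smooth), so that the finite birational normalization map $Y\to\yth$ is forced to be an isomorphism.

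First I would fix the combinatorial model. For abelian $G$ a $G$-constellation is the same datum as a representation of the McKay quiver of $G$, subject to the commutation relations of the three coordinate actions, with dimension vector $\one=(1,\dots,1)$; equivalently it is a module over the skew group algebra $\C[x,y,z]\rtimes G$ whose underlying representation is $\C[G]$. Consequently $\mth$ is a toric GIT quotient of the affine space of such representations by the group $(\cx)^{r}/\cx$, and the birational component $\yth$ is the closure of the orbit of the open torus $(\cx)^3/G$, a reduced affine--toric variety. Following Craw--Maclagan--Thomas and the theory of toric variation of GIT, the normalization of $\yth$ is the toric variety of a fan $\Sigma_\theta\subset N_\R$ that depends only on the chamber of $\Theta$ containing $\theta$, where $N=\Z^3+\Z\cdot\tfrac1r(1,a,r-a)$ and $\Sigma_\theta$ refines the cone $\Cone(e_1,e_2,e_3)$ of $X$.

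Next I would pin down the parameter. Varying $\theta$ decomposes $\Theta$ into chambers, across whose walls the components $\yth$ undergo the standard flips and divisorial modifications, and the fans $\Sigma_\theta$ run through the coherent subdivisions of $\Cone(e_1,e_2,e_3)$. I would single out the chamber $C$ for which $\Sigma_\theta$ is exactly the subdivision defining the economic resolution, namely the one obtained by inserting the rays through the lattice points $v_i=\tfrac1r\bigl(i,\overline{ia},\overline{i(r-a)}\bigr)$ for $1\le i<r$ (here $\overline{\,\cdot\,}$ denotes reduction modulo $r$), which carry the exceptional divisors $E_i$; since $\gcd(a,r)=1$ one checks $\overline{ia}+\overline{i(r-a)}=r$, so that $v_i$ has discrepancy $\tfrac{i}{r}$, matching $\KY=\varphi\pull(\KX)+\sum_i\tfrac{i}{r}E_i$. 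For $\theta\in C$ this recovers, in the wall-and-chamber language, K\k{e}dzierski's conclusion that the normalization of $\yth$ is $Y$.

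I expect the main obstacle to be the final step: proving that for $\theta\in C$ the reduced component $\yth$ is normal. Cone by cone this is the statement that the affine semigroup $S_\sigma$ whose monoid algebra is the coordinate ring of the chart of $\yth$ attached to a maximal cone $\sigma$ of $\Sigma_\theta$ is \emph{saturated}, i.e.\ $S_\sigma=\Z S_\sigma\cap\sigma\dual$; here $S_\sigma$ is generated by the characters of the sections of the tautological line bundles, equivalently by the quiver paths regular on that chart, and the danger is a missing ``fractional'' generator that would render $\yth$ non-normal along the toric boundary where the $E_i$ meet. I would establish saturation directly from the arithmetic of the weights $(1,a,r-a)$, or, deformation-theoretically, by showing that the tautological family over the smooth toric variety $Y$ induces a classifying morphism $Y\to\mth$ that is injective on Zariski tangent spaces---equivalently that $\dimn_\C\operatorname{Ext}^1_{\C[x,y,z]\rtimes G}(\sF,\sF)=3$ for every $\sF$ parametrised by $\yth$, which makes each such point a smooth point of $\mth$ lying on the single component $\yth$. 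Either route forces $\yth$ to be smooth, hence normal, and the theorem follows; the work is concentrated precisely where the terminality hypothesis enters, and controlling it there is the crux.
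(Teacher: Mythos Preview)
Your reduction---grant K\k{e}dzierski's identification of $Y$ with the normalization of $\yth$ and then prove $\yth$ is normal---is logically sound, but it is not how the paper proceeds, and your proposal is vague precisely at the step where the real work lies.

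The paper does not black-box K\k{e}dzierski's result. Instead it introduces, for each weighted blowup in the Danilov sequence, three \emph{round down functions} $\phi_k\colon\lau\to M_k$ (maps of monomial lattices encoding the floor of the weighted valuation). These allow a recursive construction: given a $G_k$-brick $\Gamma'$ for the smaller quotient appearing after one blowup, the preimage $\Gamma=\phi_k^{-1}(\Gamma')$ is a $G$-brick with $S(\Gamma)=S(\Gamma')$. Starting from the base case $a=1$ (where $\GHilb{3}$ is the economic resolution and the bricks are Nakamura's $G$-graphs), this produces an explicit $G$-brick $\Gamma_\sigma$ for every maximal cone $\sigma$ of the economic fan with $S(\Gamma_\sigma)=\sigma^\vee\cap M$ \emph{by construction}; saturation is never checked, it is inherited through the recursion. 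A stability parameter $\theta$ is then built by the same recursion (pulling back parameters for the smaller groups via $\phi_k$ and adding a large multiple of a specific ray $\psi$), and the proof that each $\Gamma_\sigma$ is $\theta$-stable again reduces to the smaller group. The resulting open immersion $Y=\bigcup U(\Gamma_\sigma)\hookrightarrow\yth$ is proper because both sides are projective over $X$, hence closed, hence an isomorphism since both are irreducible of dimension~$3$. Normality of $\yth$ is a \emph{consequence}, not an ingredient.

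The gap in your proposal is that both of your suggested endgames---verifying saturation of the semigroups $S_\sigma$, or computing $\dim\operatorname{Ext}^1(\sF,\sF)=3$---require first knowing what the torus-fixed $\theta$-stable $G$-constellations are. Knowing only the fan $\Sigma_\theta$ of the normalization tells you the cones $\sigma$, but not which subsemigroup $S(\Gamma)\subset\sigma^\vee\cap M$ actually occurs, because that depends on the combinatorics of the brick $\Gamma$ itself. ``Directly from the arithmetic of the weights $(1,a,r-a)$'' is exactly what the round down functions make precise; without them, or an equivalent explicit description of the bricks, neither route can be executed. Your deformation-theoretic alternative is closer in spirit to the paper's further result $D(\Gamma)\cong\C^3$, but that too is proved recursively via the round down functions, not by an abstract $\operatorname{Ext}$ calculation.
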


To prove the theorem, first we generalize Nakamura's result~\cite{N01}. Let $G\subset \GL_n(\C)$ be a finite diagonal group.
Nakamura\cite{N01} introduced a $G$-graph which is a $\C$-basis of $\OZ$ for a torus invariant $G$-cluster $Z$. Using $G$-graphs, he described a local chart of $\GHil$. In this paper, we introduce a {\em $G$-prebrick} which is a $\C$-basis of $\hh^0(\sF)\iso \C[G]$ for a torus invariant $G$-constellation $\sF$.

For a $G$-prebrick $\Gamma$, by King\cite{K94}, we have an affine scheme $D(\Gamma)$ parametrising $G$-constellations whose basis is $\Gamma$. The affine scheme $D(\Gamma)$ is not necessarily irreducible, but $D(\Gamma)$ has a distinguished component $U(\Gamma)$ containing the torus $T=(\C\mul)^n/G$. In addition, we can show that $U(\Gamma)=\Spec \C[S(\Gamma)]$ for a semigroup $S(\Gamma)$. If the toric affine variety $U(\Gamma)$ has a torus fixed point, then $\Gamma$ is called a {\em $G$-brick}. We can prove that $\yth$ is covered by $U(\Gamma)$'s for suitable $G$-bricks $\Gamma$.

On the other hand, from \cite{MS, R87}, we know that a 3-fold quotient singularity $X= \C^3/G$ has terminal singularities if and only if the group $G$ is of type $\frac{1}{r}(1,a,r-a)$ with $r$ coprime to $a$, i.e.\ 
\[
G=\langle\diag(\epsilon,\epsilon^a,\epsilon^{r-a}) \st \epsilon^r=1\rangle.\]
In this case, the quotient variety $X=\C^3/G$ is not Gorenstein. 
While $X$ does not admit a crepant resolution, $X$ has the {\em economic resolution} $\phi\colon Y \rightarrow X$
obtained by a toric method called {\em weighted blowups} (or {\em Kawamata blowups}). For each step of the weighted blowups, we define three {\em round down functions}, which are maps between monomial lattices.

As $Y$ is toric, $Y$ is determined by its associated toric fan $\Sigma$ with the lattice $M$ of $G$-invariant monomials. From toric geometry, note that $Y$ is covered by torus invariant affine open subsets $U_{\sigma}=\Spec \C[\sigma\dual\cap M]$ for $\sigma \in \mco$ where $\mco$ denotes the set of maximal cones in $\Sigma$.

Using the round down functions, we find a set $\gr$ of $G$-bricks such that there exists a bijective map $\mco \rightarrow \gr$ sending $\sigma$ to $\Gamma_{\sigma}$ with $U(\Gamma_{\sigma})\iso U_{\sigma}$. We show that there exists a parameter $\theta\in\Theta$ such that $U(\Gamma_{\sigma})$'s cover $\yth$ for $\Gamma_{\sigma} \in \gr$. This proves that the economic resolution $Y$ is isomorphic to the birational component $\yth$ of $\mth$.

Moreover, we further prove $D(\Gamma)\iso \C^3$ for $\Gamma\in \gr$. So the irreducible component $\yth$ is actually a connected component. We conjecture that the moduli space $\mth$ is irreducible, which implies $Y \iso\mth$.

\subsubsection*{\bf Layout of this article}
In Section~\ref{Sec:G-bricks and moduli spaces of G-constellations}, we define $G$-(pre)bricks and describe the birational component $\yth$ using $G$-bricks.
Section~\ref{Sec:Wt Blups and Econ Resolns} explains how to obtain the economic resolutions using toric methods and defines round down functions. 
In Section~\ref{Sec:Main Theorem}, we explain how to find $G$-bricks and a parameter $\theta\in \Theta$ such that the economic resolution is isomorphic to the birational component $\yth$. 
In Section~\ref{Sec:Kedzierski's GIT chamber} we describe Kedzierski's GIT chamber using the $A_{r-1}$ root system. In Section~\ref{Sec:example for 1/12(1,7,5)}, we calculate $G$-bricks and Kedzierski's GIT chamber for the group of type $\frac{1}{12}(1,7,5)$.
\subsubsection*{\bf Acknowledgement} This article is part of my PhD thesis in the University of Warwick \cite{Thesis}. 
I am very grateful to my advisor Miles Reid for sharing his views on this subject and many ideas. I would like to thank Diane Maclagan, Alastair Craw, Hiraku Nakajima, and Timothy Logvinenko for valuable conversations. I thank Andrew Chan and Tom Ducat for their comments on earlier drafts.
\section{$G$-bricks and moduli spaces of $G$-constellations}\label{Sec:G-bricks and moduli spaces of G-constellations}
In this section we define a $G$-{\em prebrick} which is a generalized version of Nakamura's~$G$-graph from \cite{N01}. By using $G$-prebricks, we describe local charts of moduli spaces of $G$-constellations.

In this section, we restrict ourselves to the case where $G$ is a finite cyclic subgroup of $\GL_3(\C)$. It is possible to generalize part of the argument to include general finite small abelian groups in $\GL_n(\C)$ for any dimension $n$. However we prefer to focus on this case where we can avoid the difficulty of notation.
\subsection{Moduli spaces of $G$-constellations}\label{subsec:Moduli of G-constellations}
In this section, we review the construction of moduli spaces $\mth$ of $\theta$-stable $G$-constellations as described in \cite{K94,CI}.

Define the group $G=\langle\diag(\epsilon^{\alpha_1},\epsilon^{\alpha_2},\epsilon^{\alpha_3}) \st \epsilon^r=1 \rangle \subset \GL_3(\C)$. We call $G$ {\em the group of type $\frac{1}{r}(\alpha_1,\alpha_2,\alpha_3)$}. 
We can identify the set of irreducible representations of~$G$ with the character group $G\dual :=\Hom(G,\C\mul)$ of~$G$. Note that the regular representation~$\C[G]$ is isomorphic to~$\bigoplus_{\rho \in G\dual} \C \rho$.
\begin{Def}
A {\em $G$-constellation} on $\C^3$ is a $G$-equivariant coherent sheaf $\sF$ on $\C^3$ with $\hh^0(\sF)$ isomorphic to the regular representation~$\C[G]$ of $G$ as a $\C[G]$-module. 
\end{Def}

The representation ring $R(G)$ of $G$ is $\bigoplus_{\rho \in G\dual} \Z\cdot \rho$. 
Define the GIT stability parameter space
\[
\Theta = \left\{\theta \in \Hom_{\Z} (R(G),\Q) \st \theta\left(\C[G]\right) =0 \right\}\!.
\]
\begin{Def}
For a stability parameter $\theta \in \Theta$, we say that:
\begin{enumerate}
\item a $G$-constellation $\sF$ is {\em $\theta$-semistable} if $\theta(\sG) \geq 0$ for every proper submodule $\sG \subset \sF$.
\item a $G$-constellation $\sF$ is {\em $\theta$-stable} if $\theta(\sG) > 0$ for every nonzero proper submodule $\sG \subset \sF$.
\item $\theta$ is {\em generic} if every $\theta$-semistable object is $\theta$-stable.
\end{enumerate}
\end{Def}

It is known \cite{CMTb} that the language of $G$-constellations is the same as the language of the {\em McKay quiver representations}. Thus by King~\cite{K94}, the moduli spaces of $G$-constellations can be constructed using Geometric Invariant Theory (GIT).

Let $\rg$ be the affine scheme whose coordinate ring is
\[
\C[\rg]=\C[x_i,y_i,z_i \st  i \in G\dual ]\big{/}I_{G}
\]
where $I_{G}$ is the ideal generated by the following quadrics:
\begin{equation}\label{Eqtn:the commutativity relations on quivers}
\begin{cases}
x_{i}y_{i+\alpha_1} - y_{i}x_{i+\alpha_2},\\
x_{i}z_{i+\alpha_1} - z_{i}x_{i+\alpha_3},\\
y_{i}z_{i+\alpha_2} - z_{i}y_{i+\alpha_3}.
\end{cases}
\end{equation}
Let $\delta=(1,\ldots,1) \in \Zp^r$. The group~$\GL(\delta):=\prod_{i \in G\dual} \C\mul = (\C\mul)^r$ acts on~$\rg$ via change of basis.
For a parameter $\theta\in\Theta$, define the GIT quotient with respect to $\theta$
\[\rg \git_{{\theta}} \GL(\delta):=\rgssth /\GL(\delta)\]
parametrising closed $\GL(\delta)$-orbits in $\rgssth$ where $\rgssth$ denotes the $\theta$-semistable locus in $\rg$.

\begin{Thm}[King\cite{K94}] Let us define
$\mth:= \rg \git_{{\theta}} \GL(\delta)$.
\begin{enumerate}
\item The quasiprojective scheme $\mth$ is a coarse moduli space of $\theta$-semistable $G$-constellations up to S-equivalence. 
\item If $\theta$ is generic, the scheme $\mth$ is a fine moduli space of $\theta$-stable $G$-constellations.
\item The scheme $\mth$ is projective over $\mzero=\Spec \C[\rg]^{\GL(\delta)}$.
\end{enumerate}
\end{Thm}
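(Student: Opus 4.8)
The plan is to recognise this statement as King's GIT construction of moduli of quiver representations, specialised to the McKay quiver of $G$. First I would set up the dictionary: by the relations~\eqref{Eqtn:the commutativity relations on quivers} a closed point $p\in\rg$ is exactly a $G$-equivariant $\C[x,y,z]$-module structure on $\bigoplus_{i\in G\dual}V_i\iso\C[G]$ with each $\dimn V_i=1$, i.e.\ a $G$-constellation together with a chosen basis adapted to the isotypic decomposition (this forces the dimension vector $\delta=(1,\ldots,1)$). Two points lie in the same $\gld$-orbit precisely when the associated $G$-constellations are isomorphic, so set-theoretically $\mth$ must be the set of orbits meeting $\rgssth$, and the problem is to equip this with the correct scheme and moduli structure. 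To linearise, clear denominators so that $\theta\in\Hom_{\Z}(R(G),\Z)$ and define the character $\chi_\theta\colon\gld\to\C\mul$, $\chi_\theta\big((t_i)_i\big)=\prod_{i\in G\dual} t_i^{\theta(i)}$; the hypothesis $\theta(\C[G])=\sum_i\theta(i)=0$ ensures $\chi_\theta$ is trivial on the diagonal $\C\mul\hookrightarrow\gld$ that acts trivially on $\rg$, so $\chi_\theta$ linearises $\sO_{\rg}$ and $\mth=\rg\git_\theta\gld$ is the Mumford quotient.

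The \emph{heart} of the proof is the Hilbert--Mumford translation matching GIT (semi)stability with the (semi)stability of the definition above. A one-parameter subgroup $\lambda(t)=(t^{n_i})_{i\in G\dual}$ assigns weight $n_i$ to $V_i$; I would verify that $\lim_{t\to 0}\lambda(t)\cdot p$ exists if and only if the weight filtration $V^{\ge k}=\bigoplus_{n_i\ge k}V_i$ consists of $G$-submodules of $p$, and that every proper submodule occurs inside such a filtration. The Mumford weight is $\langle\chi_\theta,\lambda\rangle=\sum_i\theta(i)\,n_i$, which by Abel summation together with $\theta(\C[G])=0$ rewrites as $\sum_k\theta(V^{\ge k})$. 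Thus non-negativity (resp.\ strict positivity) of the Mumford weight for all destabilising $\lambda$ is equivalent to $\theta(\sG)\ge 0$ (resp.\ $>0$) for all nonzero proper submodules $\sG\subset p$; this identifies the GIT-semistable locus with $\rgssth$ and the GIT-stable locus with the $\theta$-stable points.

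With this in hand the three items follow from general GIT. Item~(iii) is immediate: $\mth=\Proj\bigoplus_{m\ge0}\C[\rg]^{\gld,\chi_\theta^m}$ has degree-zero part $\C[\rg]^{\gld}$, hence is projective over $\mzero=\Spec\C[\rg]^{\gld}$. For item~(i), the good quotient $\rgssth\to\mth$ separates closed orbits, which correspond to polystable $G$-constellations, i.e.\ to $S$-equivalence classes of semistable ones; combining this with descent of flat families shows that $\mth$ corepresents the moduli functor, giving a coarse moduli space. For item~(ii), genericity of $\theta$ makes semistable $=$ stable, so a Schur-type argument shows every stable point has stabiliser exactly the trivially-acting $\C\mul$; all orbits in $\rgssth$ are then closed, the quotient is geometric, and the tautological family on $\rgssth$ descends to a universal family, so $\mth$ is a fine moduli space.

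I expect the Hilbert--Mumford step to be the main obstacle: pinning down the exact correspondence between destabilising one-parameter subgroups and submodule filtrations, fixing the sign convention relating $\chi_\theta$ to $\theta$ so that the inequalities come out as in the definition, and carrying out the Abel-summation weight computation. The functorial bookkeeping behind items~(i)--(ii)---constructing the universal family, descending it along the geometric quotient, and checking that closure of orbits induces precisely $S$-equivalence---is routine but must be aligned carefully with King's statement.
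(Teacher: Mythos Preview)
The paper does not prove this theorem: it is stated as a result of King~\cite{K94} and no argument is given beyond the attribution. Your outline is a correct sketch of King's original proof (Proposition~3.1 and Proposition~5.3 of \cite{K94}), specialised to the McKay quiver with relations, so there is nothing to compare against in the present paper. The Hilbert--Mumford computation you describe, matching one-parameter subgroups with submodule filtrations and rewriting the Mumford weight as a sum of $\theta$-values on the filtration steps, is exactly the content of King's argument; your identification of this as the main obstacle is accurate.
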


\subsubsection*{\bf Birational component $\yth$ of the moduli space $\mth$} 
Let $\mth$ denote the moduli space of $\theta$-semistable $G$-constellations. Note that the moduli space $\mth$ need not be irreducible\cite{CMTb}.

Note that for every parameter $\theta$, there exists a natural embedding of the torus $T:=(\C\mul)^3/G$ into $\mth$. Indeed, for a $G$-orbit~$Z$ in the algebraic torus $\T:=(\C\mul)^3 \subset \C^3$, since $Z$ is a free $G$-orbit, $\OZ$ has no nonzero proper submodules. Thus $\OZ$ is a $\theta$-stable $G$-constellation. Hence it follows that the torus $T:=(\C\mul)^3/G$ is the fine moduli space of $\theta$-stable $G$-constellations supported on~$\T$ because any $G$-constellation supporting on a free $G$-orbit~$Z$ is isomorphic to $\OZ$.
\begin{Thm}[Craw, Maclagan and Thomas{\cite{CMT}}]\label{Thm:CMT}
Let $\theta \in \Theta$ be generic. Then $\mth$ has a unique irreducible component $\yth$ that contains the torus $T:=(\C\mul)^n/G$. Moreover $\yth$ satisfies the following properties:
\[
\begin{array}{ccc}
\yth & \hookrightarrow & \mth \\[4pt]
\Big\downarrow && \Big\downarrow \\[4pt]
\C^3/G & \hookrightarrow & \mzero
\end{array}
\]
\begin{enumerate}
\item $\yth$ is a not-necessarily-normal toric variety which is birational to the quotient variety $\C^3/G$.
\item $\yth$ is projective over the quotient variety $\C^3/G$.
\end{enumerate}
\end{Thm}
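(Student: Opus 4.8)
The plan is to realise $\yth$ as the closure of the torus $T=(\C\mul)^3/G$ inside $\mth$ and to deduce every assertion from the induced toric structure together with King's projective morphism to $\mzero$. I set $\yth:=\overline{T}$. The natural action of $(\C\mul)^3$ on $\C^3$ normalises the diagonal group $G$, induces an action on $\rg$ commuting with $\GL(\delta)$, and hence descends to an action of $T$ on $\mth$ for which the embedding $T\hookrightarrow\mth$ recorded above is equivariant; thus $T$ is a single $T$-orbit and $\yth$ is its orbit closure. The content of the theorem then splits into three tasks: showing $\yth$ is the unique irreducible component meeting $T$, identifying its toric structure, and verifying the two maps in the diagram.

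For the uniqueness I would first show that $T$ is an \emph{open} subvariety of $\mth$. Every $G$-constellation is a length-$r$ sheaf, so the universal family carries a relative support that is finite, hence proper, over $\mth$. The preimage of the coordinate hyperplanes $\{x_1x_2x_3=0\}$ is therefore closed, and its image in $\mth$—the locus of $G$-constellations whose support meets the hyperplanes—is closed as well. Its complement is exactly the free-orbit locus, which is $T$ by the identification made just before the theorem; hence $T$ is open. An open, irreducible subset has an irreducible closure that is automatically an irreducible component: if $\overline{T}\subsetneq Z$ with $Z$ irreducible and closed, then $T$ would be open and dense in $Z$, forcing $Z\subseteq\overline{T}$, a contradiction. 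Since any component containing $T$ contains $\overline{T}=\yth$ and $\yth$ is maximal irreducible, $\yth$ is the unique component containing $T$. Finally, being the closure of the single dense torus orbit $T$, the variety $\yth$ is a (not necessarily normal) toric variety with torus $T$.

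It remains to compare $\yth$ with $\C^3/G$. The affine quotient $\C^3/G$ is itself the toric variety with dense torus $T=(\C\mul)^3/G$, so $\yth$ and $\C^3/G$ are birational toric varieties, which finishes (i). For the diagram and (ii), let $\tau\colon\mth\to\mzero$ be King's morphism, which is projective and in particular proper. On the torus $\tau$ sends $[\OZ]$ to the point determined by the $G$-invariant functions on the orbit $Z$, so $\tau|_T$ is the torus inclusion into $\C^3/G$, whose closure in $\mzero$ is $\C^3/G$ itself. Properness makes $\tau(\yth)$ closed, and since $\tau(\yth)\supseteq\tau(T)$ while $\tau(\yth)\subseteq\overline{\tau(T)}$ by continuity, we get $\tau(\yth)=\overline{\tau(T)}=\C^3/G$; thus $\tau$ restricts to a surjection $\varphi\colon\yth\to\C^3/G$. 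This $\varphi$ is projective, being the restriction of the projective morphism $\tau$, and it is birational, being the identity on the dense torus $T$; this gives (ii) and the commutative square.

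The step I expect to be the main obstacle is the openness of $T$ in $\mth$, that is, controlling the locus where the support of a $\theta$-stable $G$-constellation remains inside the free torus $\T$; this is precisely what rules out spurious components through $T$ and underlies the uniqueness of $\yth$. A secondary technical point is pinning down $\tau(\yth)=\C^3/G$ exactly, for which one uses that the invariants of $\rg$ restrict on the torus to the $G$-invariant functions on $\C^3$.
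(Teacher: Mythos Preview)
The paper does not prove this theorem; it is quoted from Craw--Maclagan--Thomas~\cite{CMT} without argument. So there is no proof in the paper to compare your proposal against. From the paper's later remark that ``Craw, Maclagan and Thomas constructed $\yth$ as GIT quotient of a reduced affine scheme,'' one sees that the approach in~\cite{CMT} is to build $\yth$ directly as the GIT quotient of an explicitly identified irreducible subscheme of $\rg$, reading off toricity and reducedness from that description. Your route is different and more topological: you take $\yth:=\overline{T}$ inside $\mth$ and argue that openness of $T$ forces this closure to be an irreducible component. This is cleaner conceptually and avoids the explicit coordinate work; the price is that reducedness of $\yth$ (which the paper uses later, e.g.\ in Proposition~\ref{Prop:open immersion}) is not automatic from your description and would need a separate argument.

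Your argument is essentially correct. The openness step is fine: the universal family is finite over $\mth$, so the locus where the support meets $\{xyz=0\}$ is closed, and on the complement a $G$-constellation is supported on a free orbit, hence isomorphic to $\OZ$ because freeness forces the support to be $r$ reduced points with a length-one skyscraper at each. Your deduction that an open irreducible $T$ has closure equal to a component is correct. The identification $\tau(\yth)=\C^3/G$ does require, as you flag, knowing that the closure of $T$ in $\mzero$ is exactly $\C^3/G$; the paper records $\mzero\cong\C^3/G$ separately (see the remark after Conjecture~\ref{Con:M_theta irreducible}), which closes this gap.
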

\begin{Def}
The unique irreducible component $\yth$ in Theorem~\ref{Thm:CMT} is called the {\em birational component} of $\mth$. 
\end{Def}
Since Craw, Maclagan and Thomas\cite{CMT} constructed $\yth$ as GIT quotient of a reduced affine scheme, it follows that $\yth$ is reduced.
\begin{Rem}\label{Rem:Torus action on mth}
Since $\T=(\C\mul)^3$ acts on $\C^3$, the algebraic torus $\T$ acts on the moduli space $\mth$ naturally. Fixed points of the $\T$-action play a crucial role in the study of the moduli space $\mth$. \erem
\subsection{$G$-prebricks and local charts of $\mth$}\label{subsec:G-bricks and local charts of yth}
Let $G \subset \GL_3(\C)$ be the finite group of type~$\frac{1}{r}(\alpha_1,\alpha_2,\alpha_3)$. 
Define the lattice 
\begin{equation*}
L = \Z^3 + \Z\cdot \frac{1}{r}(\alpha_1,\alpha_2,\alpha_3),
\end{equation*}
which is an overlattice of $\zzz = \Z^3$ of finite index. Let $\{e_1,e_2,e_3\}$ be the standard basis of $\Z^3$. Set $\lau=\Homz(\zzz,\Z)$ and $M = \Homz(L,\Z)$. The two dual lattices $\lau$ and $M$ can be identified with Laurent monomials and $G$-invariant Laurent monomials, respectively. 
The embedding of~$G$ into the torus $(\C\mul)^3\subset \GL_3(\C)$ induces a surjective homomorphism
\[
\wt \colon \lau \longrightarrow G\dual
\]
whose kernel is $M$. 
Note that there are two isomorphisms of abelian groups $L/\Z^3 \rightarrow G$ and $\lau / M \rightarrow G\dual$.

Let $\mon$ denote genuine monomials in $\lau$, i.e.\
\begin{equation*}
\mon=\{x^{m_1}y^{m_2}z^{m_3} \in \lau \st m_1,m_2,m_3 \geq 0 \}.
\end{equation*}
For a set $A \subset \C[x^{\pm},y^{\pm},z^{\pm}]$, let $\langle A \rangle$ denote the $\C[x,y,z]$-submodule of $\C[x^{\pm},y^{\pm},z^{\pm}]$ generated by $A$.

Let $\sigma_{+}$ be the cone in $L_{\R}:=L \otimes_{\Z} \R $ generated by $e_1,e_2,e_3$.
Note that the corresponding affine toric variety 
$U_{\sigma_{+}}=\Spec \C[\sigma_+\dual \cap M]$ is isomorphic to the quotient variety $\C^3/G=\Spec \C[x,y,z]^G$.
%
%
\begin{Def}\label{Def:G-prebrick}\index{$G$-prebrick}
A {\em $G$-prebrick} $\Gamma$ is a subset of Laurent monomials in $\C[x^{\pm},y^{\pm},z^{\pm}]$ satisfying:
\begin{enumerate}
\item the monomial $\one$ is in $\Gamma$.
\item for each weight $\rho \in G\dual$, there exists a unique Laurent monomial $\bm_{\rho}\in \Gamma$ of weight $\rho$, i.e.\ $\wt\colon\Gamma \rightarrow G\dual$ is bijective.
\item if $\bn' \cdot \bn \cdot \bm_{\rho} \in \Gamma$ for $\bm_{\rho}\in \Gamma$ and $\bn, \bn' \in \mon$, then $\bn \cdot \bm_{\rho} \in \Gamma$.
\item the set $\Gamma$ is {\em connected} in the sense that for any element $\bm_{\rho}$, there is a (fractional) path in $\Gamma$ from $\bm_{\rho}$ to $\one$  whose steps consist of multiplying or dividing by one of $x,y,z$.
\end{enumerate}
\end{Def}
For a Laurent monomial $\bm \in \lau$, let $\wtga(\bm)$ denote the unique element $\bm_{\rho}$ in $\Gamma$ of the same weight as $\bm$. 
\begin{Rem}
Nakamura's $G$-graph $\Gamma$ in\cite{N01} is a $G$-prebrick
 because if a monomial $\bn' \cdot \bn$ is in $\Gamma$ for two monomials $\bn, \bn' \in \mon$, then $\bn$ is in $\Gamma$. 
The main difference between $G$-graphs and $G$-prebricks is that elements of $G$-prebricks are allowed to be Laurent monomials, not just genuine monomials.\erem
\begin{Eg} \label{Eg:G graph 1/7(1,3,4) }
Let $G$ be the group of type $\frac{1}{7}(1,3,4)$. Then 
\begin{align*}
\Gamma_1&=\Big\{1,y,y^2,z,\tfrac{z}{y},\tfrac{z^2}{y},\tfrac{z^2}{y^2}\Big\}, \\[4pt]
\Gamma_2&=\Big\{1,z,y,y^2,\tfrac{y^2}{z},\tfrac{y^3}{z},\tfrac{y^3}{z^2}\Big\}
\end{align*}
are $G$-prebricks.
For $\Gamma_1$, we have $\wt_{\Gamma_1}(x)=\tfrac{z}{y}$ and $\wt_{\Gamma_1}(y^3)=\tfrac{z^2}{y^2}$.
\eeg
For a $G$-prebrick $\Gamma=\{\bm_{\rho}\}$, as an analogue of \cite{N01},  define $S(\Gamma)$ to be the subsemigroup of $M$ generated by $\dfrac{\bn \cdot \bm_{\rho}}{\wtga(\bn \cdot \bm_{\rho})}$ for all $\bn \in \mon$, $\bm_{\rho}\in \Gamma$. Define a
cone $\sigma(\Gamma)$ in $L_{\R}=\R^3$ as follows:
\begin{align*}
\sigma(\Gamma)
&=S(\Gamma)\dual\\
&=\left\{ \bu \in L_{\R} \stB  \left\langle \bu, \frac{\bn \cdot \bm_{\rho}}{\wtga(\bn \cdot \bm_{\rho})}\right\rangle \geq 0, \quad \forall \bm_{\rho}\in \Gamma, \ \bn \in \mon \right\}\!.
\end{align*}
Observe that:
\begin{enumerate}
\item $\big(\mon \cap M \big)\subset S(\Gamma)$,
\item $\sigma(\Gamma) \subset \sigma_{+}$,
\item $S(\Gamma) \subset \big(\sigma(\Gamma)\dual \cap M\big)$.
\end{enumerate}
\begin{Lem}\label{Lem:crucial finitely gen}
Let $\Gamma$ be a $G$-prebrick. Define
\[
B(\Gamma) := \big\{ \bbf \cdot \bm_{\rho} \st \bm_{\rho} \in \Gamma, \ \bbf \in\{x,y,z\} \big\} \!\setminus \!\Gamma.
\]
Then the semigroup $S(\Gamma)$ is generated by $\frac{\bb}{\wtga(\bb)}$ for all $\bb \in B(\Gamma)$ as a semigroup. In particular, $S(\Gamma)$ is finitely generated as a semigroup.
\end{Lem}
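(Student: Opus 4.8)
The plan is to reduce every defining generator of $S(\Gamma)$ to a product of the distinguished generators $\frac{\bb}{\wtga(\bb)}$, $\bb\in B(\Gamma)$, by walking along a monomial path and telescoping. Finite generation then comes for free: property (ii) makes $\wt\colon\Gamma\to G\dual$ a bijection, so $\Gamma$ has exactly $|G|=r$ elements and hence $B(\Gamma)$ has at most $3r$ elements.

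First I would fix a defining generator $\frac{\bn\cdot\bm_\rho}{\wtga(\bn\cdot\bm_\rho)}$ with $\bm_\rho\in\Gamma$ and $\bn\in\mon$, and write $\bn=\bbf_1\cdots\bbf_k$ as an ordered product of $k=\deg\bn$ variables $\bbf_i\in\{x,y,z\}$. Along this factorisation I build a path that stays inside $\Gamma$: set $\bp_0=\bm_\rho$ and $\bp_i=\wtga(\bbf_i\,\bp_{i-1})$ for $1\le i\le k$. Since $\wtga$ takes values in $\Gamma$, each $\bp_i\in\Gamma$, and because $\wtga$ preserves weight an easy induction gives $\wt(\bp_i)=\wt(\bbf_1\cdots\bbf_i\,\bm_\rho)$; in particular $\bp_k=\wtga(\bn\,\bm_\rho)$.

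The heart of the argument is the telescoping identity
\[
\frac{\bn\cdot\bm_\rho}{\wtga(\bn\cdot\bm_\rho)}
=\frac{\bbf_1\cdots\bbf_k\cdot\bp_0}{\bp_k}
=\prod_{i=1}^{k}\frac{\bbf_i\,\bp_{i-1}}{\bp_i},
\]
in which the intermediate terms $\bp_1,\dots,\bp_{k-1}$ cancel. Each factor lies in $M$, because $\wt(\bbf_i\,\bp_{i-1})=\wt(\bp_i)$, and is of exactly one of two kinds: it equals $\one$ when $\bbf_i\,\bp_{i-1}\in\Gamma$ (so that $\bp_i=\bbf_i\,\bp_{i-1}$), and it equals $\frac{\bb}{\wtga(\bb)}$ with $\bb=\bbf_i\,\bp_{i-1}\in B(\Gamma)$ otherwise. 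Hence every generator of $S(\Gamma)$ is a product of the boundary generators. The reverse inclusion is trivial, since each $\frac{\bb}{\wtga(\bb)}$ is itself a defining generator (take $\bn=\bbf$), so the two generating sets span the same semigroup.

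I expect the only delicate point to be the bookkeeping that the path ends at $\bp_k=\wtga(\bn\,\bm_\rho)$ — that is, that iterating the one-step reduction recovers the $\Gamma$-representative of $\bn\,\bm_\rho$ — which rests solely on $\wtga$ depending only on the weight; the telescoping and the counting are routine. It is worth noting that this proof uses only the finiteness of $\Gamma$ and the well-definedness of $\wtga$ from property (ii), and not the saturation or connectedness conditions (iii)--(iv).
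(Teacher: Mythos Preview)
Your proof is correct and is essentially the same argument as the paper's: the paper peels off one variable $\bbf$ from $\bn$ and writes
\[
\frac{\bn\cdot\bm_\rho}{\wtga(\bn\cdot\bm_\rho)}
=\frac{(\bn/\bbf)\cdot\bm_\rho}{\wtga((\bn/\bbf)\cdot\bm_\rho)}\cdot
\frac{\bbf\cdot\bm_{\rho'}}{\wtga(\bbf\cdot\bm_{\rho'})},
\qquad \bm_{\rho'}=\wtga\big((\bn/\bbf)\cdot\bm_\rho\big),
\]
and then inducts on $\deg\bn$; unrolling that induction gives exactly your telescoping product $\prod_{i=1}^k \frac{\bbf_i\,\bp_{i-1}}{\bp_i}$ with $\bp_i=\wtga(\bbf_i\,\bp_{i-1})$. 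Your observation that only property~(ii) is used (not (iii) or (iv)) is a nice extra remark the paper does not make.
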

\begin{proof}
Let $S$ be the subsemigroup of $M$ generated by $\frac{\bb}{\wtga(\bb)}$ for all $\bb \in B(\Gamma)$. Clearly, $S \subset S(\Gamma)$. For the opposite inclusion, it is enough to show that the generators of $S(\Gamma)$ are in $S$.

An arbitrary generator of $S(\Gamma)$ is of the form $\frac{\bn \cdot \bm_{\rho}}{\wtga(\bn \cdot \bm_{\rho})}$ for some $\bn \in \mon$, $\bm_{\rho}\in \Gamma$. We may assume that $\bn \cdot \bm_{\rho} \not\in \Gamma$. In particular, $\bn \neq \one$. Since $\bn$ has positive degree, there exists $\bbf \in \{x,y,z\}$ such that $\bbf$ divides $\bn$, i.e.\ $\frac{\bn}{\bbf} \in \mon$ and $\deg(\frac{\bn}{\bbf}) < \deg(\bn)$. Let $\bm_{\rho'}$ denote $\wtga(\frac{\bn}{\bbf} \cdot \bm_{\rho})$. Note that \[
\wtga(\bbf \cdot \bm_{\rho'})=\wtga(\bbf \cdot \frac{\bn}{\bbf} \cdot \bm_{\rho})=\wtga(\bn \cdot \bm_{\rho}).
\]
Thus
\begin{align*}
\frac{\bn \cdot \bm_{\rho}}{\wtga(\bn \cdot \bm_{\rho})} 
&= \frac{\frac{\bn}{\bbf} \cdot \bm_{\rho}}{\wtga(\frac{\bn}{\bbf} \cdot \bm_{\rho})} \cdot \frac{\bbf \cdot \wtga(\frac{\bn}{\bbf} \cdot \bm_{\rho})}{\wtga(\bn \cdot \bm_{\rho})}\\
&=\frac{\frac{\bn}{\bbf} \cdot \bm_{\rho}}{\wtga(\frac{\bn}{\bbf} \cdot \bm_{\rho})} \cdot \frac{\bbf \cdot \bm_{\rho'}}{\wtga(\bbf \cdot \bm_{\rho'})}.
\end{align*}
By induction on the degree of monomial $\bn$, the assertion is proved.
\end{proof}
The set $B(\Gamma)$ in the lemma above is called the {\em Border bases} of $\Gamma$. As $B(\Gamma)$ is finite, the semigroup $S(\Gamma)$ is finitely generated as a semigroup. Thus the semigroup~$S(\Gamma)$ defines an affine toric variety.
Define two affine toric varieties:
\begin{align*}
U(\Gamma) &:= \Spec \C[S(\Gamma)],\\
U^{\nu}(\Gamma)&:= \Spec \C[\sigma(\Gamma)\dual\cap M]. 
\end{align*}
Note that the torus $\Spec \C[M]$ of $U(\Gamma)$ is isomorphic to $T=(\C\mul)^3/G$ and that $U^{\nu}(\Gamma)$ is the normalization of $U(\Gamma)$.

Craw, Maclagan and Thomas\cite{CMTb} showed that there exists a torus invariant $G$-cluster which does not lie in the birational component~$\yth$.
The following definition is implicit in \cite{CMTb}.
\begin{Def}
A $G$-prebrick $\Gamma$ is called a {\em $G$-brick} if the affine toric variety $U(\Gamma)$ contains a torus fixed point.
\end{Def}
From toric geometry, $U(\Gamma)$ has a torus fixed point if and only if $S(\Gamma) \cap (S(\Gamma))\inv=\{\one\}$, i.e.\ the cone $\sigma(\Gamma)$ is a 3-dimensional cone.
\begin{Eg} \label{Eg:cone assoc to G graph 1/7(1,3,4) }
Consider the $G$-prebricks $\Gamma_1, \Gamma_2$ in Example~\ref{Eg:G graph 1/7(1,3,4) }. By Lemma~\ref{Lem:crucial finitely gen}, $S(\Gamma_1)$ is generated by $\tfrac{y^5}{z^2},\tfrac{z^3}{y^4},\tfrac{xy}{z}$. We have
\begin{align*}
\sigma(\Gamma_1)&=\left\{ \bu \in \R^3 \stb \langle \bu, \bm \rangle \geq 0, \text{ for all $\bm\in\big\{\tfrac{y^5}{z^2},\tfrac{z^3}{y^4},\tfrac{xy}{z}\big\}$}\right\}\! , \\[3pt]
&=\Cone\left((1,0,0),\tfrac{1}{7}(3,2,5),\tfrac{1}{7}(1,3,4)\right).
\end{align*}
Similarly, we can see that
\begin{align*}
\sigma(\Gamma_2)&=\left\{ \bu \in \R^3 \stb  \langle \bu, \bm \rangle \geq 0, \text{ for all $\bm\in\big\{\tfrac{y^4}{z^3},\tfrac{z^4}{y^3},\tfrac{xz^2}{y^3}\big\}$}\right\}\! ,\\[3pt]
&=\Cone\left((1,0,0),\tfrac{1}{7}(1,3,4),\tfrac{1}{7}(6,4,3)\right).
\end{align*}
Since $S(\Gamma_1)=\sigma(\Gamma_1)\dual \cap M  $ and $S(\Gamma_2) = \sigma(\Gamma_2)\dual \cap M $, the two $G$-prebricks $\Gamma_1$, $\Gamma_2$ are $G$-bricks. Moreover the two toric varieties $U(\Gamma_1)$ and $U(\Gamma_2)$ are smooth.\eeg
\label{subsec:U(Gamma) parametrising}
Let $\Gamma$ be a $G$-prebrick. Define 
\[
C(\Gamma) := \langle \Gamma \rangle / \langle B(\Gamma)\rangle.
\]
The module $C(\Gamma)$ is a torus invariant $G$-constellation. A submodule $\sG$ of $C(\Gamma)$ is determined by a subset $A \subset \Gamma$, which forms a $\C$-basis of~$\sG$.
\begin{Lem}\label{Lem:combinatorial description of submodule}
Let $A$ be a subset of $\Gamma$. The following are equivalent.
\begin{enumerate}
\item The set $A$ forms a $\C$-basis of a submodule of $C(\Gamma)$.
\item If $\bm_{\rho} \in A$ and $\bbf \in \{x,y,z\}$, then $\bbf \cdot \bm_{\rho} \in \Gamma$ implies $\bbf \cdot \bm_{\rho} \in A$.
\end{enumerate}
\end{Lem}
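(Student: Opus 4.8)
The plan is to make the $\C[x,y,z]$-module structure of $C(\Gamma)=\Span{\Gamma}/\Span{B(\Gamma)}$ completely explicit and then reduce the submodule condition to closure under multiplication by the three algebra generators $x,y,z$.

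First I would identify a $\C$-basis of $C(\Gamma)$ together with the action formula. As $\C$-vector spaces, $\Span{\Gamma}$ is spanned by the monomials $\bn\cdot\bm_{\rho}$ with $\bn\in\mon$, $\bm_{\rho}\in\Gamma$, and $\Span{B(\Gamma)}$ by the monomials $\bn\cdot\bb$ with $\bn\in\mon$, $\bb\in B(\Gamma)$. I claim the images of the monomials of $\Gamma$ form a $\C$-basis of $C(\Gamma)$. For spanning: if a monomial $\bm=\bn\cdot\bm_{\rho}\in\Span{\Gamma}$ does not lie in $\Gamma$, choose such an expression with $\deg\bn$ minimal and factor a variable $\bbf$ off $\bn$; minimality forces $\bbf\cdot\bm_{\rho}\notin\Gamma$, hence $\bbf\cdot\bm_{\rho}\in B(\Gamma)$ and $\bm\in\Span{B(\Gamma)}$. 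For linear independence I would invoke the $G$-prebrick axiom (iii): if some $\bm_{\rho}\in\Gamma$ lay in $\Span{B(\Gamma)}$, writing $\bm_{\rho}=\bn\cdot\bbf\cdot\bm_{\rho'}$ with $\bm_{\rho'}\in\Gamma$ and $\bbf\cdot\bm_{\rho'}\in B(\Gamma)$, axiom (iii) would give $\bbf\cdot\bm_{\rho'}\in\Gamma$, contradicting $\bbf\cdot\bm_{\rho'}\notin\Gamma$; thus $\Gamma\cap\Span{B(\Gamma)}=\emptyset$. Consequently, for $\bm_{\rho}\in\Gamma$ and $\bbf\in\{x,y,z\}$ the action on $C(\Gamma)$ is
\[
\bbf\cdot\bm_{\rho}=\begin{cases}\bbf\bm_{\rho}, & \bbf\bm_{\rho}\in\Gamma,\\[2pt] 0, & \bbf\bm_{\rho}\in B(\Gamma),\end{cases}
\]
since $\bbf\bm_{\rho}\in B(\Gamma)\subset\Span{B(\Gamma)}$ maps to $0$.

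With the basis and the action in hand, the equivalence is the core of the argument. Because $x,y,z$ generate $\C[x,y,z]$ as an algebra, a $\C$-subspace is a submodule if and only if it is stable under multiplication by each of $x,y,z$. Set $\sG:=\Span{A}$ for $A\subset\Gamma$. For (2)$\Rightarrow$(1): given $\bm_{\rho}\in A$ and $\bbf\in\{x,y,z\}$, either $\bbf\bm_{\rho}\in B(\Gamma)$ and $\bbf\cdot\bm_{\rho}=0\in\sG$, or $\bbf\bm_{\rho}\in\Gamma$ and then $\bbf\bm_{\rho}\in A$ by (2), so $\bbf\cdot\bm_{\rho}=\bbf\bm_{\rho}\in\sG$; hence $\sG$ is a submodule with $\C$-basis $A$. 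For (1)$\Rightarrow$(2): if $A$ is a $\C$-basis of a submodule $\sG=\Span{A}$ and $\bbf\bm_{\rho}\in\Gamma$ for some $\bm_{\rho}\in A$, then $\bbf\cdot\bm_{\rho}=\bbf\bm_{\rho}\in\sG$, and since $\bbf\bm_{\rho}$ is one of the basis vectors of $C(\Gamma)$, membership in $\Span{A}$ forces $\bbf\bm_{\rho}\in A$, which is exactly condition (2).

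The whole content sits in the first step, so I expect the only genuine (though still elementary) obstacle to be the verification that $\Gamma\cap\Span{B(\Gamma)}=\emptyset$ via axiom (iii); once the two-case formula for multiplication by a variable is established, both implications reduce to the trivial observation that a single basis vector lies in the span of a sub-collection of basis vectors precisely when it belongs to that sub-collection.
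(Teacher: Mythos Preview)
The paper states this lemma without proof, so there is no argument to compare against. Your proof is correct and is exactly the verification the paper leaves to the reader: once you know that the images of the monomials of $\Gamma$ form a $\C$-basis of $C(\Gamma)=\langle\Gamma\rangle/\langle B(\Gamma)\rangle$ and that multiplication by a variable $\bbf$ sends a basis vector $\bm_{\rho}$ either to the basis vector $\bbf\bm_{\rho}$ (when $\bbf\bm_{\rho}\in\Gamma$) or to $0$ (when $\bbf\bm_{\rho}\in B(\Gamma)$), the equivalence is immediate. Your use of axiom~(iii) of Definition~\ref{Def:G-prebrick} to show $\Gamma\cap\langle B(\Gamma)\rangle=\emptyset$ is the one nontrivial point, and it is handled correctly; since both $\langle\Gamma\rangle$ and $\langle B(\Gamma)\rangle$ are spanned by Laurent monomials, this disjointness is precisely what guarantees that the images of the (distinct) monomials of $\Gamma$ remain linearly independent in the quotient.
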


Let $p$ be a point in $U(\Gamma)$. Then the evaluation map 
\[\ev{p}\colon S(\Gamma)\rightarrow (\C,\times),\]
is a semigroup homomorphism.

To assign a $G$-constellation $C(\Gamma)_p$ to the point $p$ of $U(\Gamma)$, first consider the $\C$-vector space with basis $\Gamma$ whose $G$-action is induced by the $G$-action on $\C[x,y,z]$. 
Endow it with the following $\C[x,y,z]$-action: 
\begin{align}\label{Eqtn:C(Gamma) at p}
\bn \ast \bm_{\rho} := \ev{p}
\left(\dfrac{\bn \cdot \bm_{\rho}}{\wtga(\bn \cdot \bm_{\rho})} \right)\wtga(\bn \cdot \bm_{\rho}),
\end{align}
for a monomial $\bn \in \mon$ and $\bm_{\rho} \in \Gamma$.
\begin{Lem}\label{Lem:C(Gamma)p}
Let $\Gamma$ be a $G$-prebrick.
\begin{enumerate}
\item For every $p \in U(\Gamma)$, $C(\Gamma)_{p}$ is a $G$-constellation.
\item For every $p \in U(\Gamma)$, $\Gamma$ is a $\C$-basis of $C(\Gamma)_p$.
\item If $p$ and $q$ are different points in $U(\Gamma)$, then $C(\Gamma)_{p} \not \iso C(\Gamma)_{q}$.
\item Let $Z \subset \T=(\C\mul)^3$ be a free $G$-orbit and $p$ the corresponding point in the torus $\Spec \C[M]$ of $U(\Gamma)$. Then $C(\Gamma)_{p} \iso \OZ$ as $G$-constellations.
\item If $U(\Gamma)$ has a torus fixed point $p$, then $C(\Gamma)_{p} \iso C(\Gamma)$.
\end{enumerate} 
\end{Lem}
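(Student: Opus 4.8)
The plan is to dispatch the routine assertions (1), (2), (4) and (5) by direct construction and to concentrate the real effort on the injectivity statement (3). For (1) and (2), the only thing to verify is that \eqref{Eqtn:C(Gamma) at p} defines a $\C[x,y,z]$-module structure on the vector space with basis $\Gamma$. Associativity $\bn_1\ast(\bn_2\ast\bm_{\rho}) = (\bn_1\bn_2)\ast\bm_{\rho}$ is the substantive point: writing $\bm_{\rho'} = \wtga(\bn_2\bm_{\rho})$, both sides are supported on $\wtga(\bn_1\bn_2\bm_{\rho})$, and equality of the scalar coefficients follows from the identity
\[
\frac{\bn_2\bm_{\rho}}{\bm_{\rho'}}\cdot\frac{\bn_1\bm_{\rho'}}{\wtga(\bn_1\bn_2\bm_{\rho})} = \frac{\bn_1\bn_2\bm_{\rho}}{\wtga(\bn_1\bn_2\bm_{\rho})}
\]
in $S(\Gamma)$ once one notes that each factor lies in $S(\Gamma)$ and that $\ev{p}$ is a semigroup homomorphism. $G$-equivariance is automatic since $\bm_{\rho}$ is a weight vector sent to a multiple of $\wtga(\bn\bm_{\rho})$; as $\wt\colon\Gamma\to G\dual$ is a bijection, $\hh^0 \iso \C[G]$, proving (1), and $\Gamma$ is a $\C$-basis by construction, proving (2).

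For (4), a point $p$ of the torus $\Spec\C[M]$ corresponding to a free orbit $Z = G\cdot t$ satisfies $\ev{p}(m) = m(t)$ for $m\in M$. I would define $\phi\colon C(\Gamma)_p\to\OZ$ by sending $\bm_{\rho}$ to the restriction of the Laurent monomial $\bm_{\rho}$ to $Z\subset\T$. This is $G$-equivariant, and it is a module homomorphism because in $\OZ$ one has $\bn\bm_{\rho} = \frac{\bn\bm_{\rho}}{\wtga(\bn\bm_{\rho})}\cdot\wtga(\bn\bm_{\rho})$, where the first factor is a $G$-invariant monomial, hence constant on $Z$ with value $\ev{p}$ of itself. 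Since each $\bm_{\rho}$ is a unit on $\T$, its restriction is nonzero; the images thus form a system of weight vectors hitting every weight once, and a dimension count shows $\phi$ is an isomorphism.

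For (5), the existence of a torus fixed point is equivalent to $S(\Gamma)\cap S(\Gamma)\inv = \{\one\}$, and then $\ev{p}$ sends $\one$ to $1$ and every other element of $S(\Gamma)$ to $0$. Hence $\bn\ast\bm_{\rho} = \bn\bm_{\rho}$ when $\bn\bm_{\rho}\in\Gamma$ and $\bn\ast\bm_{\rho} = 0$ otherwise. I would compare this with $C(\Gamma) = \langle\Gamma\rangle/\langle B(\Gamma)\rangle$ through $\bm_{\rho}\mapsto[\bm_{\rho}]$; the one verification needed is that $\bn\bm_{\rho}\notin\Gamma$ implies $\bn\bm_{\rho}\in\langle B(\Gamma)\rangle$. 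Factoring $\bn = \bbf_1\cdots\bbf_k$ into variables and choosing the first $j$ for which $\bbf_1\cdots\bbf_j\bm_{\rho}$ leaves $\Gamma$, the monomial $\bbf_j\cdot(\bbf_1\cdots\bbf_{j-1}\bm_{\rho})$ lies in $B(\Gamma)$ and divides $\bn\bm_{\rho}$; condition (iii) of Definition~\ref{Def:G-prebrick} also shows no element of $\Gamma$ lies in $\langle B(\Gamma)\rangle$, so the images of $\Gamma$ are a basis and the map is an isomorphism.

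The main obstacle is (3). Any isomorphism $\psi\colon C(\Gamma)_p\to C(\Gamma)_q$ is $G$-equivariant, and since each weight space is one-dimensional it is forced to be diagonal, $\psi(\bm_{\rho}) = \lambda_{\rho}\bm_{\rho}$ with $\lambda_{\rho}\in\cx$. Because the module is generated by the actions of $x,y,z$, it suffices that $\psi$ intertwine these, which for each $\bbf\in\{x,y,z\}$ and $\bm_{\rho}$ reads $\lambda_{\rho''}\ev{p}(g) = \lambda_{\rho}\ev{q}(g)$, where $\rho'' = \wt(\bbf\bm_{\rho})$ and $g = \frac{\bbf\bm_{\rho}}{\wtga(\bbf\bm_{\rho})}$. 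When $\bbf\bm_{\rho}\in\Gamma$ we have $g = \one$, so this forces $\lambda_{\rho''} = \lambda_{\rho}$; when $\bbf\bm_{\rho}\in B(\Gamma)$ it relates $\ev{p}$ and $\ev{q}$ on a generator $g$ of $S(\Gamma)$ up to the ratio $\lambda_{\rho''}/\lambda_{\rho}$. The crux is that the edges of the first kind are precisely the steps of the fractional paths in condition (iv) of Definition~\ref{Def:G-prebrick} — a dividing step is such an edge traversed backwards — so connectedness of $\Gamma$ forces all $\lambda_{\rho}$ to coincide. Every ratio is then $1$, so $\ev{p}$ and $\ev{q}$ agree on all generators, hence on $S(\Gamma)$, giving $p = q$. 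Recognising that the connectivity hypothesis is exactly what rigidifies the scalars is where the argument really turns; everything else is bookkeeping.
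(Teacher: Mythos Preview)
Your argument is correct and considerably more detailed than the paper's own proof, which dispatches (i), (ii), (v) as immediate from the definition, asserts (iii) in one line as a consequence of the bijection between points of $U(\Gamma)$ and semigroup homomorphisms $S(\Gamma)\to\C$, and devotes its attention to (iv).

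Two genuine differences are worth noting. For (iv), the paper constructs the isomorphism in the opposite direction: it defines the surjection $\C[x,y,z]\to C(\Gamma)_p$, $f\mapsto f\ast\one$, and identifies its kernel with the ideal of $Z$; your map $C(\Gamma)_p\to\OZ$ sending $\bm_\rho$ to its restriction is essentially the inverse, and both approaches work. For (iii), your argument is in fact more complete than the paper's. The one-line appeal to the point--homomorphism bijection only says that $p\neq q$ gives $\ev{p}\neq\ev{q}$; it does not by itself preclude $C(\Gamma)_p\iso C(\Gamma)_q$, since a priori a $G$-equivariant isomorphism could rescale the weight basis and compensate. Your observation that connectedness (condition (iv) of Definition~\ref{Def:G-prebrick}) forces all the diagonal scalars $\lambda_\rho$ to coincide is exactly the missing rigidity, and it makes explicit why the prebrick axioms are needed here.
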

\begin{proof}
From the definition of $C(\Gamma)_{p}$, the assertions (i), (ii) and (v) follow immediately. The assertion (iii) follows from the fact that points on the affine toric variety $U(\Gamma)$ are in 1-to-1 correspondence with semigroup homomorphisms from $S(\Gamma)$ to $\C$. 

It remains to show (iv). Let $Z \subset \T=(\C\mul)^3$ be a free $G$-orbit and $p$ the corresponding point in $\Spec \C[M] \subset U(\Gamma)$. There is a surjective $G$-equivariant $\C[x,y,z]$-module homomorphism 
\[
\C[x,y,z] \rightarrow C(\Gamma)_{p} \quad \text{given by $f \mapsto f * \one$}.
\]
whose kernel is equal to the ideal of $Z$.
This proves (iv).
\end{proof}

\begin{Def}
A $G$-prebrick is said to be {\em $\theta$-stable} if $C(\Gamma)$ is $\theta$-stable.
\end{Def}
\subsubsection*{\bf Deformation space $D(\Gamma)$} 
We introduce deformation theory of $C(\Gamma)$ for a $\theta$-stable $G$-prebrick~$\Gamma$. We deform $C(\Gamma)$, keeping the same vector space structure, but perturbing the structure of $\C[x,y,z]$-module. Since we fix a $\C$-basis $\Gamma$ of $C(\Gamma)$, deforming $C(\Gamma)$ involves $3r$ parameters $\{x_{\rho}, y_{\rho}, z_{\rho} \st \rho \in G\dual \}$ with
\[
\begin{cases}
x * \bm_{\rho} = x_{\rho} \wtga(x \cdot \bm_{\rho}),\\
y * \bm_{\rho} = y_{\rho} \wtga(y \cdot \bm_{\rho}),\\
z * \bm_{\rho} = z_{\rho} \wtga(z \cdot \bm_{\rho}),
\end{cases}
\]
with the following commutation relations:
\begin{equation}\label{Eqtn:the conmmutative relations}
\begin{cases}
x_{\rho}y_{\wt(x \cdot \bm_{\rho})} - y_{\rho}x_{\wt(y \cdot \bm_{\rho})},\\
x_{\rho}z_{\wt(x \cdot \bm_{\rho})} - z_{\rho}x_{\wt(z \cdot \bm_{\rho})},\\
y_{\rho}z_{\wt(y \cdot \bm_{\rho})} - z_{\rho}y_{\wt(y \cdot \bm_{\rho})}.
\end{cases}
\end{equation}
Note that $\wtga(\bm)\in \Gamma$ is the base of the same weight as $\bm$.
Fixing a basis $\Gamma$ means that we set $\bbf_{\rho}=1$ if $\wtga(\bbf\cdot\bm_{\rho})=\bbf\cdot\bm_{\rho}$ for $\bbf \in \{x,y,z\}$.
Define a subset of the $3r$ parameters
\[
\Lambda(\Gamma) := \big\{\bbf_{\rho} \st \wtga(\bbf\cdot\bm_{\rho})=\bbf\cdot\bm_{\rho}, \ \bbf_{\rho} \in \{x_{\rho}, y_{\rho}, z_{\rho}\} \big\},
\]
i.e.\ $\Lambda(\Gamma)$ is the set of parameters fixed to be 1.
Define the affine scheme 
\begin{equation}\label{Eqtn:coordinate ring of D(Gamma)}
D(\Gamma):=\Spec \big(\C[x_{\rho}, y_{\rho}, z_{\rho} \st \rho \in G\dual]\big{/}I_{\Gamma}\big)
\end{equation}
where $I_{\Gamma}=\big\langle \text{the quadrics in \eqref{Eqtn:the conmmutative relations}, $\bbf_{\rho}-1 \st \bbf_{\rho} \in \Lambda(\Gamma)$}\big\rangle$.
By King\cite{K94}, the affine scheme $D(\Gamma)$ is an open set of $\mth$ containing the point corresponding to $C(\Gamma)$. More precisely, consider an affine open set $\widetilde{U_{\Gamma}}$ in $\rg$, which is defined by $\bbf_{\rho}$ to be nonzero for all $\bbf_{\rho} \in \Lambda(\Gamma)$. Note that $\widetilde{U_{\Gamma}}$ is $\GL(\delta)$-invariant and that $\widetilde{U_{\Gamma}}$ is in the $\theta$-stable locus. Since the quotient map $\rgssth \rightarrow \mth$ is a geometric quotient for generic $\theta$, 
from GIT\cite{GIT}, it follows that $\Spec \C[\widetilde{U_{\Gamma}}]^{\GL(\delta)}$
is an affine open subset of $\mth$.
On the other hand, setting $\bbf_{\rho} \in \Lambda(\Gamma)$ to be 1 for all $\bbf_{\rho} \in \Lambda(\Gamma)$ gives a slice of the ${\GL(\delta)}$-action. Thus $D(\Gamma)$ is isomorphic to $\Spec \C[\widetilde{U_{\Gamma}}]^{\GL(\delta)}$.
\begin{Rem}
The affine open subset $D(\Gamma)$ of the moduli space $\mth$ parametrises $G$-constellations whose basis is $\Gamma$.\erem

\begin{Prop}\label{Prop:open immersion}
For generic $\theta$, let $\Gamma$ be a $\theta$-stable $G$-brick and $\yth$ the birational component of~$\mth$. Then $C(\Gamma)_{p}$ is $\theta$-stable for every $p \in U(\Gamma)$. Furthermore, there exists an open immersion
\[
\begin{array}{ccc}
U(\Gamma)=\Spec \C[S(\Gamma)]  & \hookrightarrow& \yth \subset \sM_{\theta},
\end{array}
\]
which fits into the following commutative diagram:
\[
\begin{array}{ccc}
U(\Gamma) & \hookrightarrow & \yth \\[4pt]
 \hookdownarrow && \hookdownarrow \\[4pt]
D(\Gamma) & \hookrightarrow & \mth
\end{array}
\]
where the vertical morphisms are closed embeddings.
\end{Prop}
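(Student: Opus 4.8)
The plan is to establish the two assertions in turn: first the $\theta$-stability of $C(\Gamma)_p$ for every $p\in U(\Gamma)$, and then the closed embedding $U(\Gamma)\hookrightarrow D(\Gamma)$ together with the identification of its image with an open subset of $\yth$, which yields both the open immersion and the commutative square.

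For stability, the key point is that the combinatorics of $G$-submodules is controlled by $\Gamma$ alone and is, up to shrinking, independent of $p$. Since $\Gamma$ is a $\C$-basis of $C(\Gamma)_p$ (Lemma~\ref{Lem:C(Gamma)p}(ii)) consisting of distinct $G$-weight vectors, every $G$-submodule $\sG\subset C(\Gamma)_p$ is spanned by a subset $A\subset\Gamma$. I would then observe that whenever $\bbf\cdot\bm_\rho\in\Gamma$ for some $\bbf\in\{x,y,z\}$, the structure map \eqref{Eqtn:C(Gamma) at p} gives $\bbf\ast\bm_\rho=\bbf\cdot\bm_\rho$ with coefficient $\ev{p}(\one)=1$, \emph{independently of} $p$. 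Hence any $A$ closed under the $C(\Gamma)_p$-action automatically satisfies condition (ii) of Lemma~\ref{Lem:combinatorial description of submodule}, and therefore spans a submodule of the special fibre $C(\Gamma)$ as well; the remaining border relations (those with $\bbf\cdot\bm_\rho\in B(\Gamma)$) can only impose further constraints on $A$, never fewer. Thus the subsets of $\Gamma$ arising from submodules of $C(\Gamma)_p$ form a subcollection of those arising from $C(\Gamma)$. Since $\theta(\sG)=\sum_{\bm_\rho\in A}\theta(\rho)$ depends only on $A$, the inequality $\theta(\sG)>0$ valid for the $\theta$-stable $C(\Gamma)$ transfers verbatim to every $C(\Gamma)_p$, proving $\theta$-stability for all $p$.

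For the second part, I would first construct the left vertical closed embedding at the level of coordinate rings, sending each deformation parameter $\bbf_\rho$ to the character $\tfrac{\bbf\cdot\bm_\rho}{\wtga(\bbf\cdot\bm_\rho)}\in S(\Gamma)$. A direct check shows this respects $I_\Gamma$: a normalizing generator $\bbf_\rho-1$ with $\bbf_\rho\in\Lambda(\Gamma)$ maps to $0$ because the corresponding character is $\one$, and each commutation relation in \eqref{Eqtn:the conmmutative relations} maps to the difference of two equal characters of the form $\tfrac{\bbf\bg\cdot\bm_\rho}{\wtga(\bbf\bg\cdot\bm_\rho)}$, hence to $0$, since $\wtga$ depends only on weights. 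By Lemma~\ref{Lem:crucial finitely gen} the images $\tfrac{\bb}{\wtga(\bb)}$ for $\bb\in B(\Gamma)$ already generate $S(\Gamma)$, so the ring map is surjective and defines a closed embedding $U(\Gamma)\hookrightarrow D(\Gamma)$ realizing the family $p\mapsto[C(\Gamma)_p]$. Composing with King's open immersion $D(\Gamma)\hookrightarrow\mth$ and noting that $\yth\hookrightarrow\mth$ is a closed embedding (it is a component), the square commutes by construction. To finish, I would identify the image with an open subset of $\yth$: by Lemma~\ref{Lem:C(Gamma)p}(iv) the torus $\Spec\C[M]\subset U(\Gamma)$ maps into $T=(\C\mul)^3/G$, and since $U(\Gamma)$ is an irreducible toric variety containing this torus as a dense open while $\yth$ is the closure of the dense torus $T$, the whole image of $U(\Gamma)$ lies in $\yth$, indeed in the open set $D(\Gamma)\cap\yth$. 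Both $U(\Gamma)$ and $D(\Gamma)\cap\yth$ are reduced and irreducible, they share the dense torus $T$, and $U(\Gamma)$ is closed in $D(\Gamma)\cap\yth$; a closed irreducible subset containing a dense subset equals the ambient space, so $U(\Gamma)=D(\Gamma)\cap\yth$ as reduced schemes, exhibiting $U(\Gamma)$ as open in $\yth$.

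I expect the main obstacle to be the stability argument, specifically the verification that deforming the special fibre $C(\Gamma)$ to an arbitrary $C(\Gamma)_p$ can only \emph{destroy} submodules and never create new $G$-weight subsets, so that the positivity of $\theta$ is preserved uniformly in $p$; this is what makes the single hypothesis that $\Gamma$ is a $\theta$-stable $G$-brick propagate across the whole chart. The relation-checking and surjectivity in the construction of $U(\Gamma)\hookrightarrow D(\Gamma)$ are routine bookkeeping with $\wtga$ guaranteed by Lemma~\ref{Lem:crucial finitely gen}, and the concluding identification $U(\Gamma)=D(\Gamma)\cap\yth$ is a standard density-plus-reducedness argument.
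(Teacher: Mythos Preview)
Your proof is correct and follows essentially the same route as the paper. The paper's argument is terser: for stability it simply notes that any submodule $\sG\subset C(\Gamma)_p$ has a submodule $\sG'\subset C(\Gamma)$ with the same support, which is exactly what your detailed analysis via Lemma~\ref{Lem:combinatorial description of submodule} establishes; for the closed embedding it gives the same ring map without spelling out why $I_\Gamma$ lies in the kernel; and for the final identification it concludes $U(\Gamma)=\yth\cap D(\Gamma)$ from ``reduced and of the same dimension'' rather than your density argument, but the two are equivalent here since $T$ is dense in both.
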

\begin{proof}
Let us assume that the $G$-constellation $C(\Gamma)$ is $\theta$-stable.
Let $p$ be an arbitrary point in $U(\Gamma)$ and $\sG$ a submodule of $C(\Gamma)_{p}$. By the definition of $C(\Gamma)_{p}$, there is a submodule $\sG'$ of $C(\Gamma)$ whose support is the same as $\sG$. Since $C(\Gamma)$ is $\theta$-stable, $\theta(\sG)= \theta(\sG') >0$. Thus $C(\Gamma)_{p}$ is $\theta$-stable.

Since there is a $\C$-algebra epimorphism from $\C[D(\Gamma)]$ to $\C[S(\Gamma)]$ given by
\[
\bbf_{\rho} \mapsto \frac{\bbf\cdot \bm_{\rho}}{\wtga(\bbf\cdot \bm_{\rho})}
\]
for $\bbf_{\rho} \in \{x_{\rho}, y_{\rho}, z_{\rho}\}$,
it follows that $U(\Gamma)$ is a closed subscheme of $D(\Gamma)$. 

As Craw, Maclagan, and Thomas\cite{CMT} proved that the birational component $\yth$ is a unique irreducible component of $\mth$ containing the torus $T=(\C\mul)^3/G$, it follows that $Y_{\theta} \cap D(\Gamma)$ is a unique irreducible component of $D(\Gamma)$ containing the torus $T$. 

The morphism $U(\Gamma) \hookrightarrow D(\Gamma) \subset \mth$ induces an isomorphism between the torus $\Spec \C[M]$ and the torus $T$ of $\yth$ by Lemma~\ref{Lem:C(Gamma)p}~(iv).
Note that $U(\Gamma)$ is in the component $Y_{\theta} \cap D(\Gamma)$ because $U(\Gamma)$ is a closed subset of $D(\Gamma)$ containing $T$.
Since both $U(\Gamma)$ and $Y_{\theta} \cap D(\Gamma)$ are reduced and of the same dimension, $U(\Gamma)$ is equal to $Y_{\theta} \cap D(\Gamma)$. Thus there exists an open immersion from $U(\Gamma)$ to $\yth$.
\end{proof}
\subsection{$G$-bricks and the biratioanl component $\yth$} \label{subsec:G-bricks and the biratioanl component yth}
In this section, we present a 1-to-1 correspondence between the set of torus fixed points in $\yth$ and the set of $\theta$-stable $G$-bricks.

\begin{Prop}\label{Prop:1-to-1 corr between torus fixed points and G-bricks}
Let $G\subset \GL_3(\C)$ be the group of type $\frac{1}{r}(\alpha_1,\alpha_2,\alpha_3)$. For a generic parameter $\theta$, there is a 1-to-1 correspondence between the set of torus fixed points in the birational component $\yth$ of the moduli space $\mth$ and the set of $\theta$-stable $G$-bricks. 
\end{Prop}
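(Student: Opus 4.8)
The plan is to construct the correspondence explicitly in both directions and verify that the two constructions are mutually inverse. The key enabling fact is that, for generic $\theta$, the space $\mth$ is a fine moduli space (King~\cite{K94}), so a torus fixed point of $\yth$ carries the same information as an isomorphism class of a $\T$-invariant $\theta$-stable $G$-constellation lying in the birational component. Throughout I work with this dictionary between fixed points and $\T$-invariant constellations.

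First I would treat the map from $G$-bricks to fixed points. Let $\Gamma$ be a $\theta$-stable $G$-brick. By Proposition~\ref{Prop:open immersion} there is an open immersion $U(\Gamma)\hookrightarrow\yth$, and since $\Gamma$ is a $G$-brick the affine toric variety $U(\Gamma)=\Spec\C[S(\Gamma)]$ has a unique torus fixed point $p_\Gamma$ (equivalently $\sigma(\Gamma)$ is $3$-dimensional). Its image in $\yth$ is a torus fixed point, and by Lemma~\ref{Lem:C(Gamma)p}(v) the constellation it carries is $C(\Gamma)_{p_\Gamma}\iso C(\Gamma)$. This assignment $\Gamma\mapsto p_\Gamma$ is injective: the module $C(\Gamma)$ is $\T$-invariant and, after normalizing so that $\one$ is its monomial of trivial weight, its $\T$-weights are exactly the Laurent monomials in $\Gamma$; hence the isomorphism class of $C(\Gamma)$, and so the fixed point, recovers $\Gamma$.

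For surjectivity I would go the other way. Let $p$ be a torus fixed point of $\yth$ and $\sF$ the corresponding $\theta$-stable $G$-constellation. Its $\T$-invariance means $\sF$ admits a compatible $\T$-equivariant structure, so $\hh^{0}(\sF)\iso\C[G]$ splits into one-dimensional $\T$-weight spaces, one for each $\rho\in G\dual$, refining the isotypic decomposition. Recording the $\T$-weight $\bm_\rho\in\lau$ of each summand, and twisting by the element of $M=\ker(\wt)$ that is the weight of the $\rho=0$ summand so that $\one$ becomes the chosen monomial of trivial weight, yields a set $\Gamma=\{\bm_\rho\}$ with $\wt\colon\Gamma\to G\dual$ bijective. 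I then verify $\Gamma$ is a $G$-prebrick: axioms (i) and (ii) are immediate, whereas (iii) and (iv) follow from the fact that, $p$ lying in $\yth$, the constellation $\sF$ is a flat degeneration of structure sheaves $\OZ$ of free $G$-orbits and is therefore cyclic, generated by its weight-$0$ vector $\one$. Cyclicity shows every $\bm_\rho$ is reached from $\one$ by multiplying by $x,y,z$ (connectedness) and that the weight set is closed under the divisibility condition (iii). Since $\sF\iso C(\Gamma)$, $\theta$-stability of $\sF$ makes $\Gamma$ a $\theta$-stable prebrick, and the presence of the fixed point forces $\sigma(\Gamma)$ to be $3$-dimensional, so $\Gamma$ is a $G$-brick; by construction the forward map sends $\Gamma$ back to $p$.

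The main obstacle is this surjectivity step: checking that the raw $\T$-weight data of $\sF$ satisfies the combinatorial prebrick axioms (iii)--(iv) and that $\sF$ is genuinely recovered as $C(\Gamma)$. Both rest on establishing cyclicity of $\T$-invariant $\theta$-stable constellations \emph{in the birational component} --- that membership in $\yth$, as opposed to an arbitrary component of $\mth$ where non-birational torus-invariant $G$-clusters occur (cf.~\cite{CMTb}), guarantees generation by the single weight-$0$ vector $\one$. Once cyclicity is secured, the remaining verifications reduce to routine bookkeeping with $\T$-weights.
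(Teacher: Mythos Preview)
Your forward map and injectivity argument are fine and match the paper. The gap is in surjectivity, precisely at the point you yourself flag as the ``main obstacle'': cyclicity of $\T$-invariant $\theta$-stable constellations in $\yth$ is simply \emph{false}, and with it your derivation of axioms (iii) and (iv) collapses. Already for $G=\tfrac{1}{7}(1,3,4)$ the $G$-brick $\Gamma_1=\{1,y,y^2,z,\tfrac{z}{y},\tfrac{z^2}{y},\tfrac{z^2}{y^2}\}$ of Example~\ref{Eg:G graph 1/7(1,3,4) } is $\theta$-stable and lies in $\yth$ (Example~\ref{Eg:Calculating G-graphs in 1/7(1,3,4)} and Lemma~\ref{Lem:our stability}), yet in $C(\Gamma_1)$ any monomial $\bn\in\mon$ acts on $\one$ by $\bn*\one=0$ unless $\bn\in\Gamma_1$; so the submodule generated by $\one$ has basis $\{1,y,y^2,z\}$ and never contains $\tfrac{z}{y}$. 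Being a flat degeneration of cyclic sheaves $\OZ$ does not force the limit to be cyclic. This is exactly why the paper introduces $G$-prebricks with \emph{Laurent} monomials rather than Nakamura's $G$-graphs (Remark after Definition~\ref{Def:G-prebrick}), and why connectedness (iv) allows dividing as well as multiplying by $x,y,z$.

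The paper's proof avoids cyclicity entirely. Given a torus fixed point $p$, it chooses a one-parameter subgroup $\lambda\colon\C^\times\to T\subset\yth$ limiting to $p$, obtaining a flat $\T$-equivariant family $\sV$ over $\A^1$ with generic fibre $\OZ$. Flatness lets one pick a common monomial basis $\Gamma\subset\lau$ for \emph{all} fibres simultaneously (so $\sF=\sV_0=\langle\Gamma\rangle/N$ with $\Gamma$ a $\C$-basis). Axiom (iii) then follows from a submodule argument: if $\bn\cdot\bm_\rho\notin\Gamma$ then by $\T$-equivariance and the dimension count one has $\bn\cdot\bm_\rho\in N$; hence $\bn'\cdot\bn\cdot\bm_\rho\in\Gamma$ forces $\bn\cdot\bm_\rho\in\Gamma$, since $N$ is closed under multiplication by $\bn'$. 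Axiom (iv) is deduced not from cyclicity but from $\theta$-stability: a disconnected $\Gamma$ would make $C(\Gamma)$ decompose, contradicting stability for generic $\theta$. Your $\T$-weight extraction of $\Gamma$ is essentially the same as the paper's, but you need this module-theoretic argument in place of the cyclicity claim.
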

\begin{proof}\footnote{In \cite{Thesis}, there is another proof using the language of the McKay quiver representations. }
In Section~\ref{subsec:G-bricks and local charts of yth}, we have seen that if $\Gamma$ is a $\theta$-stable $G$-brick, then $C(\Gamma)$ is a torus invariant $G$-constellation corresponding to a torus fixed point in $\yth$. 
 
Let $p\in\yth$ be a torus fixed point and $\sF$ the corresponding torus invariant $G$-constellation. For a one parameter subgroup
\[
\lambda\colon\C\mul \rightarrow T \subset \yth \quad 
\]
with $\lim_{t\rightarrow 0} \lambda(t) = p$,
$\lambda$ induces a flat family $\sV$ of $G$-constellations over $\A_{\C}^1$. Note that $\sV$ has generic support; for every nonzero $t\in\A_{\C}^1$, the $G$-constellation $\sV_t$ over $t$ is isomorphic to $\OZ$ for a free $G$-orbit $Z$ in $\T =(\C\mul)^3$. There is a set $\Gamma=\{\bm_{\rho}\in \lau\st \rho \in G\dual\}$ satisfying:
\begin{enumerate}
\item[(1)] $\Gamma$ is a $\C$-basis of $\sV_t$ for every $t\in \A_{\C}^1$.
\item[(2)] $\one \in \Gamma$.
\item[(3)] $\sV_t$ is isomorphic to $\langle \Gamma \rangle / N_t$ for a submodule $N_t$ of $\langle \Gamma \rangle$, where $\langle \Gamma \rangle$ denotes the $\C[x,y,z]$-module generated by~$\Gamma$.
\end{enumerate}

We prove that $\Gamma$ is a $G$-prebrick and that $\sF\iso C(\Gamma)$. Note that $\sF$ can be written as $\langle \Gamma \rangle / N$ for a submodule $N$. For any $\bm_{\rho}\in \Gamma$, since $\bm_{\rho}$ is a base, $\bm_{\rho}$ is not in $N$. 
Moreover if $\bn \cdot \bm_{\rho} \not \in \Gamma$ for $\bn\in\mon$, $\bm_{\rho} \in\Gamma$, then $\bn \cdot \bm_{\rho} \in  N$ because the dimension of $\hh^0(\sF)$ is $r=\lvert \Gamma \rvert$. This proves that $N=\langle B(\Gamma) \rangle$, where $B(\Gamma)$ is the Border bases in Lemma~\ref{Lem:crucial finitely gen}. From this, it follows that $\Gamma$ satisfies the conditions (i),(ii),(iii) in Definition~\ref{Def:G-prebrick}. As $\sF$ is $\theta$-stable for generic $\theta$, the connectedness condition (iv) follows.

To see that the $G$-prebrick $\Gamma$ is a $G$-brick, note that the point $p\in \yth$ corresponds to the isomorphism class of $C(\Gamma)$ so $p\in\D(\Gamma)$. Thus $p$ is in $U(\Gamma)=\yth \cap D(\Gamma)$.
\end{proof}
\begin{Cor}
Let $\Gamma$ be a $G$-prebrick. Then $C(\Gamma)$ lies in the birational component $\yth$ if and only if $\Gamma$ is a $G$-brick.
\end{Cor}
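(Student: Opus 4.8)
The plan is to read this as a statement about a fixed generic parameter $\theta$ and a $\theta$-stable $G$-prebrick $\Gamma$, since the phrase ``$C(\Gamma)$ lies in $\yth\subset\mth$'' already presupposes that $[C(\Gamma)]$ is a point of $\mth$, i.e. that $C(\Gamma)$ is $\theta$-stable. With this reading the corollary is essentially a packaging of Proposition~\ref{Prop:1-to-1 corr between torus fixed points and G-bricks}, once one knows that a torus-invariant $G$-constellation remembers the monomial basis that produced it.

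The \emph{if} direction is immediate. If $\Gamma$ is a $\theta$-stable $G$-brick then, as recorded at the start of the proof of Proposition~\ref{Prop:1-to-1 corr between torus fixed points and G-bricks} (equivalently, via the open immersion $U(\Gamma)=\yth\cap D(\Gamma)\hookrightarrow\yth$ of Proposition~\ref{Prop:open immersion} together with Lemma~\ref{Lem:C(Gamma)p}(v)), the constellation $C(\Gamma)$ is the $G$-constellation sitting at the torus fixed point of $U(\Gamma)$. Hence $[C(\Gamma)]\in U(\Gamma)\subset\yth$.

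For the \emph{only if} direction, note first that $C(\Gamma)=\langle\Gamma\rangle/\langle B(\Gamma)\rangle$ is torus invariant, so $[C(\Gamma)]$ is a fixed point of the $\T$-action on $\mth$ (Remark~\ref{Rem:Torus action on mth}). If it lies in $\yth$, then Proposition~\ref{Prop:1-to-1 corr between torus fixed points and G-bricks} supplies a $\theta$-stable $G$-brick $\Gamma'$ whose associated fixed point is $[C(\Gamma)]$, so that $C(\Gamma)\iso C(\Gamma')$ as $G$-constellations. It then suffices to promote this isomorphism to the equality $\Gamma=\Gamma'$, for then $\Gamma$ inherits the $G$-brick property of $\Gamma'$.

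The rigidity step $C(\Gamma)\iso C(\Gamma')\implies\Gamma=\Gamma'$ is where I expect the real work to lie. The plan is as follows. A $G$-equivariant isomorphism $\phi\colon C(\Gamma)\to C(\Gamma')$ preserves $G\dual$-weights, hence carries the one-dimensional $\rho$-weight space $\C\,\bm_\rho$ onto $\C\,\bm'_\rho$; write $\phi(\bm_\rho)=c_\rho\,\bm'_\rho$ with $c_\rho\in\C\mul$. Comparing the two sides of $\phi(\bbf\cdot\bm_\rho)=\bbf\cdot\phi(\bm_\rho)$ for $\bbf\in\{x,y,z\}$ and using $c_\rho\neq0$ shows that the action of $\bbf$ on the $\rho$-weight vector is nonzero in $C(\Gamma)$ precisely when it is nonzero in $C(\Gamma')$, that is $\bbf\cdot\bm_\rho\in\Gamma\iff\bbf\cdot\bm'_\rho\in\Gamma'$. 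Thus $\Gamma$ and $\Gamma'$ have the same set of nonzero arrows. Since both contain $\one$ and are connected (Definition~\ref{Def:G-prebrick}(i),(iv)), every $\bm_\rho$ can be reached from $\one$ by multiplying or dividing by $x,y,z$ along these common arrows; an induction on the length of such a path, starting from the common base point $\one\in\Gamma\cap\Gamma'$, yields $\bm_\rho=\bm'_\rho$ for all $\rho$, and hence $\Gamma=\Gamma'$. Feeding this back, $\Gamma=\Gamma'$ is a $G$-brick, which completes the proof.
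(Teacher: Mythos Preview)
Your argument is correct, but the ``only if'' direction is more roundabout than what the paper has in mind. The corollary is meant to be read off directly from the last paragraph of the proof of Proposition~\ref{Prop:1-to-1 corr between torus fixed points and G-bricks}: for any $\theta$-stable $G$-prebrick $\Gamma$, the point $[C(\Gamma)]$ lies in $D(\Gamma)$ tautologically (since $\Gamma$ is a basis of $C(\Gamma)$), so if $[C(\Gamma)]\in\yth$ then $[C(\Gamma)]\in\yth\cap D(\Gamma)=U(\Gamma)$, where the equality comes from the proof of Proposition~\ref{Prop:open immersion} (which nowhere uses that $\Gamma$ is a $G$-brick, only that it is $\theta$-stable). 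As $C(\Gamma)$ is torus invariant, this exhibits a torus fixed point in $U(\Gamma)$, so $\Gamma$ is a $G$-brick by definition. No auxiliary $\Gamma'$ and no rigidity step are required.

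Your rigidity lemma $C(\Gamma)\iso C(\Gamma')\Rightarrow\Gamma=\Gamma'$ is nonetheless correct and worth recording: it is exactly what underlies the \emph{injectivity} half of the 1-to-1 correspondence in Proposition~\ref{Prop:1-to-1 corr between torus fixed points and G-bricks}, which the paper's proof does not spell out. So your detour actually fills a small gap elsewhere, even though for this corollary the direct route via $U(\Gamma)=\yth\cap D(\Gamma)$ is shorter.
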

\begin{Thm}\label{Thm:Y theta with G-bricks}
Let $G\subset\GL_3(\C)$ be a finite diagonal group and $\theta$ a generic GIT parameter for $G$-constellations. Assume that $\gr$ is the set of all $\theta$-stable $G$-bricks.
\begin{enumerate}
\item The birational component $\yth$ of $\mth$ is isomorphic to the not-necessarily-normal toric variety  $\bigcup_{\Gamma \in \gr} U(\Gamma)$. 
\item The normalization of $\yth$ is isomorphic to the normal toric variety whose toric fan consists of the 3-dimensional cones $\sigma(\Gamma)$ for $\Gamma \in \gr$ and their faces.
\end{enumerate} 
\end{Thm}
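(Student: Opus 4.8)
The plan is to prove part (i) by a covering argument driven by the torus fixed points, and then to deduce part (ii) by passing to normalizations one chart at a time. First I would record that for each $\theta$-stable $G$-brick $\Gamma \in \gr$, Proposition~\ref{Prop:open immersion} supplies an open immersion $U(\Gamma) \hookrightarrow \yth$ with image $\yth \cap D(\Gamma)$, and that all of these open subsets contain the common dense torus $T = (\C\mul)^3/G = \Spec \C[M]$. Since $\yth$ is separated and $T$ is dense, the torus-equivariant open immersions restrict to the identity on $T$ and therefore agree on overlaps, so they glue to give an open immersion $\bigcup_{\Gamma \in \gr} U(\Gamma) \hookrightarrow \yth$. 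It then remains to show that this map is surjective, i.e.\ that the $U(\Gamma)$ cover $\yth$.

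For surjectivity I would argue that the complement $W := \yth \setminus \bigcup_{\Gamma \in \gr} U(\Gamma)$ is empty. Each $U(\Gamma)$ is torus-invariant, so $W$ is a closed torus-invariant subset of $\yth$. The key geometric input is that, by Theorem~\ref{Thm:CMT}, $\yth$ is projective (hence proper) over $\C^3/G = U_{\sigma_{+}}$, and $\sigma_{+}=\Cone(e_1,e_2,e_3)$ is full-dimensional; consequently the fan of the normalization of $\yth$ subdivides $\sigma_{+}$, so every maximal cone is $3$-dimensional and every torus orbit closure contains a torus fixed point. Thus if $W$ were nonempty it would contain a torus fixed point $p$. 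By the $1$-to-$1$ correspondence of Proposition~\ref{Prop:1-to-1 corr between torus fixed points and G-bricks}, $p$ corresponds to a $\theta$-stable $G$-brick $\Gamma$, and $p \in U(\Gamma)$ since $p \in \yth \cap D(\Gamma)$; this contradicts $p \in W$. Hence $W = \varnothing$ and $\yth = \bigcup_{\Gamma \in \gr} U(\Gamma)$, which proves (i).

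For part (ii) I would pass to normalizations. Normalization commutes with restriction to the open charts, and for a $G$-brick $\Gamma$ the normalization of $U(\Gamma) = \Spec \C[S(\Gamma)]$ is $U^{\nu}(\Gamma) = \Spec \C[\sigma(\Gamma)\dual \cap M]$, the affine toric variety of the cone $\sigma(\Gamma)$, which is $3$-dimensional precisely because $\Gamma$ is a brick. Hence the normalization of $\yth$ is covered by the $U^{\nu}(\Gamma)$ for $\Gamma \in \gr$. Since the normalization of a separated variety is separated, these charts glue into a separated normal toric variety, which forces the collection $\{\sigma(\Gamma) : \Gamma \in \gr\}$ together with all their faces to be a genuine fan, the pairwise intersections $\sigma(\Gamma) \cap \sigma(\Gamma')$ being common faces. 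Therefore the normalization of $\yth$ is the toric variety of this fan, as claimed.

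Finally, the step I expect to be the main obstacle is the covering argument in (i): one must be sure that the full-dimensionality of $\sigma_{+}$ together with properness of $\yth \to \C^3/G$ genuinely guarantees that every maximal torus-invariant chart carries a torus fixed point, so that the fixed-point-to-brick dictionary of Proposition~\ref{Prop:1-to-1 corr between torus fixed points and G-bricks} exhausts all the charts. Ruling out a ``dangling'' lower-dimensional maximal cone in the subdivision of $\sigma_{+}$ — equivalently, establishing that nonempty closed torus-invariant subsets are always detected by fixed points — is exactly where the geometric hypotheses are used; the possible non-normality of $\yth$ is harmless here, since the covering argument only requires each $U(\Gamma)$ to be an open neighbourhood of the relevant fixed point $p$.
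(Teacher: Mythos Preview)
Your argument is correct and uses the same two inputs as the paper (Proposition~\ref{Prop:open immersion} and Proposition~\ref{Prop:1-to-1 corr between torus fixed points and G-bricks}), but the organization is reversed. The paper first proves (ii): the open immersion $\psi\colon Y=\bigcup_{\Gamma}U(\Gamma)\hookrightarrow \yth$ induces an open embedding $\psi^{\nu}\colon Y^{\nu}\hookrightarrow \yth^{\nu}$ of normal toric varieties, and since by Proposition~\ref{Prop:1-to-1 corr between torus fixed points and G-bricks} both sides have the same torus fixed points, the subfan must equal the whole fan and $\psi^{\nu}$ is an isomorphism. Only then does the paper deduce (i), by a short contradiction: a missing torus orbit $O\subset \yth\setminus\psi(Y)$ has a preimage orbit in $\yth^{\nu}\cong Y^{\nu}$, which under the normalization $Y^{\nu}\to Y$ lands in $\psi(Y)$.

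Your route instead attacks (i) directly by arguing that a nonempty closed torus-invariant $W\subset \yth$ must contain a fixed point, and then obtains (ii) by normalizing the open cover. The step you correctly flagged as delicate is essentially the same normalization trick the paper uses: the statement ``every orbit closure contains a fixed point'' is established on $\yth^{\nu}$ (whose fan has support $\sigma_{+}$ by properness), and then transferred to $\yth$ via surjectivity and equivariance of the normalization map. Making that transfer explicit would remove the only soft spot in your write-up. Both arguments are equally short; the paper's ordering has the minor advantage that the fixed-point counting argument for $\psi^{\nu}$ avoids having to reason about orbit closures in the non-normal variety $\yth$ at all.
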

\begin{proof}
Let $G$ be the group of type $\frac{1}{r}(\alpha_1,\alpha_2,\alpha_3)$. Consider the lattice $L = \Z^3 + \Z\cdot \frac{1}{r}(\alpha_1,\alpha_2,\alpha_3)$.

Let $\yth$ be the birational component of the moduli space of $\theta$-stable $G$-constellations and $\yth^{\nu}$ the normalization of $\yth$. Let $Y$ denote the not-necessarily-normal toric variety  $\bigcup_{\Gamma \in \gr} U(\Gamma)$. Define the fan $\Sigma$ in $L_{\R}$ whose maximal cones are $\sigma(\Gamma)$ for $\Gamma \in \gr$. Note that the corresponding toric variety $Y^{\nu}:=X_{\Sigma}$ is the normalization of $Y$.
%

By Proposition~\ref{Prop:open immersion}, there is an open immersion $\psi \colon Y \rightarrow \yth$.
From Proposition~\ref{Prop:1-to-1 corr between torus fixed points and G-bricks}, it follows that the image $\psi(Y)$ contains all torus fixed points of $\yth$.
The induced morphism ${\psi^{\nu}}\colon Y^{\nu}\rightarrow \yth^{\nu}$ is an open embedding of normal toric varieties with the same number of torus fixed points. Thus the morphism ${\psi^{\nu}}$ should be an isomorphism. This proves (ii). 

To show (i), suppose that $\yth\setminus\psi(Y)$ is nonempty so it contains a torus orbit $O$ of dimension $d \geq 1$. Since the normalization morphism is torus equivariant and surjective, there exists a torus orbit $O'$ in $Y^{\nu}\iso \yth^{\nu}$ of dimension $d$ which is mapped to the torus orbit $O$. At the same time, from the fact that $Y^{\nu}$ is the normalization of $Y$ and that the normalization morphism is finite, it follows that the image of $O'$ is a torus orbit of dimension $d$, so the image is $O$. Thus $O$ is in $\psi(Y)$, which is a contradiction.
\end{proof}
\begin{Cor}\label{Cor:When Y theta is normal}
With the notation as in Theorem~\ref{Thm:Y theta with G-bricks}, $\yth$ is a normal toric variety if and only if $S(\Gamma)=\sigma(\Gamma)\dual \cap M$ for all $\Gamma \in \gr$.
\end{Cor}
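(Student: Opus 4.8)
The plan is to reduce the statement to the standard toric dictionary between saturated semigroups and normal affine toric varieties, applied chart by chart. Recall from the preceding discussion that for each $\Gamma \in \gr$ we have $U(\Gamma) = \Spec \C[S(\Gamma)]$, that $\sigma(\Gamma) = S(\Gamma)\dual$, and that $U^{\nu}(\Gamma) = \Spec \C[\sigma(\Gamma)\dual \cap M]$ is the normalization of $U(\Gamma)$. Since the torus of $U(\Gamma)$ is $\Spec \C[M]$, the semigroup $S(\Gamma)$ generates $M$ as a group, so $\sigma(\Gamma)\dual = \Cone(S(\Gamma))$ (a polyhedral, hence closed, cone because $S(\Gamma)$ is finitely generated by Lemma~\ref{Lem:crucial finitely gen}), and $\sigma(\Gamma)\dual \cap M = \Cone(S(\Gamma)) \cap M$ is precisely the saturation of $S(\Gamma)$. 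Thus the first thing to record is that $U(\Gamma)$ is normal if and only if $S(\Gamma)$ is saturated, i.e.\ if and only if the inclusion $S(\Gamma) \subseteq \sigma(\Gamma)\dual \cap M$ is an equality.

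Next I would use that normality is a local property. By part (i) of Theorem~\ref{Thm:Y theta with G-bricks}, $\yth \iso \bigcup_{\Gamma \in \gr} U(\Gamma)$, so the $U(\Gamma)$ are identified with open subschemes of $\yth$ covering it. For the forward direction, if $\yth$ is normal then each $U(\Gamma)$, being an open subscheme of a normal scheme, is normal, whence $S(\Gamma) = \sigma(\Gamma)\dual \cap M$ by the first paragraph. Conversely, if $S(\Gamma) = \sigma(\Gamma)\dual \cap M$ for every $\Gamma \in \gr$, then $\yth$ is covered by the normal affine opens $U(\Gamma) = U^{\nu}(\Gamma)$, and a scheme covered by normal opens is normal. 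This already establishes both implications.

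The same conclusion can be phrased through the normalization morphism, which I would mention as a sanity check: by part (ii) of Theorem~\ref{Thm:Y theta with G-bricks} the normalization $\yth^{\nu} \iso X_{\Sigma} \iso \bigcup_{\Gamma \in \gr} U^{\nu}(\Gamma)$, and the normalization morphism $\nu \colon \yth^{\nu} \to \yth$ is toric and compatible with the two covers indexed by the common set $\gr$, restricting on each chart to the affine normalization $U^{\nu}(\Gamma) \to U(\Gamma)$; since a morphism is an isomorphism if and only if it is one on each member of an open cover, $\yth$ is normal $\iff$ $\nu$ is an isomorphism $\iff$ each $U^{\nu}(\Gamma) \to U(\Gamma)$ is an isomorphism $\iff$ $S(\Gamma) = \sigma(\Gamma)\dual \cap M$ for all $\Gamma$. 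I do not expect a genuine obstacle here: the entire content is the toric characterization of normality via saturated semigroups, and the only care needed is the bookkeeping that the open covers of $\yth$ and $\yth^{\nu}$ match under normalization, which is immediate from Theorem~\ref{Thm:Y theta with G-bricks} since both are indexed by $\gr$ with $\sigma(\Gamma) = S(\Gamma)\dual$.
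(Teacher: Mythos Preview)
Your argument is correct and is precisely the standard toric argument one expects here. The paper states this corollary without proof, treating it as an immediate consequence of Theorem~\ref{Thm:Y theta with G-bricks} together with the familiar dictionary that an affine toric variety $\Spec \C[S]$ is normal if and only if $S$ is saturated; your write-up simply makes this explicit, and the optional sanity check via the normalization morphism matches part~(ii) of the theorem exactly.
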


\section{Weighted blowups and economic resolutions}\label{Sec:Wt Blups and Econ Resolns}
Let $G \subset \GL_3(\C)$ be the finite subgroup of type $\frac{1}{r}(1,a,r-a)$ with $r$ coprime to $a$, i.e.\ 
\[
G=\langle\diag(\epsilon,\epsilon^a,\epsilon^{r-a}) \st \epsilon^r=1\rangle.
\]
The quotient $X=\C^3/G$ has terminal singularities and has no crepant resolution. However there exist a special kind of toric resolutions, which can be obtained by a sequence of weighted blowups. In this section, we review the notion of toric weighted blowups and define round down functions which are used for finding admissible $G$-bricks.
\subsection{Terminal quotient singularities in dimension 3}\label{subsec:term quot sings in dim 3}
In this section, we collect various facts from birational geometry. Most of these are taken from \cite{R87}.

\begin{Def}
Let $X$ be a normal quasiprojective variety and $\KX$ the canonical divisor on $X$. We say that $X$ has {\em terminal singularities} (resp.\ {\em canonical singularities}) if it satisfies the following conditions:
\begin{enumerate}
\item there is a positive integer $r$ such that $r\KX$ is a Cartier divisor.
\item if $\varphi \colon Y \rightarrow X$ is a resolution with $E_i$ prime exceptional divisors such that
\[
r\KY = \varphi\pull (r\KX) + r \sum a_i E_i,
\]
then $a_i >0\text{ (resp.\ $\geq 0$)}$ for all $i$.
\end{enumerate}
\end{Def}
In the definition above, $a_i$ is called the {\em discrepancy} of $E_i$. A {\em crepant resolution} $\varphi$ of $X$ is a resolution with all discrepancies zero.
\subsubsection*{\bf Birational geometry of toric varieties}
Let $L$ be a lattice of rank $n$ and $M$ the dual lattice of $L$.
Let $\sigma$ be a cone in $L\otimes_{\Z}\R$. Fix a primitive element $v \in L \cap \sigma$. The {\em barycentric subdivision} of $\sigma$ at $v$ is the minimal fan containing all cones $\Cone(\tau,v)$ where $\tau$ varies over all subcones of $\sigma$ with $v \not \in \tau$.
\begin{Prop}[see e.g.\ \cite{R87}] \label{Prop:calculation of discrepancy}Let $\Sigma$ be the barycentric subdivision of an $n$-dimensional cone $\sigma$ at $v$. Let $X:=U_{\sigma}$ be the affine toric variety corresponding to $\sigma$ and $Y$ the toric variety corresponding to the fan $\Sigma$.
\begin{enumerate}
\item The barycentric subdivision induces a projective toric morphism
\[
\varphi\colon Y \rightarrow X.
\]
\item The set of 1-dimensional cones of $\Sigma$ consists of 1-dimensional cones of $\sigma$ and $\Cone(v)$.
\item The torus invariant prime divisor $D_v$ corresponding to the 1-dimensional cone $\Cone(v)$ is a $\Q$-Cartier divisor on $Y$.
\end{enumerate}
Furthermore if $v$ is an interior lattice point in $\sigma$, then
\[
\KY = \varphi\pull(\KX) + (\langle x_1x_2\cdots x_n,v \rangle -1)D_v, 
\]
i.e.\ the discrepancy of the exceptional divisor $D_v$ is $\langle x_1x_2\ldots x_n,v \rangle -1$.
\end{Prop}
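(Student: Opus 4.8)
The plan is to verify the three structural claims (i)--(iii) by elementary toric geometry and then to deduce the discrepancy formula by comparing canonical divisors ray by ray. Throughout I would identify a torus-invariant Weil divisor with its coefficients along rays, and use the standard identities $K = -\sum_{\rho} D_{\rho}$ (sum over all rays) and $\dv(\chi^{m}) = \sum_{\rho}\langle m, u_{\rho}\rangle D_{\rho}$, where $u_{\rho}$ denotes the primitive generator of the ray $\rho$. For (i), since $\Sigma$ refines the fan consisting of $\sigma$ and its faces and both live in $L$, the identity on $L$ induces a toric morphism $\varphi\colon Y\to X$; as the two fans have the same support $\sigma$, the morphism is proper, and it is birational because $\Sigma$ and $\sigma$ share the dense torus orbit. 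Projectivity I would obtain by exhibiting $-D_v$ as a $\varphi$-ample class: its support function is $0$ on every ray of $\sigma$ and bends strictly at $v$, hence is strictly convex relative to $\varphi$ (equivalently, the star subdivision is the toric weighted blowup already named in Section~\ref{Sec:Wt Blups and Econ Resolns}, which is projective over $X$ by construction).

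Claims (ii) and (iii) are then read off from the combinatorics of the subdivision. Every maximal cone of $\Sigma$ has the form $\Cone(F,v)$ for $F$ a facet of $\sigma$, and its rays are the rays of $F$ together with $\Cone(v)$; taking the union over all facets shows that the rays of $\Sigma$ are exactly the rays of $\sigma$ together with the single new ray $\Cone(v)$, which is generated by $v$ since $v$ is primitive. For (iii) it suffices to produce, on each maximal cone $\Cone(F,v)$, a functional $m_F\in M_{\Q}$ cutting out $D_v$ locally, i.e.\ $\langle m_F, u\rangle = 0$ for every ray generator $u$ of $F$ and $\langle m_F, v\rangle = -1$. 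I would take $m_F$ to be the rational functional defining the supporting hyperplane of $F$, which vanishes on the whole span of $F$; since $v$ is interior to $\sigma$ it is not on $\mathrm{span}(F)$, so $\langle m_F, v\rangle \ne 0$ and we may rescale to $-1$. The crucial point is that no simpliciality of $F$ is needed, because $D_v$ has coefficient $0$ along every ray of $F$ and the single facet functional annihilates all of them at once; hence $D_v$ is $\Q$-Cartier.

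For the discrepancy I would compute $K_Y - \varphi\pull(K_X)$ on rays. Because the ray generators of $\sigma$ lie on the hyperplane $\{x_1+\cdots+x_n = 1\}$ (for $\sigma=\sigma_{+}$ these are $e_1,\dots,e_n$), the monomial $x_1\cdots x_n$, viewed as the functional $(1,\dots,1)\in M$, satisfies $\langle x_1\cdots x_n, u_i\rangle = 1$ on every ray generator $u_i$ of $\sigma$, so that $\KX = \dv(\chi^{-(1,\dots,1)})$ is principal on the affine chart $X$, in particular $\Q$-Cartier. Pulling back to $Y$ and applying the divisor-of-a-character formula on the rays of $\Sigma$ gives
\[
\varphi\pull(\KX) = -\sum_{i} D_{u_i} - \langle x_1\cdots x_n,\, v\rangle\, D_v,
\]
whereas $\KY = -\sum_i D_{u_i} - D_v$. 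Subtracting yields $\KY = \varphi\pull(\KX) + (\langle x_1\cdots x_n, v\rangle - 1)D_v$, so the discrepancy of $D_v$ is $\langle x_1\cdots x_n, v\rangle - 1$, as asserted.

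The routine steps are (ii) and the final ray-by-ray comparison. The two places needing care are the projectivity in (i), where one must actually produce the relatively ample class rather than merely properness, and the $\Q$-Cartier statement (iii) for possibly non-simplicial facets $F$, which is precisely where exploiting the facet-defining functional (instead of solving a linear system over all rays simultaneously) does the work. I expect (iii) to be the main conceptual obstacle.
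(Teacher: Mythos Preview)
The paper does not prove this proposition: it is quoted with a reference to \cite{R87} and used as input. So there is no in-paper argument to compare against, and your write-up is exactly the standard toric computation one finds in that reference.

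Two small points worth tightening. First, in the discrepancy step you write ``the functional $(1,\dots,1)\in M$'', but in the paper's lattice $L=\Z^3+\Z\cdot\frac{1}{r}(1,a,r-a)$ one has $\langle(1,1,1),\frac{1}{r}(1,a,r-a)\rangle=\frac{r+1}{r}\notin\Z$, so $(1,1,1)\in\lau\setminus M$. Your computation survives unchanged if you phrase it as comparing support functions in $M_{\Q}$ (equivalently, $K_X$ is $\Q$-Cartier on the affine $X$ rather than principal), which is all the discrepancy formula needs. Second, your proof of (iii) invokes the hypothesis that $v$ is interior to $\sigma$, whereas the statement places that hypothesis only on the ``Furthermore'' clause. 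This is harmless for the paper's applications (every $v_i$ used is interior), but if you want (iii) in the stated generality you should note that when $v$ lies on a face, the maximal cones $\Cone(F,v)$ still carry facet functionals vanishing on $F$ and nonzero on $v$ (since $v\notin F$ by construction of the star subdivision), so the same rescaling works. Otherwise your argument is complete.
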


\begin{Eg}\label{Eg:discrepancy of 1/r(1,a,r-a)}
Define the lattice $L=\Z^3 +\Z\cdot \tfrac{1}{r}(1,a,r-a)$ with $r$ coprime to $a$ and $M=\Homz(L,\Z)$ the dual lattice. 
Let $\{e_1,e_2,e_3\}$ be the standard basis of $\Z^3$ and $\sigma_{+}$ the cone generated by $e_1,e_2,e_3$. 
As in Section~\ref{subsec:G-bricks and local charts of yth}, the toric variety $X:=U_{\sigma_+}$ is the quotient variety $\C^3/G$ where $G$ is the group of type $\tfrac{1}{r}(1,a,r-a)$.

Set $v_i:=\frac{1}{r}(i,\overline{ai},\overline{r-ai}) \in L$ for each $1 \leq i < r-1$ where $\bar{\quad}$ denotes the residue modulo $r$. 
Let $E_i$ be the torus invariant prime divisor corresponding to $v_i$. From Proposition~\ref{Prop:calculation of discrepancy}, the discrepancy of $E_i$ is 
\[
\frac{i}{r}+\frac{\overline{ai}}{r}+\frac{r-\overline{ai}}{r}-1=\frac{i}{r}>0.
\]
This shows that $X$ has only terminal singularities.\eeg
We have seen that the quotient singularity $X= \C^3/G$ has terminal singularities if $G$ is the group of type $\frac{1}{r}(1,a,r-a)$ with $r$ coprime to~$a$. Conversely, these groups are essentially all the cases, by the following.
\begin{Thm}[Morrison and Stevens\cite{MS}]
A 3-fold cyclic quotient singularity $X= \C^3/G$ has terminal singularities if and only if the group $G\subset \GL_3(\C)$ is of type $\frac{1}{r}(1,a,r-a)$ with $r$ coprime to $a$.
\end{Thm}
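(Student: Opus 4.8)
The strategy is to read terminality off the toric picture $X=U_{\sigma_{+}}=\C^3/G$, reduce it to the \emph{age} (Reid--Tai) criterion, and then settle the two directions separately: the ``if'' direction is immediate from Example~\ref{Eg:discrepancy of 1/r(1,a,r-a)}, while the ``only if'' direction becomes a delicate lattice-point statement which is the real content.

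First I would carry out two reductions. Write $G$ of type $\tfrac1r(a_1,a_2,a_3)$, set $L=\Z^3+\Z\cdot\tfrac1r(a_1,a_2,a_3)$, and let $v_k=\tfrac1r(\overline{ka_1},\overline{ka_2},\overline{ka_3})$ denote the $r$ coset representatives of $L/\Z^3$ in $[0,1)^3$ (residues in $[0,r)$). A terminal singularity admits no exceptional divisor of discrepancy $\le 0$, and I first use this to force $\gcd(a_i,r)=1$ for each $i$: if $\gcd(a_1,r)=d>1$ and $k_0=r/d$, then $v_{k_0}$ has vanishing first coordinate yet is not in $\Z e_2+\Z e_3$ (otherwise $g^{k_0}=1$ with $0<k_0<r$), so the facet $\Cone(e_2,e_3)$ is a singular two-dimensional cone and hence carries a lattice point in its open triangle; that point lies in the open simplex $\Delta^{\circ}=\{x_i\ge 0,\ x_1+x_2+x_3<1\}$ and, by Proposition~\ref{Prop:calculation of discrepancy}, yields a divisor of discrepancy $<0$, a contradiction. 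Granting $\gcd(a_1,r)=1$, I pick $t$ with $t a_1\equiv 1\pmod r$ and replace the generator $g$ by $g^{t}$; this rescales the weights to $(1,b,c)$ with $\gcd(b,r)=\gcd(c,r)=1$ without changing $G$ or $X$, so I may assume the type is $\tfrac1r(1,b,c)$.

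Next I would translate terminality into the age criterion via Proposition~\ref{Prop:calculation of discrepancy}. Iterating its barycentric subdivisions to a smooth refinement of $\sigma_{+}$, the discrepancy of the divisor extracted at an interior point $v\in L\cap\sigma_{+}$ equals $\langle x_1x_2x_3,v\rangle-1$, i.e.\ the coordinate sum of $v$ minus $1$; since every such $v$ is a representative $v_k$ plus a non-negative integral combination of $e_1,e_2,e_3$, the minimal discrepancies occur at the $v_k$, and
\[
X \text{ is terminal} \iff k+\overline{kb}+\overline{kc}>r \quad\text{for all } k=1,\dots,r-1.
\]
The implication ``$\Leftarrow$'' is exactly Example~\ref{Eg:discrepancy of 1/r(1,a,r-a)}: for the type $\tfrac1r(1,a,r-a)$ one has $\overline{ka}+\overline{k(r-a)}=r$ whenever $1\le k<r$ (as $ka\not\equiv 0$), so the left-hand sum equals $k+r>r$ and every discrepancy is $\tfrac{k}{r}>0$.

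It remains to prove the converse, and this is the crux. In the real torus $(\R/\Z)^3$ the classes of $v_0,\dots,v_{r-1}$ form the cyclic subgroup generated by $w=\tfrac1r(1,b,c)$, and inversion acts by $v_k\mapsto v_{r-k}=(1,1,1)-v_k$ (legitimate because all coordinates of a nonzero $v_k$ lie in $(0,1)$). The coordinate sums of $v_k$ and $v_{r-k}$ add to $3$, so the terminal inequality forces every nonzero $v_k$ into the open slab $1<x_1+x_2+x_3<2$; equivalently, no nonzero representative meets $\Delta^{\circ}$. Now $b+c\equiv 0\pmod r$ is precisely the assertion that every nonzero $v_k$ lies on the anti-diagonal plane $\{x_2+x_3=1\}$ (i.e.\ that two of the weights are negatives modulo $r$), which produces the type $\tfrac1r(1,b,r-b)$ with $\gcd(b,r)=1$ as claimed. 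This last equivalence is the three-variable \emph{terminal lemma} (White, Frumkin, Morrison--Stevens~\cite{MS}), and it is the genuine obstacle: the reductions and the ``$\Leftarrow$'' direction are formal, whereas the converse requires the careful lattice-geometry bookkeeping indicated above—the $r$ representatives fill $[0,1)^3$, the two opposite corner tetrahedra of volume $\tfrac16$ contain only $v_0=0$, and the central symmetry $v_k\leftrightarrow(1,1,1)-v_k$ together with $\gcd(b,r)=\gcd(c,r)=1$ pins the configuration down to the anti-diagonal plane. For this step I would either reproduce White's original argument or invoke it directly from~\cite{MS}.
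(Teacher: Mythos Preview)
The paper does not give a proof of this theorem at all: it establishes the ``if'' direction in Example~\ref{Eg:discrepancy of 1/r(1,a,r-a)} and then simply \emph{cites} Morrison--Stevens~\cite{MS} for the full statement, including the converse. So there is no ``paper's own proof'' to compare against.

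Your proposal goes well beyond what the paper does. The reductions (forcing $\gcd(a_i,r)=1$, normalising to type $\tfrac1r(1,b,c)$) and the translation into the age/Reid--Tai inequality via Proposition~\ref{Prop:calculation of discrepancy} are correct and standard. Your ``if'' direction is exactly the computation of Example~\ref{Eg:discrepancy of 1/r(1,a,r-a)}. For the ``only if'' direction you correctly isolate the terminal lemma (White/Frumkin/Morrison--Stevens) as the genuine content and, like the paper, ultimately defer to~\cite{MS} for it. In that sense your proposal is strictly more informative than the paper while ending at the same citation.

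One small imprecision in your first reduction: a singular two-dimensional facet $\Cone(e_2,e_3)$ need not carry a lattice point in the \emph{open} triangle $\{x_2+x_3<1\}$ --- think of $\tfrac12(1,1)$, where the only extra point is $(\tfrac12,\tfrac12)$ on the line $x_2+x_3=1$. What is true is that there is always a nonzero lattice point with $x_2+x_3\le 1$ (if $v$ lies in the upper triangle then $(1,1)-v$ lies in the lower one), giving discrepancy $\le 0$ rather than $<0$. This still contradicts terminality, so your conclusion stands; just replace ``open triangle'' and ``discrepancy $<0$'' by ``closed triangle (minus the vertices)'' and ``discrepancy $\le 0$''. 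You might also note that, as is standard in this context, one tacitly assumes $G$ is small (no quasi-reflections), so that $X=\C^3/G$ really is the toric variety $U_{\sigma_+}$ for the overlattice $L$.
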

\subsection{Weighted blowups and round down functions}\label{subsec:Round Down}
Define the lattice $L=\Z^3 +\Z\cdot \tfrac{1}{r}(1,a,r-a)$. Set $\zzz = \Z^3 \subset L$. Consider the two dual lattices $M=\Homz(L,\Z)$, $\lau=\Homz(\zzz,\Z)$. Note that a (Laurent) monomial $\bm \in \lau$ is $G$-invariant if and only if $\bm$ is in $M$.
Let $\{e_1,e_2,e_3\}$ be the standard basis of $\Z^3$ and $\sigma_{+}$ the cone generated by $e_1,e_2,e_3$. Then $\Spec \C[\sigma_{+}\dual \cap M]$ is the quotient variety $X=\C^3/G$. Set $v=\tfrac{1}{r}(1,a,r-a) \in L$, which corresponds to the exceptional divisor of the smallest discrepancy (see Example~\ref{Eg:discrepancy of 1/r(1,a,r-a)}). 
Define three cones
\begin{align*}
\sigma_{1}=\Cone(v,e_2,e_3), \quad \sigma_{2}=\Cone(e_1,v,e_3), \quad\sigma_{3}=\Cone(e_1,e_2,v).
\end{align*}
Define $\Sigma$ to be the fan consisting of the three cones $\sigma_{1},\sigma_{2},\sigma_{3}$ and their faces.
The fan $\Sigma$ is the barycentric subdivision of $\sigma_{+}$ at $v$.
Let $Y_1$ be the toric variety corresponding to the fan $\Sigma$ together with the lattice $L$. 
The induced toric morphism $\varphi \colon Y_1 \rightarrow X$ is called {\em the weighted blowup} of $X$ with weight $(1,a,r-a)$.
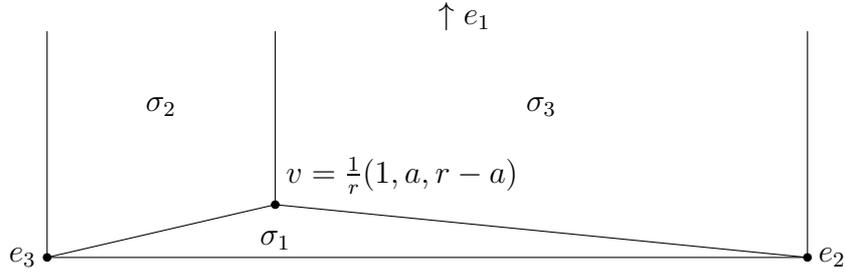
\begin{figure}[h]
\begin{center}
\begin{tikzpicture}
\coordinate [label=left:$e_3$] (e3) at (0,0);
\coordinate [label=right:$e_2$] (e2) at (10,0);
\coordinate (v1) at (3,0.7);
\draw[fill] (v1) circle [radius=0.05];
\draw[fill] (e3) circle [radius=0.05];
\draw[fill] (e2) circle [radius=0.05];
\node [above right] at (v1) {$v=\frac{1}{r}(1,a,r-a)$};
\draw (e3) -- (e2);
\draw (e3) -- (0,3);
\draw (e2) -- (10,3);
\draw (e3) -- (v1);
\draw (e2) -- (v1);
\draw (v1) -- (3,3);
\node [below] at (3,0.5) {$\sigma_{1}$};
\node  at (1.5,2) {$\sigma_{2}$};
\node  at (6.5,2) {$\sigma_{3}$};
\node [right] at (5,3.2) {$\uparrow e_1$};
\end{tikzpicture}
\end{center}
\caption{Weighted blowup of weight $(1,a,r-a)$}\label{Fig:wt blowup for 1/r(1,a,r-a)}
\end{figure}

Let us consider the sublattice $L_{2}$ of $L$ generated by $e_1,v,e_3$. Define $M_{2} := \Homz(L_{2},\Z)$ with the dual basis
\[
\xi_2:=x y^{-\frac{1}{a}}, \quad \eta_2:=y^{\frac{r}{a}}, \quad \zeta_2:=y^{\frac{a-r}{a}}z.
\] 
The lattice inclusion $L_{2} \hookrightarrow L$ induces a toric morphism 
\begin{equation*}
\varphi \colon \Spec \C[\sigma_{2}\dual \cap M_{2}] \rightarrow U_{2}:=\Spec \C[\sigma_{2}\dual \cap M].
\end{equation*}
Since $\C[\sigma_{2}\dual \cap M_{2}] \iso \C[\xi_2, \eta_2, \zeta_2]$ and the group $G_{2}:=L/L_{2}$ is of type $\lgr$ with eigencoordinates $\xi_2, \eta_2,\zeta_2$, the open subset $U_{2}$ has a quotient singularity of type $\lgr$. 

Similarly, consider the sublattice $L_3$ of $L$ generated by $e_1,e_2,v$. Let us define the lattice $M_{3} := \Homz(L_{3},\Z)$ with basis
\[
\xi_3:=x z^{-\frac{1}{r-a}}, \quad \eta_3:=yz^{\frac{-a}{r-a}}, \quad \zeta_3:=z^{\frac{r}{r-a}}.
\]
The open set $U_3=\Spec \C[\xi_3, \eta_3, \zeta_3]$ has a quotient singularity of type $\rgr$ with eigencoordinates $\xi_3, \eta_3,\zeta_3$. Set $G_{3}:=L/L_{3}$.

Lastly, consider the sublattice $L_1$ of $L$ generated by $v,e_2,e_3$. Let us define $M_{1} := \Homz(L_{1},\Z)$ with the dual basis
\[
\xi_1:=x^{\frac{1}{r}}, \quad \eta_1:=x^{-\frac{a}{r}}y, \quad \zeta_1:=x^{-\frac{r-a}{r}}z.
\]
Since $\{v,e_2,e_3\}$ forms a $\Z$-basis of $L$, i.e.\ $G_1=L/L_{1}$ is the trivial group, the open set $U_1=\Spec \C[\xi_1, \eta_1, \zeta_1]$ is smooth.
\begin{Eg} \label{Eg:wt blup of 1/7(1,3,4)} Let $G$ be the group of type $\frac{1}{7}(1,3,4)$ as in Example~\ref{Eg:G graph 1/7(1,3,4) }. The toric fan of the weighted blowup with weight $(1,3,4)$ is shown in Figure~\ref{Fig:Wt blowup of weight (1,3,4)}.

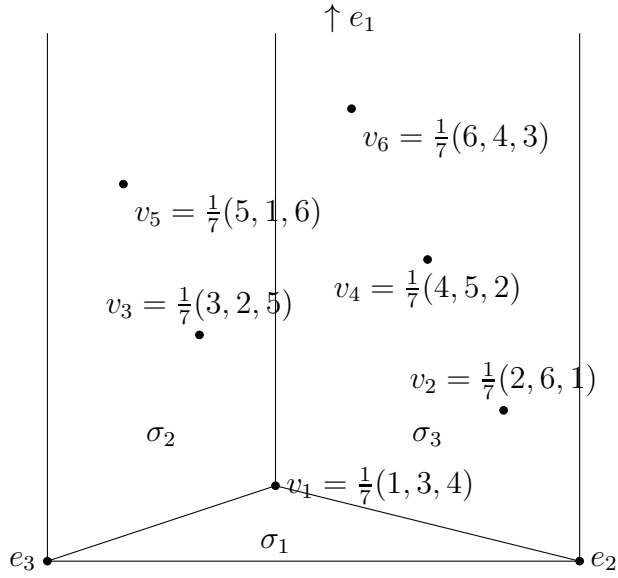
\begin{figure}[h]
\begin{center}
\begin{tikzpicture}
\coordinate [label=left:$e_3$] (e3) at (0,0);
\coordinate [label=right:$e_2$] (e2) at (7,0);
\coordinate (v1) at (3,1);
\coordinate (v2) at (6,2);
\coordinate (v3) at (2,3);
\coordinate (v4) at (5,4);
\coordinate (v5) at (1,5);
\coordinate (v6) at (4,6);

\foreach \t in {1,2,3,4,5,6}
\draw[fill] (v\t) circle [radius=0.05];
\foreach \t in {2,3}
\draw[fill] (e\t) circle [radius=0.05];

\node [right] at (v1) {$v_1=\frac{1}{7}(1,3,4)$};
\node [above] at (v2) {$v_2=\frac{1}{7}(2,6,1)$};
\node [above] at (v3) {$v_3=\frac{1}{7}(3,2,5)$};
\node [below] at (v4) {$v_4=\frac{1}{7}(4,5,2)$};
\node [below right] at (v5) {$v_5=\frac{1}{7}(5,1,6)$};
\node [below right] at (v6) {$v_6=\frac{1}{7}(6,4,3)$};

\draw (e3) -- (e2);
\draw (e3) -- (0,7);
\draw (e2) -- (7,7);
\draw (e3) -- (v1);
\draw (e2) -- (v1);
\draw (v1) -- (3,7);
\node [below] at (3,0.5) {$\sigma_{1}$};
\node  at (1.5,1.65) {$\sigma_{2}$};
\node  at (5,1.65) {$\sigma_{3}$};
\node [right] at (3.5,7.2) {$\uparrow e_1$};
\end{tikzpicture}
\end{center}
\caption{Weighted blowup of weight $(1,3,4)$}\label{Fig:Wt blowup of weight (1,3,4)}
\end{figure}

The affine toric variety corresponding to the cone $\sigma_2$ on the left side of $v=\frac{1}{7}(1,3,4)$ has a quotient singularity of type $\frac{1}{3}(1,2,1)$ with eigencoordinates $x y^{-\frac{1}{3}}, y^{\frac{7}{3}}, y^{-\frac{4}{3}}z$. The affine toric variety corresponding to the cone $\sigma_3$ on the right side of $v$ has a singularity of type $\frac{1}{4}(1,3,1)$ with eigencoordinates $x z^{-\frac{1}{4}}, yz^{-\frac{3}{4}}, z^{\frac{7}{4}}$.
On the other hand, the affine toric variety corresponding to the cone $\sigma_1=\Cone(e_2,e_3,v)$ is smooth as $e_2,e_3,v$ form a $\Z$-basis of $L$.\eeg
\begin{Def}[Round down functions]
With the notation above, the {\em left round down function} $\phi_{2} \colon \lau \to  M_{2}$ of the weighted blowup with weight $(1,a,r-a)$ is defined by
\[
\phi_{2}(x^{m_1}y^{m_2}z^{m_3})=\xi_2^{m_1}\eta_2^{\lf \frac{1}{r}m_1 + \frac{a}{r} m_2 + \frac{r-a}{r}m_3 \rf}\zeta_2^{m_3}.
\]
where $\lfloor \ \rfloor $ is the floor function. In a similar manner, the {\em right round down function} $\phi_{3} \colon \lau \to  M_{3}$ of the weighted blowup with weight $(1,a,r-a)$ is defined by
\[
\phi_{3}(x^{m_1}y^{m_2}z^{m_3})=\xi_3^{m_1}\eta_3^{m_2}\zeta_3^{\lf \frac{1}{r}m_1 + \frac{a}{r} m_2 + \frac{r-a}{r}m_3 \rf},
\]
and the {\em central round down function} $\phi_{1} \colon \lau \to  M_{1}$ of the weighted blowup with weight $(1,a,r-a)$ by
\[
\phi_{1}(x^{m_1}y^{m_2}z^{m_3})=\xi_1^{\lf \frac{1}{r}m_1 + \frac{a}{r} m_2 + \frac{r-a}{r}m_3 \rf}\eta_1^{m_2}\zeta_1^{m_3}.
\]
\end{Def}
\begin{Lem}\label{Lem:phi(m+n)=phi(m)+(n)/phi_k induces a surjective map G dual -> G_k dual}
For each $k=1,2,3$, let $\phi_k$ be the round down function of the weighted blowup with weight $(1,a,r-a)$. For a monomial $\bm \in \lau$ and a $G$-invariant monomial $\bn \in M$, 
\begin{equation*}
\phi_{k}(\bm \cdot \bn)= \phi_{k}(\bm)\cdot\bn.
\end{equation*}
Thus the weight of $\phi_{k}(\bm \cdot \bn)$ and the weight of $\phi_{k}(\bm)$ are the same in terms of the $G_k$-action. Therefore $\phi_k$ induces a surjective map
\[
\phi_k \colon G\dual \rightarrow G_k\dual,\quad \rho \mapsto \phi_k(\rho),
\]
where $\phi_k(\rho)$ is the weight of $\phi_k(\bm)$ for a monomial $\bm \in \lau$ of weight~$\rho$.
\end{Lem}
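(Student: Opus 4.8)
The plan is to treat all three round down functions uniformly by recognising the rational number under the floor. For $\bm=x^{m_1}y^{m_2}z^{m_3}\in\lau$ the quantity $\tfrac1r m_1+\tfrac ar m_2+\tfrac{r-a}{r}m_3$ is precisely the pairing $\langle \bm, v\rangle$ of $\bm$ with the lattice point $v=\tfrac1r(1,a,r-a)\in L$. Moreover, for each $k$ the two un-rounded exponents of $\phi_k(\bm)$ are the honest integer pairings of $\bm$ with the two generators of $L_k$ other than $v$ (e.g.\ for $\phi_2$ these are $\langle\bm,e_1\rangle=m_1$ and $\langle\bm,e_3\rangle=m_3$), while the rounded exponent sits on the coordinate dual to $v$. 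Thus, writing $\bm$ formally in the dual basis of $(M_k)\otimes\Q$, the function $\phi_k$ simply replaces the single $v$-dual exponent $\langle\bm,v\rangle$ by its floor $\lf\langle\bm,v\rangle\rf$ and keeps the other two (integer) exponents unchanged.

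With this description the first assertion is immediate. Since $v\in L$ and $\bn\in M=\Homz(L,\Z)$, the pairing $\langle\bn,v\rangle$ is an integer, and $\Z$-bilinearity gives $\langle\bm\cdot\bn,v\rangle=\langle\bm,v\rangle+\langle\bn,v\rangle$, whence
\[
\lf\langle\bm\cdot\bn,v\rangle\rf=\lf\langle\bm,v\rangle\rf+\langle\bn,v\rangle .
\]
On the un-rounded coordinates the exponents are additive. Expanding $\bn$ itself in the dual basis of $M_k$ produces the $v$-dual exponent $\langle\bn,v\rangle$ and the integer exponents $\langle\bn,\cdot\rangle$ on the other two coordinates, so multiplying $\phi_k(\bm)$ by $\bn$ shifts each exponent by exactly the corresponding pairing of $\bn$; by the displayed floor identity this reproduces $\phi_k(\bm\cdot\bn)$, giving $\phi_k(\bm\cdot\bn)=\phi_k(\bm)\cdot\bn$.

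For the induced map I would argue as follows. Two monomials carry the same $G$-weight precisely when their quotient lies in $\ker(\wt)=M$, i.e.\ when they differ by multiplication by some $\bn\in M$. Since $M=(M_k)^{G_k}$ is exactly the $G_k$-invariant part of $M_k$, multiplication by $\bn$ leaves the $G_k$-weight unchanged; combined with the first assertion this shows $\phi_k(\bm\cdot\bn)$ and $\phi_k(\bm)$ have the same $G_k$-weight, so $\rho\mapsto\phi_k(\rho)$ is well defined on $G\dual$. For surjectivity, the case $k=1$ is vacuous because $G_1$ is trivial. For $k=2,3$ note that $|G_2|=a$ and $|G_3|=r-a$ are both $<r$, so for $0\le j<|G_k|$ we have $\lf j/r\rf=0$ and hence $\phi_k(x^j)=\xi_k^{\,j}$; as $\xi_k$ is the coordinate whose $G_k$-weight is the generator of the cyclic group $G_k\dual\iso\Z/|G_k|$, the weights of $\xi_k^{0},\dots,\xi_k^{\,|G_k|-1}$ exhaust $G_k\dual$, so $\phi_k$ is surjective.

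The only place needing care — and the routine but fiddly part of the write-up — is the dual-basis bookkeeping: verifying that each $\phi_k(\bm)$ is indeed the ``$v$-coordinate-floored'' expansion of $\bm$ in the basis of $M_k$ dual to the generators of $L_k$, and that $\bn\in M\subset M_k$ expands with $v$-dual exponent $\langle\bn,v\rangle$. Once these identifications are in place, the entire lemma rests on the single elementary fact $\lf t+j\rf=\lf t\rf+j$ for $j\in\Z$, together with $\Z$-bilinearity of the pairing and $v\in L$. There is no analytic or geometric obstacle; the content is precisely the compatibility of the floor with the integer translations produced by $G$-invariant monomials.
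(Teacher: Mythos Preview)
Your argument is correct and is essentially the approach the paper takes, only you spell out what the paper compresses into ``Since $M_{k}$ contains $M$ as the lattice of $G_k$-invariant monomials, $\bn$ is in $M_{k}$. By definition, the assertions follow.'' The key point in both is that $\bn\in M$ forces $\langle\bn,v\rangle\in\Z$, so the floor is additive; your explicit surjectivity argument via $\phi_k(x^j)=\xi_k^{\,j}$ is a welcome addition that the paper leaves implicit.
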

\begin{proof}
Since $M_{k}$ contains $M$ as the lattice of $G_k$-invariant monomials, $\bn$ is in $M_{k}$. By definition, the assertions follow.
\end{proof}
\begin{Rem}
Davis, Logvinenko, and Reid\cite{DLR} introduced a related construction in a more general setting.
\erem
\begin{Lem}\label{Lem:localizations,lacing}
For each $k=1,2,3$, let $\phi_k$ be the round down function of the weighted blowup with weight $(1,a,r-a)$. Let $\bm \in \lau$ be a Laurent monomial of weight $j$. 
\begin{enumerate}
\item If $0\leq j<r-a$, then $\phi_{2}(y\cdot \bm)=\phi_{2}(\bm)$.
\item If $0\leq j<a$, then $\phi_{3}(z\cdot \bm)=\phi_{3}(\bm)$.
\item If $0\leq j<r-1$, then $\phi_{1}(x\cdot \bm)=\phi_{1}(\bm)$.
\item If $\phi_k(\bm)=\phi_k(\bm')$, then $\bm=\bn\cdot \bm'$ or $\bm'=\bn\cdot \bm$ for some $\bn\in\mon$.
\end{enumerate}
\end{Lem}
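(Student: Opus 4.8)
The plan is to reduce the entire lemma to a single observation about the floor exponent that is common to all three round down functions. Writing $\bm = x^{m_1}y^{m_2}z^{m_3}$, set
\[
w(\bm) := \tfrac{1}{r}m_1 + \tfrac{a}{r}m_2 + \tfrac{r-a}{r}m_3,
\]
so that $\lf w(\bm)\rf$ is exactly the exponent appearing in $\phi_1,\phi_2,\phi_3$. The key point I would record first is that $r\,w(\bm)=m_1+am_2+(r-a)m_3\equiv \wt(\bm)\pmod r$; taking $j=\wt(\bm)$ in $\{0,1,\dots,r-1\}$, the fractional part of $w(\bm)$ is therefore $j/r$, whence
\[
\lf w(\bm)\rf = w(\bm) - \tfrac{j}{r}.
\]
This holds even when $\bm$ is a genuine Laurent monomial (so $w(\bm)$ may be negative), and parts (i)--(iii) are all instances of it.

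For (i), multiplication by $y$ sends $w(\bm)$ to $w(\bm)+\tfrac{a}{r}$ and leaves $m_1,m_3$ untouched; since the $y$-data enters $\phi_2$ only through the floor exponent of $\eta_2$, it suffices to verify $\lf w(\bm)+\tfrac{a}{r}\rf=\lf w(\bm)\rf$. By the identity above this reads $\lf \tfrac{j+a}{r}\rf=0$, which holds precisely when $j+a<r$, i.e.\ $0\le j<r-a$. Parts (ii) and (iii) have identical shape: multiplication by $z$ (resp.\ $x$) shifts $w$ by $\tfrac{r-a}{r}$ (resp.\ $\tfrac{1}{r}$) and affects $\phi_3$ (resp.\ $\phi_1$) only through the floor exponent of $\zeta_3$ (resp.\ $\xi_1$), so the claims reduce to $\lf \tfrac{j+r-a}{r}\rf=0$ and $\lf \tfrac{j+1}{r}\rf=0$, yielding the bounds $0\le j<a$ and $0\le j<r-1$. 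Each is a one-line floor estimate once the fractional-part identity is in hand.

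For (iv) I would instead use that $\{\xi_k,\eta_k,\zeta_k\}$ is a $\Z$-basis of $M_k$, so exponents may be read off unambiguously. Taking $k=2$, the equality $\phi_2(\bm)=\phi_2(\bm')$ forces $m_1=m_1'$ and $m_3=m_3'$ (the $\xi_2$- and $\zeta_2$-exponents), while the $\eta_2$-exponents $\lf w(\bm)\rf$ and $\lf w(\bm')\rf$ are permitted to differ; hence $\bm/\bm'=y^{m_2-m_2'}$ is a pure power of $y$, and one of $\bm,\bm'$ is obtained from the other by multiplying by the genuine monomial $y^{|m_2-m_2'|}\in\mon$. The cases $k=1,3$ are the same with the rounded variable being $x$ and $z$ respectively, giving $\bm/\bm'=x^{m_1-m_1'}$ and $\bm/\bm'=z^{m_3-m_3'}$.

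I do not anticipate a genuine obstacle: the whole statement is a direct computation. The only point demanding care is the bookkeeping, namely matching, for each $k$, the variable whose exponent is replaced by a floor with the variable that is multiplied in the corresponding part of (i)--(iii). Getting this pairing right is exactly what makes the three thresholds $r-a$, $a$, and $r-1$ come out correctly, and it is also what makes part (iv) pin down a single free variable.
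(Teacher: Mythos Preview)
Your proof is correct and follows essentially the same computation as the paper's: identify the fractional part of $w(\bm)$ with $j/r$ and check that the relevant shift keeps the floor fixed; the paper records this as the inequality $0\le w(\bm)-\lf w(\bm)\rf<\tfrac{r-a}{r}$ and then treats (ii), (iii), (iv) in one line. One small slip in your part (iv): since $\phi_2(\bm)=\phi_2(\bm')$ is an equality in $M_2$, the $\eta_2$-exponents $\lf w(\bm)\rf$ and $\lf w(\bm')\rf$ must also agree, not merely ``are permitted to differ''; but this does not affect your argument, as $m_1=m_1'$ and $m_3=m_3'$ alone already force $\bm/\bm'$ to be a power of $y$.
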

\begin{proof}
Let $\bm=x^{m_1}y^{m_2}z^{m_3}$ be a Laurent monomial of weight $j$. To prove (i), assume that $0\leq j<r-a$. This means that
\[
0 \leq \frac{1}{r}m_1 + \frac{a}{r} m_2 + \frac{r-a}{r}m_3 - \lf \frac{1}{r}m_1 + \frac{a}{r} m_2 + \frac{r-a}{r}m_3 \rf <\frac{r-a}{r}.
\]
Thus $\phi_{2}(y\cdot \bm)=\phi_{2}(x^{m_1}y^{m_2+1}z^{m_3})=\phi_{2}(x^{m_1}y^{m_2}z^{m_3})=\phi_{2}(\bm)$.
The assertions (ii) and (iii) can be proved similarly. The definition of $\phi_k$ implies (iv).
\end{proof}
\begin{Lem}\label{Lem:Connected Gamma}
For each $k=1,2,3$, let $\phi_k$ be the round down function of the weighted blowup with weight $(1,a,r-a)$. Let $\bk$ be a lattice point in the monomial lattice $M_k$ and $\bg$ a monomial of degree 1 in $M_k$.
There exist a monomial $\bbf \in \{x,y,z\}$ of degree 1 and $\bm \in \lau$ such that
\[
\phi_k(\bbf \cdot \bm)=\bg\cdot\bk
\] 
satisfying $\phi_k(\bm)= \bk$.
\end{Lem}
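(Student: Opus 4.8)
The plan is to treat one round-down function at a time. All three $\phi_k$ have the same shape: writing $\bm=x^{m_1}y^{m_2}z^{m_3}$ and setting $w=m_1+am_2+(r-a)m_3$, the map $\phi_k$ keeps two of the exponents $m_1,m_2,m_3$ as honest coordinates of $M_k$ and replaces the third by the floor $\lf w/r\rf$. I would carry out the argument in full for $\phi_2$, where the coordinates of $\phi_2(\bm)$ in the basis $\xi_2,\eta_2,\zeta_2$ are $(m_1,\lf w/r\rf,m_3)$, and remark that $\phi_1$ and $\phi_3$ are identical after interchanging the roles of $x,y,z$. Note that a degree-$1$ monomial $\bg$ in $M_2$ is one of $\xi_2,\eta_2,\zeta_2$.

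First I would record, via Lemma~\ref{Lem:localizations,lacing}, how multiplication by $x,y,z$ acts on these coordinates. Multiplication by $x$ raises $m_1$ and by $z$ raises $m_3$, while the middle coordinate $\lf w/r\rf$ jumps by one exactly when the increment of $w$ crosses a multiple of $r$: for $x$ (which sends $w\mapsto w+1$) this happens iff $\wt(\bm)=r-1$; for $z$ ($w\mapsto w+(r-a)$) iff $\wt(\bm)\ge a$; and for $y$ ($w\mapsto w+a$) iff $\wt(\bm)\in[r-a,r)$, the complement of Lemma~\ref{Lem:localizations,lacing}(i). Each generator $\bg\in\{\xi_2,\eta_2,\zeta_2\}$ prescribes which single coordinate of $\bg\cdot\bk$ differs from $\bk$ and which two agree, so the variable $\bbf$ is forced to be $x,y,z$ respectively, and the equation $\phi_2(\bbf\cdot\bm)=\bg\cdot\bk$ turns into a condition on the weight $j=\wt(\bm)$ of the preimage $\bm$: namely $j\neq r-1$ for $\bg=\xi_2$, $j\in[r-a,r)$ for $\bg=\eta_2$, and $j<a$ for $\bg=\zeta_2$.

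It then remains to prove the combinatorial claim that the fibre $\phi_2^{-1}(\bk)$ contains a monomial of weight lying in the prescribed subinterval of $[0,r)$. Fixing $\bk=\xi_2^{p}\eta_2^{q}\zeta_2^{s}$ forces $m_1=p$ and $m_3=s$ and leaves $m_2$ free subject to $rq\le w<r(q+1)$; hence the fibre is nonempty and the weights $j=w-rq$ that occur form an arithmetic progression of step $a$ inside $[0,r)$ whose least term lies in $[0,a)$ and whose greatest term lies in $[r-a,r)$, since an interval of length $r>a$ always meets the progression and its extreme terms sit within one step $a$ of the endpoints. The cases $\bg=\zeta_2$ and $\bg=\eta_2$ are settled by choosing the preimage of least, resp.\ greatest, weight; for $\bg=\xi_2$ one takes the greatest weight unless it equals $r-1$, in which case the preceding progression term has weight $r-1-a$, which is still $\ge 0$ because $a\le r-1$ and still lies in the fibre.

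The crux---and the only place where any real work is needed---is the non-multiplicativity of $\phi_k$ produced by the floor: multiplying a preimage by $\bbf$ may create an unwanted carry in the floor coordinate, so $\bg$ cannot be lifted to $\bbf$ blindly; one must instead choose the preimage inside the fibre so that its weight avoids (or lands in) the carrying range, which is exactly the fibre-spread statement above. That statement in turn uses $1\le a\le r-1$ together with $\hcf(r,a)=1$. For $\phi_1$ the progression has step $1$, so every weight in $[0,r)$ is realised and the three conditions hold trivially; for $\phi_3$ it has step $r-a$, with least term in $[0,r-a)$ and greatest term in $[a,r)$, and the identical case analysis (now with $\bbf=x,y,z$ giving conditions $j\neq r-1$, $j<r-a$, $j\ge a$) completes the proof.
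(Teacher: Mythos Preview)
Your proof is correct and is essentially the same argument as the paper's, just organised more explicitly. The paper also works with $\phi_2$ and a single representative case $\bg=\zeta_2$: it picks an arbitrary preimage $\bm\in\phi_2^{-1}(\bk)$ and, if $\phi_2(z\cdot\bm)\neq\zeta_2\cdot\bk$, repeatedly divides by $y$ until reaching the element $\bm/y^{l_0}$ of the fibre for which the carry no longer occurs---this is exactly your ``preimage of least weight'' in the fibre. Your description of the fibre as an arithmetic progression of weights with step $a$, extremes in $[0,a)$ and $[r-a,r)$, makes the same mechanism transparent and lets you dispatch all three choices of $\bg$ (and all three $\phi_k$) uniformly, whereas the paper only writes out one subcase and leaves the rest as ``similarly''. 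One minor simplification: for $\bg=\xi_2$ you can always take the element of least weight $j_{\min}<a\le r-1$, avoiding the case split on whether the greatest weight equals $r-1$.
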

\begin{proof}
Here we prove the assertion for the left round down function. Let $\xi,\eta,\zeta$ denote the eigencoordinates for the $G_2$-action. 
Let $\bk$ be a monomial in $M_2$ and $\bg \in \{\xi,\eta,\zeta\}$. 

Consider the case where $\bg=\zeta$. Since $\phi_2$ is surjective, there exists $\bm=x^{m_1}y^{m_2}z^{m_3} \in \lau$ such that $\phi_2(\bm)=\bk$.
If $\zeta \cdot\bk=\phi_2(z \cdot \bm)$, then we are done. 

Suppose $\zeta \cdot\bk \neq \phi_{2}(z \cdot\bm)$. This means that
\begin{equation*}
\frac{1}{r}m_1 + \frac{a}{r}m_2 + \frac{r-a}{r}m_3 + \frac{r-a}{r} \geq \left\lf\frac{1}{r}m_1 + \frac{a}{r}m_2 + \frac{r-a}{r}m_3\right\rf +1.
\end{equation*}
There is a positive integer $l_0$\footnote{This integer $l_0$ is the largest integer satisfying 
\begin{equation*}
\frac{1}{r}m_1 + \frac{a}{r}m_2 + \frac{r-a}{r}m_3 - \frac{a}{r}l \geq \left\lf\frac{1}{r}m_1 + \frac{a}{r}m_2 + \frac{r-a}{r}m_3\right\rf.
\end{equation*}
}
such that $\phi_{2}(\frac{\bm}{y^l})=\bk$ for all $0 \leq l \leq l_0$ with $\phi_{2}\Big(\frac{\bm}{y^{l_0+1}} \Big)\neq \bk$. Since $\phi_{2}(z \cdot \frac{\bm}{y^{l_0}})=\zeta \cdot \bk$, the assertion follows.

For the other cases, we can prove the assertion similarly.
\end{proof}

\subsection{Economic resolutions}\label{subsec:Econ resoln}
By the fact that the quotient variety $X=\C^3/G$ has terminal singularities, $X$ does not admit crepant resolutions. However $X$ has a certain toric resolution introduced by Danilov\cite{D82} (see also~\cite{R87}).
\begin{Def}
Let $G\subset \GL_3(\C)$ be the group of type $\frac{1}{r}(1,a,r-a)$.
For each $1 \leq i < r$, let $v_i:=\frac{1}{r}(i,\overline{ai},\overline{r-ai}) \in L$ where $\bar{\quad}$ denotes the residue modulo $r$. The {\em economic resolution} of $\C^3/G$ is the toric variety obtained by the consecutive weighted blowups at $v_1,v_2,\ldots,v_{r-1}$ from $\C^3/G$.
\end{Def}
\begin{Prop}[see \cite{R87}]
Let $\varphi \colon Y \rightarrow X=\C^3/G$ be the economic resolution of $\C^3/G$. For each $1 \leq i < r$, let $E_i$ denote the exceptional divisor of $\varphi$ corresponding to the lattice point $v_i$.
\begin{enumerate}
\item The toric variety $Y$ is smooth and projective over $X$.
\item The morphism $\varphi$ satisfies
\[\KY=\varphi\pull(\KX)+\sum_{1 \leq i < r}\frac{i}{r}E_i.\] In particular, each discrepancy is $0< \frac{i}{r} <1$.
\end{enumerate}
\end{Prop}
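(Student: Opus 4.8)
The plan is to treat the three claims in increasing order of difficulty: projectivity is formal, the discrepancy formula is a one-line support-function computation, and smoothness carries all the content and will be handled by induction on $r$. For projectivity, note first that $Y$ is toric by construction, being the toric variety of the fan obtained from $\sigma_+$ by the $r-1$ successive barycentric subdivisions at $v_1,\dots,v_{r-1}$; by Proposition~\ref{Prop:calculation of discrepancy}(i) each such subdivision induces a projective toric morphism, and since $\varphi$ is the composite of these $r-1$ projective morphisms and composites of projective morphisms are projective, $\varphi\colon Y\to X$ is projective.

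For the discrepancy formula I would avoid propagating discrepancies through the intermediate blowups and instead use the single globally linear functional $\psi(u)=u_1+u_2+u_3$ on $L_{\R}=\R^3$. Since $\psi(e_j)=1$ for $j=1,2,3$, it is the support function of the $\Q$-Cartier divisor $\KX$ on $X=U_{\sigma_+}$, and being linear on all of $\sigma_+$ it is at the same time the support function of $\varphi\pull(\KX)$ on the refined fan of $Y$. Exactly as in Proposition~\ref{Prop:calculation of discrepancy}, the coefficient of a torus-invariant divisor $D_\rho$ in $\KY-\varphi\pull(\KX)$ is then $\langle x_1x_2x_3,v_\rho\rangle-1=\psi(v_\rho)-1$. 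The original rays give $\psi(e_j)-1=0$, while for $v_i=\tfrac1r(i,\overline{ai},\overline{r-ai})$ one has $\psi(v_i)=\tfrac{i+\overline{ai}+\overline{r-ai}}{r}=1+\tfrac ir$, because $\gcd(a,r)=1$ and $1\le i<r$ force $\overline{ai}\in\{1,\dots,r-1\}$ and $\overline{r-ai}=r-\overline{ai}$, so $\overline{ai}+\overline{r-ai}=r$. Hence the coefficient of $E_i$ is $\tfrac ir$, giving $\KY=\varphi\pull(\KX)+\sum_{1\le i<r}\tfrac ir E_i$ with every discrepancy in $(0,1)$; this matches the single-step computation in Example~\ref{Eg:discrepancy of 1/r(1,a,r-a)}.

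The smoothness of $Y$ is the substantive point, and I would prove it by induction on $r$ using the recursive structure set up in Section~\ref{subsec:Round Down}. Every weighted blowup subdivides a simplicial cone into simplicial cones, so each maximal cone of the fan of $Y$ has exactly three generators, and smoothness reduces to showing each such triple is a $\Z$-basis of $L$. The base case $r=1$ is trivial. For the inductive step, after the first blowup at $v_1=\tfrac1r(1,a,r-a)$ the chart $\Cone(v_1,e_2,e_3)$ is already smooth, since $\{v_1,e_2,e_3\}$ is a $\Z$-basis of $L$, whereas the charts $\Cone(e_1,v_1,e_3)$ and $\Cone(e_1,e_2,v_1)$ carry terminal cyclic quotient singularities of types $\lgr$ and $\rgr$ of strictly smaller orders $a$ and $r-a$, with parameters again coprime to the order. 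Counting is consistent, since $1+(a-1)+(r-a-1)=r-1$, so the remaining $r-2$ centers $v_2,\dots,v_{r-1}$ ought to split into $a-1$ centers in $\Cone(e_1,v_1,e_3)$ and $r-a-1$ in $\Cone(e_1,e_2,v_1)$. If each of these subfamilies is the family of economic-resolution centers of $G_2$, respectively $G_3$, expressed in $L$ via the inclusions $L_2,L_3\hookrightarrow L$, then the induction hypothesis makes both refined charts smooth, and hence $Y$ is smooth.

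The main obstacle is precisely this last bookkeeping. Writing a remaining center $v_i$ lying in $\Cone(e_1,v_1,e_3)$ in the basis $\{e_1,v_1,e_3\}$ of $L_2$, I must check that the resulting point is one of the economic centers $v'_j=\tfrac1a(j,\overline{-rj},\overline{rj})$ of $G_2$ (residues mod $a$), that no center lies on the shared wall $\Cone(e_1,v_1)$ so that the two subfamilies are disjoint and exhaustive, and that the induced order on each subfamily produces the same final fan as the economic order for the smaller group. The round-down functions $\phi_2,\phi_3$ of Section~\ref{subsec:Round Down}, together with Lemma~\ref{Lem:localizations,lacing}, furnish the dual dictionary needed for this identification, the floor functions encoding exactly how the height $\psi(v_i)=1+\tfrac ir$ orders the centers. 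Since the full combinatorial verification is classical, I would carry out the identification in the first nontrivial cases and otherwise defer to the treatment in \cite{R87}.
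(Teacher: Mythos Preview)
The paper does not prove this proposition at all: it is stated with the attribution ``[see \cite{R87}]'' and immediately followed by the next paragraph, so there is no argument to compare against. Your proposal is correct in outline and is in fact the natural proof using the recursive structure that the paper itself sets up in Section~\ref{subsec:Round Down} and later exploits in Proposition~\ref{Prop:there exists a set of G-bricks}; the discrepancy computation you give is exactly Example~\ref{Eg:discrepancy of 1/r(1,a,r-a)}.

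The one place you hedge, the bookkeeping that the remaining centres $v_2,\dots,v_{r-1}$ split into the economic-resolution centres of $G_2$ and $G_3$ in the correct order, can be settled directly rather than deferred. Writing $v_i=\tfrac{1}{r}(i,\overline{ai},r-\overline{ai})$ in the basis $\{e_1,v_1,e_3\}$ of $L_2$ gives coefficients $\tfrac{1}{a}(\lfloor ai/r\rfloor,\,\overline{ai},\,a-\overline{ai})$, so $v_i\in\sigma_2$ precisely when $1\le\overline{ai}\le a-1$, and in that case $v_i$ is exactly the $G_2$-centre $v'_j$ with $j=\lfloor ai/r\rfloor$ (since $rj=ai-\overline{ai}\equiv -\overline{ai}\pmod a$). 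The map $i\mapsto\lfloor ai/r\rfloor$ is strictly increasing on this set because for $i<i'$ with $\overline{ai},\overline{ai'}\in\{1,\dots,a-1\}$ one has $a(i'-i)-\bigl(\overline{ai'}-\overline{ai}\bigr)>0$ and $\equiv 0\pmod r$, so the difference of floors is a positive integer. The symmetric computation handles $\sigma_3$, and no $v_i$ with $i\ge 2$ lies on the shared wall $\Cone(e_1,v_1)$ since $\overline{ai}=a$ forces $i=1$. With this in hand your induction closes and the deferral to \cite{R87} is unnecessary.
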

From the fan of $Y$, we can see that $Y$ can be covered by three open sets $U_2$, $U_3$ and $U_1$, which are the unions of the affine toric varieties corresponding to the cones on the left side of, the right side of, and below the vector $v=\frac{1}{r}(1,a,r-a)$, respectively. Note that $U_2$ and $U_3$ are isomorphic to the economic resolutions for the singularity of type $\lgr$ and of type $\rgr$, respectively. 
\begin{Rem}
Let $\Sigma$ be the toric fan of the economic resolution $Y$. Note that the number of 3-dimensional cones in $\Sigma$ is $2r-1$ and that the number of 3-dimensional cones containing $e_1$ is $r$.\erem
\begin{Eg} \label{Eg:econ. resolns of 1/7(1,3,4)} Let $G$ be the group of type $\frac{1}{7}(1,3,4)$ as in Example~\ref{Eg:G graph 1/7(1,3,4) }. The fan of the economic resolution of the quotient variety $\C^3/G$ is shown in Figure~\ref{Fig:Fan of the economic resolution for 1/7(1,3,4)}.
\begin{figure}[h]
\begin{center}
\begin{tikzpicture}
\coordinate [label=left:$e_3$] (e3) at (0,0);
\coordinate [label=right:$e_2$] (e2) at (7,0);
\foreach \x in {1,2,3,4,5,6}
    \draw (\x ,2pt) -- (\x ,-2pt);
\foreach \y in {1,2,3,4,5,6}
    \draw (2pt,\y) -- (-2pt,\y);
\coordinate (v1) at (3,1);
\coordinate (v2) at (6,2);
\coordinate (v3) at (2,3);
\coordinate (v4) at (5,4);
\coordinate (v5) at (1,5);
\coordinate (v6) at (4,6);

\foreach \t in {1,2,3,4,5,6}
\draw[fill] (v\t) circle [radius=0.05];
\foreach \t in {2,3}
\draw[fill] (e\t) circle [radius=0.05];
\node [below] at (3,0.8) {$v=\frac{1}{7}(1,3,4)$};
\foreach \t in {2,3,4,5,6}
\node [right] at (v\t) {$v_{\t}$};

\draw (e3) -- (e2);
\draw (e3) -- (0,7);
\draw (e2) -- (7,7);

\draw (v1) -- (3,7);
\draw (v2) -- (6,7);
\draw (v3) -- (2,7);
\draw (v4) -- (5,7);
\draw (v5) -- (1,7);
\draw (v6) -- (4,7);

\draw (v1) -- (v5);
\draw (e2) -- (v6);

\foreach \t in {1,3,5}
\draw (e3) -- (v\t);
\foreach \t in {1,6}
\draw (e2) -- (v\t);
\foreach \t in {2,4,6}
\draw (v1) -- (v\t);

\node at (2.5,6.5) {$\sigma_{1}$};
\node at (3.5,6.5) {$\sigma_{2}$};

\node [right] at (3.5,7.2) {$\uparrow e_1$};

\end{tikzpicture}
\end{center}
\caption{Fan of the economic resolution for $\frac{1}{7}(1,3,4)$}\label{Fig:Fan of the economic resolution for 1/7(1,3,4)}
\end{figure}
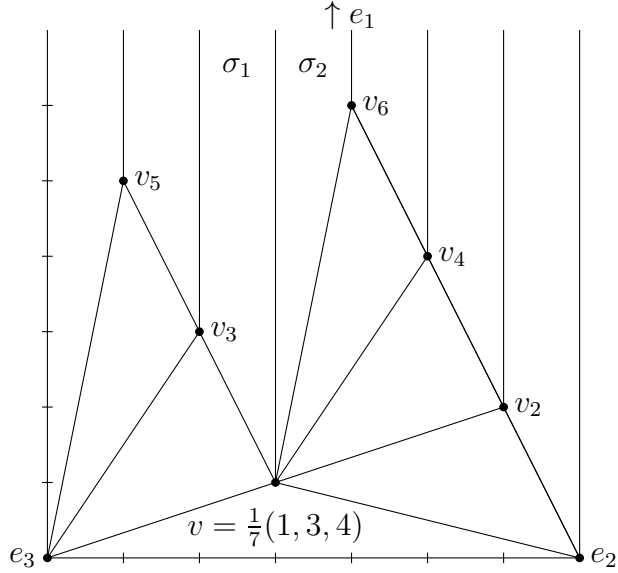

The toric variety corresponding to the fan consisting of the cones on the left side of $v=\frac{1}{7}(1,3,4)$ is the economic resolution of the quotient $\frac{1}{3}(1,2,1)$. On the other hand, the toric variety corresponding to the fan consisting of the cones on the right side of $v$ is the economic resolution of the quotient $\frac{1}{4}(1,3,1)$.\eeg
\section{Moduli interpretations of economic resolutions}\label{Sec:Main Theorem}
This section contains our main theorem. First, we explain how to find a  set $\gr(r,a)$ of $G$-bricks using the round down functions and a recursion process. In Section~\ref{subsec:Our stability parameters}, we show that there exists a stability parameter $\theta$ such that every $G$-brick in $\gr(r,a)$ is $\theta$-stable.
\subsection{$G$-bricks and stability parameters for $\frac{1}{r}(1,r-1,1)$}\label{subsec:G-bricks for 1/r(1,r-1,1)}
Let $G$ be the group of $\frac{1}{r}(1,r-1,1)$ type, i.e.\ $a=1 \text{ or } r-1$. In this case, K\k{e}dzierski\cite{Ked04} proved that $\GHilb{3}$ is isomorphic to the economic resolution of $\C^3/G$.
\begin{Thm}[K\k{e}dzierski\cite{Ked04}]\label{Thm:Ked. a=1 smooth}
Let $G \subset \GL_3(\C)$ be the finite group of type $\frac{1}{r}(1,a,r-a)$ with $a=1 \text{ or } r-1$. Then
$\GHilb{3}$ is isomorphic to the economic resolution of the quotient variety $\C^3/G$. 
\end{Thm}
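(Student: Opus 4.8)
The plan is to realize $\GHilb{3}$ as the moduli space $\mth$ for the Hilbert chamber parameter $\theta_0$, i.e.\ the generic parameter for which the $\theta_0$-stable $G$-constellations are exactly the structure sheaves $\OZ$ of $G$-clusters; then the birational component $\yth$ is the coherent component of $G$-Hilb, and by Theorem~\ref{Thm:Y theta with G-bricks} we have $\yth \iso \bigcup_{\Gamma \in \gr} U(\Gamma)$, where $\gr$ is the set of $\theta_0$-stable $G$-bricks, with normalization the toric variety of the fan $\{\sigma(\Gamma)\}_{\Gamma \in \gr}$. It therefore suffices to exhibit, for each maximal cone $\sigma$ of the fan $\Sigma$ of the economic resolution $Y$, a $\theta_0$-stable $G$-brick $\Gamma_\sigma$ consisting of \emph{genuine} monomials (a Nakamura $G$-graph) with $\sigma(\Gamma_\sigma)=\sigma$ and $S(\Gamma_\sigma)=\sigma\dual \cap M$, and to prove that $\sigma \mapsto \Gamma_\sigma$ is a bijection onto $\gr$. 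Granting this, Corollary~\ref{Cor:When Y theta is normal} makes $\yth$ normal, so $\yth \iso Y$; and since every chart $U(\Gamma_\sigma)\iso U_\sigma$ is smooth and carries only genuine $G$-clusters, $\mth$ has no component beyond $\yth$, whence $\GHilb{3}=\mth=\yth\iso Y$.

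Because the case $a=r-1$ is obtained from $a=1$ by permuting the coordinates $y,z$, I would treat $a=1$, so $G$ is of type $\frac{1}{r}(1,1,r-1)$, and argue by induction on $r$. The single weighted blowup at $v=\frac{1}{r}(1,1,r-1)$ produces the three charts of Section~\ref{subsec:Round Down}: the chart below $v$ is smooth, the left chart has type $\lgr$, which is \emph{trivial} when $a=1$ (hence smooth), and the right chart has type $\rgr$, which reduces modulo $r-1$ to $\frac{1}{r-1}(1,1,r-2)$, again of the inductive form with $r$ replaced by $r-1$. The inductive hypothesis supplies the $G_3$-graphs tiling the economic resolution on the right chart; using the right round down function $\phi_3\colon\lau \to M_3$ together with Lemma~\ref{Lem:localizations,lacing} and the lifting Lemma~\ref{Lem:Connected Gamma}, I would lift each $G_3$-graph to a $G$-graph on $Y$ and, in parallel, construct the finitely many new $G$-graphs attached to the cones incident to $v$ (using $\phi_1,\phi_2$ for the remaining charts). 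Each lift is again a set of genuine monomials that is connected in the sense of Definition~\ref{Def:G-prebrick}(iv) and closed under division, hence a genuine Nakamura $G$-graph; consequently $C(\Gamma_\sigma)=\OZ$ for a $G$-cluster $Z$ and is automatically $\theta_0$-stable.

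The remaining, and most delicate, steps are the fan-matching and the count. For each constructed $\Gamma_\sigma$ I would verify directly that its Border basis $B(\Gamma_\sigma)$ generates $S(\Gamma_\sigma)=\sigma\dual\cap M$ (Lemma~\ref{Lem:crucial finitely gen}) and that the dual cone $\sigma(\Gamma_\sigma)$ is precisely the maximal cone $\sigma$ of $\Sigma$; smoothness of $Y$ then forces $U(\Gamma_\sigma)\iso U_\sigma$. By Proposition~\ref{Prop:1-to-1 corr between torus fixed points and G-bricks} the $\theta_0$-stable $G$-bricks are in bijection with the torus fixed points of $\yth$, and the economic resolution fan $\Sigma$ has exactly $2r-1$ maximal cones; the induction ($2(r-1)-1$ lifted cones together with the $2$ new cones at $v$ yields $2r-1$) shows that the $\Gamma_\sigma$ are pairwise distinct and exhaust $\gr$. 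Applying Theorem~\ref{Thm:Y theta with G-bricks} then gives $\yth\iso\bigcup_\sigma U_\sigma = Y$, completing the argument.

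The main obstacle I anticipate is the combinatorial bookkeeping in the inductive lifting: proving that the monomials produced by inverting the floor functions in $\phi_1,\phi_2,\phi_3$ are always genuine (never Laurent), that the lifted and newly created $G$-graphs are mutually distinct and connected, and that their cones tile $\Sigma$ with neither overlap nor gap. Equivalently, the crux is to show that the construction hits all $2r-1$ torus fixed $G$-clusters and no spurious ones, i.e.\ controlling how the round down functions interact with the weight-to-monomial lifting of Lemma~\ref{Lem:Connected Gamma}.
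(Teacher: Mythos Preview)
The paper does not prove this theorem: it is quoted from K\k{e}dzierski~\cite{Ked04} and used as the base case for the general recursion. What the paper \emph{does} do in Section~\ref{subsec:G-bricks for 1/r(1,r-1,1)} is, immediately after stating the theorem, write down the $2r-1$ Nakamura $G$-graphs in closed form (equation~\eqref{Eqtn:G-bricks for a=1}):
\[
\Gamma_i=\{1,y,\ldots,y^{i-1},z,\ldots,z^{r-i}\},\qquad
\Gamma_{r+i}=\{1,y,\ldots,y^{i-1},x,\ldots,x^{r-i}\},
\]
declare that $S(\Gamma_j)=\sigma_j^\vee\cap M$ by direct inspection, and invoke Ito--Nakajima for $\theta_+$-stability. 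No induction, no round down functions, no lifting: the bricks are simply exhibited and checked.

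Your proposal is therefore a genuinely different route. You want to run the machinery of Section~\ref{Sec:Main Theorem} (round down functions, Proposition~\ref{Prop:From Gamma' to Gamma:S(Gamma)=S(Gamma')}) \emph{within} the $a=1$ family, inducting on $r$ by peeling off one weighted blowup at a time. This can be made to work, but it is considerably heavier than what the paper does, and it partially inverts the logical order of the paper: the paper uses the closed-form list~\eqref{Eqtn:G-bricks for a=1} as the \emph{input} to the round down recursion, not as its output. In particular, your claim that every $\phi_3$-lift of a $G_3$-graph remains a set of genuine monomials is true in this special case but is not supplied by Lemmas~\ref{Lem:localizations,lacing}--\ref{Lem:Connected Gamma}, which allow Laurent monomials; you would have to verify it separately, at which point you are essentially reproducing the explicit list anyway.

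One further gap: your passage from $\yth\cong Y$ to $\GHilb{3}\cong Y$ needs that $\mth$ has no component other than $\yth$. Your sentence ``since every chart \ldots\ carries only genuine $G$-clusters, $\mth$ has no component beyond $\yth$'' does not establish this: knowing that the $G$-constellations parametrised by $\yth$ are $G$-clusters does not rule out torus-fixed $G$-clusters lying on a different component. You need either the irreducibility of $\GHilb{3}$ for abelian $G$ (Nakamura), or the stronger statement $D(\Gamma_j)=U(\Gamma_j)\cong\C^3$ together with an enumeration of \emph{all} Nakamura $G$-graphs.
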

For each $1\leq i < r$, set $v_i=\frac{1}{r}(i,r-i,i)$. Set $v_0=e_2$ and $v_{r}=e_3$. The toric fan corresponding to 
$\GHilb{3}$ consists of the following $2r-1$ maximal cones and their faces:
\begin{align*}
\sigma_i=\Cone (e_1, v_{i-1}, v_i) \qquad &\text{ for $1\leq i \leq r$,}\\
 \sigma_{r+i}=\Cone (e_3, v_{i-1}, v_i) \qquad &\text{ for $1\leq i \leq r-1$.}
\end{align*}
Each 3-dimensional cone has a corresponding (Nakamura's) $G$-graph:
\begin{equation}
\begin{array}{rl}\label{Eqtn:G-bricks for a=1}
\Gamma_i =\{1,y,y^2,\ldots,y^{i-1},z,z^2,\ldots,z^{r-i} \} \quad &\text{ for $1\leq i \leq r$,}\\[4pt]
\Gamma_{r+i} =\{1,y,y^2,\ldots,y^{i-1},x,x^2,\ldots,x^{r-i} \} \quad &\text{ for $1\leq i \leq r-1$,}
\end{array}
\end{equation}
with $S(\Gamma_j)=\sigma_j\dual\cap M$. As the cone $\sigma_j$ is 3-dimensional, the $G$-prebrick $\Gamma_j$ is a $G$-brick. Furthermore, $U(\Gamma_j)=D(\Gamma_j)\iso \C^3$.

By Ito-Nakajima\cite{IN}, all $G$-bricks in \eqref{Eqtn:G-bricks for a=1} are $\theta$-stable for any $\theta\in \Theta_+$ where 
\begin{equation}\label{Eqtn:Stab for G-Hilb}
\Theta_+ := \left\{\theta \in \Theta \st \theta\left(\rho\right) >0 \text{ for } \rho \neq \rho_0 \right\}.
\end{equation}
\begin{Eg}\label{Eq:G-iraffes 1/3(1,2,1)}
Let $G$ be the finite group of type $\frac{1}{3}(1,2,1)$ with eigencoordinates $\xi, \eta, \zeta$. Set $v_1=\frac{1}{3}(1,2,1)$ and $v_2=\frac{1}{3}(2,1,2)$. Recall that the economic resolution $Y$ of $X=\C^3/G$ can be obtained by the sequence of the weighted blowups:
\[
Y \stackrel{\varphi_2}{\longrightarrow} Y_1 \stackrel{\varphi_1}{\longrightarrow} X ,
\]
where $\varphi_1$ is the weighted blowup with weight $(1,2,1)$ and $\varphi_2$ is the toric morphism induced by the weighted blowup with weight $(2,1,2)$. 
The fan corresponding to 
$Y$ consists of the following five 3-dimensional cones and their faces:
\[
\begin{array}{lll}
\sigma_1=\Cone (e_1, e_2, v_1), \quad & \sigma_2=\Cone (e_1, v_1, v_2),\quad & \sigma_3=\Cone (e_1, v_2, e_3),\\[2pt]
\sigma_4=\Cone (e_3, e_2, v_1),\quad & \sigma_5=\Cone (e_3, v_1, v_2). &
\end{array}
\]
The following
\[
\begin{array}{lll}
\Gamma_1=\{1,\zeta,\zeta^2 \},  \quad &
\Gamma_2=\{1,\eta,\zeta \}, \quad  
\Gamma_3=\{1,\eta,\eta^2 \}, \\[3pt]
\Gamma_4=\{1,\xi,\xi^2 \}, \quad &
\Gamma_5=\{1,\xi,\eta \}.
\end{array}
\]
are their corresponding $G$-bricks.
\eeg
\subsection{$G$-bricks for $\frac{1}{r}(1,a,r-a)$}\label{subsec:G-bricks for 1/r(1,a,r-a)}
In this section, we assign a $G$-brick $\Gamma_{\sigma}$ with $S(\Gamma_{\sigma})=\sigma\dual\cap M$ to each maximal cone $\sigma$ in the fan of the economic resolution $Y$.

Let $G$ be the group of type $\frac{1}{r}(1,a,r-a)$ with $r$ coprime to~$a$, $X$ the quotient $\C^3/G$, and $\varphi \colon Y \rightarrow X$ the economic resolution of $X$. Then $Y$ can be covered by $U_2$, $U_3$ and $U_1$, which are the unions of the affine toric varieties corresponding to the cones on the left side of,  the right side of, and below the lattice point $v = \frac{1}{r}(1,a,r-a)$, respectively.

\begin{Prop} \label{Prop:From Gamma' to Gamma:S(Gamma)=S(Gamma')}
For $k=1,2,3$, let $\Gamma'$ be a $G_k$-brick. Define
\[
\Gamma:=\left\{\bm \in \lau \st \phi_{k}(\bm) \in \Gamma' \right\}.
\]
The set $\Gamma$ is a $G$-brick with $S(\Gamma)=S(\Gamma')$. 
\end{Prop}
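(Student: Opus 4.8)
The plan is to verify directly that $\Gamma$ is a $G$-prebrick, then to compute $S(\Gamma)$, and finally to read off the brick property from the semigroup. Four ingredients will do all the work: the $M$-equivariance $\phi_k(\bm\cdot\bn)=\phi_k(\bm)\cdot\bn$ for $\bn\in M$ (Lemma~\ref{Lem:phi(m+n)=phi(m)+(n)/phi_k induces a surjective map G dual -> G_k dual}); the statement of Lemma~\ref{Lem:localizations,lacing} that a fibre of $\phi_k$ is a chain swept out by a single ``free'' variable ($y$, $z$ or $x$ according as $k=2,3,1$); the lifting statement of Lemma~\ref{Lem:Connected Gamma}; and the elementary \emph{monotonicity} that, for $\bn\in\mon$, the ratio $\phi_k(\bn\cdot\bm)/\phi_k(\bm)$ is again an effective monomial in $M_k$, since multiplying by an effective monomial can only raise the two recorded exponents and the floored exponent of $\phi_k$.

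For the prebrick axioms, (i) is immediate from $\phi_k(\one)=\one\in\Gamma'$. In (ii), two elements of $\Gamma$ of the same $G\dual$-weight have $\phi_k$-images of the same $G_k\dual$-weight, hence equal by uniqueness in $\Gamma'$; so injectivity of $\wt$ on $\Gamma$ reduces to its injectivity on a single fibre, which follows from Lemma~\ref{Lem:localizations,lacing}(iv) with $M$-equivariance, as two monomials of equal weight in one fibre differ by an element of $\mon\cap M$ forced to be $\one$. Surjectivity of $\wt$ is checked fibre by fibre: over each $\bm'\in\Gamma'$ the chain $\phi_k^{-1}(\bm')$ realizes precisely the fibre of the induced map $\phi_k\colon G\dual\to G_k\dual$ above $\wt(\bm')$, and since $\Gamma'$ meets every $G_k\dual$-weight exactly once, $\Gamma$ then meets every $G\dual$-weight exactly once. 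Axiom (iii) transfers from $\Gamma'$: monotonicity writes $\phi_k(\bn\cdot\bm_\rho)$ and $\phi_k(\bn'\cdot\bn\cdot\bm_\rho)$ as effective multiples of $\phi_k(\bm_\rho)\in\Gamma'$, so axiom (iii) for $\Gamma'$ returns $\phi_k(\bn\cdot\bm_\rho)\in\Gamma'$, i.e.\ $\bn\cdot\bm_\rho\in\Gamma$. For connectedness (iv) I lift a fractional path in $\Gamma'$ from $\phi_k(\bm_\rho)$ to $\one$ one step at a time via Lemma~\ref{Lem:Connected Gamma} (degree-one steps lift to degree-one steps in $x,y,z$), then join the endpoint to $\one$ inside the connected chain $\phi_k^{-1}(\one)$.

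The core of the argument is $S(\Gamma)=S(\Gamma')$, and the mechanism is the intertwining identity
\[
\phi_k\big(\wtga(\bm)\big)=\wtgp\big(\phi_k(\bm)\big)\qquad(\bm\in\lau),
\]
valid because both sides lie in $\Gamma'$ and carry the same $G_k\dual$-weight. For a generator $g=\tfrac{\bn\cdot\bm_\rho}{\wtga(\bn\cdot\bm_\rho)}\in M$ of $S(\Gamma)$, applying $\phi_k$ and using $M$-equivariance together with the identity gives $g=\tfrac{\phi_k(\bn\cdot\bm_\rho)}{\wtgp(\phi_k(\bn\cdot\bm_\rho))}$, and monotonicity exhibits the numerator as an effective multiple of $\phi_k(\bm_\rho)\in\Gamma'$, so $g$ is a generator of $S(\Gamma')$; hence $S(\Gamma)\subseteq S(\Gamma')$. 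Conversely, given a border generator $\tfrac{\bg\cdot\bm'}{\wtgp(\bg\cdot\bm')}$ of $S(\Gamma')$ (Lemma~\ref{Lem:crucial finitely gen}), Lemma~\ref{Lem:Connected Gamma} supplies $\bbf\in\{x,y,z\}$ and $\bm\in\Gamma$ with $\phi_k(\bm)=\bm'$ and $\phi_k(\bbf\cdot\bm)=\bg\cdot\bm'$; the identity run backwards shows $\tfrac{\bbf\cdot\bm}{\wtga(\bbf\cdot\bm)}$ equals the chosen generator, which therefore lies in $S(\Gamma)$. Thus $S(\Gamma)=S(\Gamma')$, so $\sigma(\Gamma)=S(\Gamma)\dual=S(\Gamma')\dual=\sigma(\Gamma')$ is three-dimensional because $\Gamma'$ is a $G_k$-brick, and $\Gamma$ is a $G$-brick.

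The step I expect to be the real obstacle is the surjectivity half of (ii): matching the weights swept out along a $\phi_k$-fibre with the fibre of the induced character map $\phi_k\colon G\dual\to G_k\dual$. These character fibres have unequal sizes (of order $r/a$, rounded up or down) because of the floor function, so no uniform counting applies; one must use the explicit formula for $\phi_k$ to see that the relevant weights form a contiguous arithmetic progression of step $a$ (resp.\ $r-a$, $1$) and invoke $\gcd(r,a)=1$ to rule out wrap-around collisions. By contrast, once the intertwining identity is in hand the semigroup equality is essentially formal, and the remaining axioms (i), (iii), (iv) are routine.
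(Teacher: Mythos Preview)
Your proof is correct and follows essentially the same route as the paper, down to the intertwining identity $\phi_k(\wtga(\bm))=\wtgp(\phi_k(\bm))$ (stated there as a separate lemma) and the use of Lemmas~\ref{Lem:crucial finitely gen} and~\ref{Lem:Connected Gamma} for the two semigroup inclusions. The one place the paper is slicker is your anticipated obstacle, surjectivity in (ii): rather than analyse fibre lengths and arithmetic progressions, the paper simply picks any $\bm$ of weight $\rho$, sets $\chi=\phi_k(\rho)$, and notes that $\bm\cdot\big(\bk_\chi/\phi_k(\bm)\big)$ lies in $\Gamma$ and has weight $\rho$, since the ratio $\bk_\chi/\phi_k(\bm)$ is $G_k$-invariant and hence already in $M$, so $M$-equivariance applies directly---no coprimality or chain-counting needed.
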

\begin{proof}
Since $\phi_{k}(\one)=\one \in \Gamma'$, we have $\one \in \Gamma$. To show that $\Gamma$ satisfies (ii) in Definition~\ref{Def:G-prebrick}, we need to show that there exists a unique monomial of weight $\rho$ in $\Gamma$ for each $\rho \in G\dual$.
Fix a positive integer $i$ such that the weight of $x^i$ is $\rho$. Consider the monomial $\phi_{k}(x^i)$ in $M_{k}$ and its weight $\chi\in G_{k}\dual$. Since $\Gamma'$ is a $G_{k}$-brick, there exists a unique element $\bk_{\chi}$ of weight $\chi$. Since the $G_k$-invariant monomial $\frac{\bk_{\chi}}{\phi_{k}(x^i)}$ is in the lattice $M$, it follows from Lemma~\ref{Lem:phi(m+n)=phi(m)+(n)/phi_k induces a surjective map G dual -> G_k dual} that
\[
\phi_{k}\colon x^i \cdot \Big(\frac{\bk_{\chi}}{\phi_{2}(x^i)} \Big) \mapsto \bk_{\chi},
\]
i.e.\ $x^i \cdot \left(\frac{\bk_{\chi}}{\phi_{2}(x^i)} \right)$ is in $\Gamma$. To show uniqueness, assume that two monomials $\bm,\bm'$ of the same weight are mapped into $\Gamma'$. As $\phi_{k}(\bm)$ and $\phi_{k}(\bm')$ are of the same weight, we have $\phi_{k}(\bm)=\phi_{k}(\bm')\in \Gamma'$. From Lemma~\ref{Lem:phi(m+n)=phi(m)+(n)/phi_k induces a surjective map G dual -> G_k dual},
\begin{equation*}
\phi_{k}(\bm)=\phi_{k}\left(\bm' \cdot\frac{\bm}{\bm'}\right)=\phi_{k}(\bm')\cdot\frac{\bm}{\bm'},
\end{equation*}
and hence $\bm=\bm'$.
From Lemma~\ref{Lem:Connected Gamma}, it follows that $\Gamma$ is connected as $\Gamma'$ is connected.

For (iii) in Definition~\ref{Def:G-prebrick}, assume that $\bn' \cdot \bn \cdot \bm_{\rho} \in \Gamma$ for $\bm_{\rho}\in \Gamma$ and $\bn, \bn' \in \mon$. We need to show $\phi_{k}(\bn \cdot \bm_{\rho})\in\Gamma'$. From
\[
\phi_{k}(\bn' \cdot \bn \cdot \bm_{\rho})=
\frac{\phi_{k}(\bn' \cdot \bn \cdot \bm_{\rho})}{\phi_{k}(\bn \cdot \bm_{\rho})} 
\cdot 
\frac{\phi_{k}(\bn \cdot \bm_{\rho})}{\phi_{k}(\bm_{\rho})}
\cdot 
{\phi_{k}(\bm_{\rho})}
\in \Gamma',
\]
it follows that $\phi_{k}(\bn \cdot \bm_{\rho})=\frac{\phi_{k}(\bn \cdot \bm_{\rho})}{\phi_{k}(\bm_{\rho})}
\cdot 
{\phi_{k}(\bm_{\rho})}$ is in $\Gamma'$ because $\Gamma'$ is a $G_k$-prebrick. This proves that $\Gamma$ is a $G$-prebrick.

It remains to prove that $S(\Gamma)=S(\Gamma')$.
Note that $S(\Gamma)$ is generated by $\frac{\bn \cdot \bm_{\rho}}{\wtga(\bn \cdot \bm_{\rho})}$ for $\bn \in \mon$ and $\bm_{\rho} \in \Gamma$. Let $\bn$ be a genuine monomial in $\mon$ and $\bm_{\rho}$ an element in $\Gamma$. Let $\bk_{\chi}$ denote $\phi_{k}(\bm_{\rho}) \in \Gamma'$. Define $\bk$ to be $\frac{\phi_{k}(\bn\cdot \bm_{\rho})}{\phi_{k}(\bm_{\rho})}$. From the definition of the round down functions, we know that $\bk$ is a genuine monomial in $\xi, \eta, \zeta$. Since $\frac{\bn \cdot \bm_{\rho}}{\wtga(\bn \cdot \bm_{\rho})}$ is $G$-invariant, it follows that
\[
\frac{\bn \cdot \bm_{\rho}}{\wtga(\bn \cdot \bm_{\rho})} 
= \frac{\phi_{k}(\bn \cdot \bm_{\rho})}{\phi_{k}\big(\wtga(\bn \cdot \bm_{\rho})\big)}
= \frac{\frac{\phi_{k}(\bn \cdot \bm_{\rho})}{\phi_{k}( \bm_{\rho})} \cdot \phi_{k}(\bm_{\rho})}{\phi_{k}\big(\wtga(\bn \cdot \bm_{\rho})\big)}
=\frac{\bk \cdot \bk_{\chi}}{\wtgp(\bk \cdot \bk_{\chi})}
\]
from Lemma~\ref{Lem:phi(m+n)=phi(m)+(n)/phi_k induces a surjective map G dual -> G_k dual}.
This proves $S(\Gamma) \subset S(\Gamma')$.

For the opposite inclusion, by Lemma~\ref{Lem:crucial finitely gen}, it suffices to show that $\frac{\bg \cdot \bk_{\chi}}{\wtgp(\bg \cdot \bk_{\chi})}$ is in $S(\Gamma)$ for all $\bg\in\{\xi,\eta,\zeta\}$ and $\bk_{\chi}\in\Gamma'$.
By Lemma~\ref{Lem:Connected Gamma} there are $\bn \in \mon$, $\bm_{\rho} \in \Gamma$ such that $\phi_{k}(\bn \cdot \bm_{\rho})=\bg\cdot\bk_{\chi}$. Lemma~\ref{Lem:wt phi2(m mrho) = phi2 (wt (m mrho)} implies that $\wtgp(\bg \cdot \bk_{\chi})=\phi_{k}\big(\wtga(\bn \cdot \bm_{\rho})\big)$. Thus
\[
\frac{\bg \cdot \bk_{\chi}}{\wtgp(\bg \cdot \bk_{\chi})} 
=\frac{\phi_{k}(\bn \cdot \bm_{\rho})}{\wtgp\big(\phi_{k}(\bn \cdot \bm_{\rho})\big)}
= \frac{\phi_{k}(\bn \cdot \bm_{\rho})}{\phi_{k}\big(\wtga(\bn \cdot \bm_{\rho})\big)}
=\frac{\bn \cdot \bm_{\rho}}{\wtga(\bn \cdot \bm_{\rho})},
\]
and we proved the proposition.
\end{proof}
\begin{Lem}\label{Lem:wt phi2(m mrho) = phi2 (wt (m mrho)}
With the notation as in Proposition~\ref{Prop:From Gamma' to Gamma:S(Gamma)=S(Gamma')}, if $\bm \in \lau$, then
\[
\wtgp\big(
\phi_k(\bm)\big)=\phi_k\big(\wtga(\bm)\big).
\]
\end{Lem}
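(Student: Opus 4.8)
The plan is to reduce the identity to the uniqueness of weight representatives in the $G_k$-prebrick $\Gamma'$, using the fact that $\phi_k$ descends to weights as recorded in Lemma~\ref{Lem:phi(m+n)=phi(m)+(n)/phi_k induces a surjective map G dual -> G_k dual}. First I would check that the right-hand side $\phi_k(\wtga(\bm))$ is a legitimate element of $\Gamma'$. By definition $\wtga(\bm)$ is the unique element of $\Gamma$ carrying the same $G$-weight as $\bm$, and since $\Gamma$ is constructed in Proposition~\ref{Prop:From Gamma' to Gamma:S(Gamma)=S(Gamma')} precisely as $\{\bm \in \lau \st \phi_k(\bm) \in \Gamma'\}$, the membership $\wtga(\bm) \in \Gamma$ immediately yields $\phi_k(\wtga(\bm)) \in \Gamma'$. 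Note that the lemma is invoked only after $\Gamma$ has already been shown to be a $G$-prebrick, so $\wtga$ is well-defined and there is no circularity.

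Second, I would verify that $\phi_k(\wtga(\bm))$ and $\phi_k(\bm)$ share the same $G_k$-weight. Since $\wtga(\bm)$ and $\bm$ have equal $G$-weight, their quotient $\frac{\wtga(\bm)}{\bm}$ lies in the kernel of $\wt \colon \lau \to G\dual$, that is, in $M$. Writing $\wtga(\bm) = \bm \cdot \frac{\wtga(\bm)}{\bm}$ and applying the multiplicativity $\phi_k(\bm \cdot \bn) = \phi_k(\bm) \cdot \bn$ of Lemma~\ref{Lem:phi(m+n)=phi(m)+(n)/phi_k induces a surjective map G dual -> G_k dual}, I obtain $\phi_k(\wtga(\bm)) = \phi_k(\bm) \cdot \frac{\wtga(\bm)}{\bm}$. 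Because $M$ sits inside $M_k$ as the sublattice of $G_k$-invariant monomials, the correction factor $\frac{\wtga(\bm)}{\bm}$ is $G_k$-invariant, so the two images carry exactly the same $G_k$-weight.

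Finally, I would invoke condition (ii) of Definition~\ref{Def:G-prebrick} for the $G_k$-prebrick $\Gamma'$: there is a unique element of $\Gamma'$ of each prescribed $G_k$-weight. By definition $\wtgp(\phi_k(\bm))$ is the representative in $\Gamma'$ of the $G_k$-weight of $\phi_k(\bm)$, while the two preceding steps exhibit $\phi_k(\wtga(\bm))$ as an element of $\Gamma'$ carrying that very weight. Uniqueness then forces $\phi_k(\wtga(\bm)) = \wtgp(\phi_k(\bm))$, which is the claim. There is no serious obstacle here; the only point demanding care is the bookkeeping of which lattice the $G$-invariant correction factor inhabits, namely that $\frac{\wtga(\bm)}{\bm} \in M \subset M_k$ is genuinely $G_k$-invariant, so that the weight equality of the second step is justified.
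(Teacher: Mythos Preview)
Your proof is correct and follows essentially the same approach as the paper's: show that $\phi_k(\wtga(\bm))\in\Gamma'$ via the definition $\Gamma=\phi_k^{-1}(\Gamma')$, show it has the same $G_k$-weight as $\phi_k(\bm)$ via Lemma~\ref{Lem:phi(m+n)=phi(m)+(n)/phi_k induces a surjective map G dual -> G_k dual}, and conclude by uniqueness in $\Gamma'$. The only difference is cosmetic: where the paper cites the induced map $\phi_k\colon G\dual\to G_k\dual$ from that lemma directly, you unwind its proof by writing out the $G$-invariant correction factor $\frac{\wtga(\bm)}{\bm}\in M\subset M_k$ explicitly.
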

\begin{proof}
Since $\phi_k(\bm)$ is of the same weight as $\phi_k\big(\wtga(\bm)\big)$ by Lemma~\ref{Lem:phi(m+n)=phi(m)+(n)/phi_k induces a surjective map G dual -> G_k dual}, the assertion follows from the fact that $\phi_k\big(\wtga(\bm)\big)\in\Gamma'$.
\end{proof}
\begin{Lem}
With the notation as in Proposition~\ref{Prop:From Gamma' to Gamma:S(Gamma)=S(Gamma')}, let $\bm$ be the Laurent monomial of weight $j$ in $\Gamma=\left\{\bm \in \lau \st \phi_{k}(\bm) \in \Gamma' \right\}$.
\begin{enumerate}
\item If $k=2$ and $0\leq j<r-a$, then $\phi_{2}(y\cdot \bm) \in \Gamma$.
\item If $k=3$ and $0\leq j<a$, then $\phi_{3}(z\cdot \bm) \in \Gamma$.
\item If $k=1$ and $0\leq j<r-1$, then $\phi_{1}(x\cdot \bm) \in \Gamma$.
\end{enumerate}
\end{Lem}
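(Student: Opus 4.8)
The plan is to read off all three statements directly from the localization identities of Lemma~\ref{Lem:localizations,lacing}, combined with the very definition of $\Gamma$. The essential point is that the claim concerns the \emph{image} $\phi_k(\bbf\cdot\bm)$, which is an element of the lattice $M_k$ in which the $G_k$-brick $\Gamma'$ lives; it is not a statement about the product $\bbf\cdot\bm$ lying in $\Gamma$. So rather than trying to locate $\bbf\cdot\bm$ itself, I would compute where its $\phi_k$-image lands, and the target set for that image is $\Gamma'$.

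First I would record the trivial half: since $\bm$ is the element of $\Gamma=\{\bm\in\lau \st \phi_k(\bm)\in\Gamma'\}$ of weight $j$, its image $\phi_k(\bm)$ already lies in $\Gamma'$ by the defining property of $\Gamma$. Then, treating case (i), I observe that the hypothesis $0\le j<r-a$ on the weight of $\bm$ is precisely the hypothesis of Lemma~\ref{Lem:localizations,lacing}(i), which tells us that multiplication by $y$ does not move the image, i.e.\ $\phi_2(y\cdot\bm)=\phi_2(\bm)$. Chaining these two facts gives $\phi_2(y\cdot\bm)=\phi_2(\bm)\in\Gamma'$, which is the asserted membership.

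Cases (ii) and (iii) follow by the identical mechanism with the roles of the three variables permuted: the weight bounds $0\le j<a$ and $0\le j<r-1$ are exactly the hypotheses of parts (ii) and (iii) of Lemma~\ref{Lem:localizations,lacing}, yielding $\phi_3(z\cdot\bm)=\phi_3(\bm)$ and $\phi_1(x\cdot\bm)=\phi_1(\bm)$ respectively, and in each case the right-hand side lies in $\Gamma'$ because $\bm\in\Gamma$.

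I do not expect any real obstacle: the entire content is the observation that, as long as the weight of $\bm$ stays below the relevant threshold, multiplying by the distinguished variable $y$, $z$ or $x$ leaves the $\phi_k$-image unchanged, so that image is forced to remain the element $\phi_k(\bm)$ of $\Gamma'$. The one point to be careful about is not to overstate this collapse: the identity $\phi_k(\bbf\cdot\bm)=\phi_k(\bm)$ holds only within the weight range recorded in Lemma~\ref{Lem:localizations,lacing} and fails once the weight crosses the threshold, so the hypotheses on $j$ are exactly what is needed and cannot be dropped.
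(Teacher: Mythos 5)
Your proposal is correct and is precisely the paper's argument: the paper's entire proof is the one line ``Lemma~\ref{Lem:localizations,lacing} implies the assertion,'' and you have simply spelled out that expansion --- $\phi_k(\bm)\in\Gamma'$ by the definition of $\Gamma$, and the weight hypotheses are exactly those of Lemma~\ref{Lem:localizations,lacing}(i)--(iii), giving $\phi_k(\bbf\cdot\bm)=\phi_k(\bm)\in\Gamma'$. You were also right to flag the type mismatch in the statement (the image $\phi_k(\bbf\cdot\bm)$ lives in $M_k$, so the target should be $\Gamma'$, equivalently $\bbf\cdot\bm\in\Gamma$), and your argument establishes both equivalent readings.
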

\begin{proof}
Lemma~\ref{Lem:localizations,lacing} implies the assertion.
\end{proof}

\begin{Prop}\label{Prop:there exists a set of G-bricks}
Let $G$ be the group of type $\frac{1}{r}(1,a,r-a)$ with $r$ coprime to $a$. Let $\mco$ be the set of maximal cones in the fan of the economic resolution $Y$ of $X=\C^3/G$. Then there exists a set $\gr(r,a)$ of $G$-bricks such that there is a bijective map $\mco \rightarrow \gr(r,a)$ sending $\sigma$ to $\Gamma_{\sigma}$ satisfying $S(\Gamma_{\sigma})=\sigma\dual \cap M$. In particular, $U(\Gamma_\sigma)$ is isomorphic to the smooth toric variety $U_{\sigma}=\Spec \C[\sigma\dual \cap M]$ corresponding to $\sigma$.
\end{Prop}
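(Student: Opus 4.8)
The plan is to induct on the order $r=\lvert G\rvert$, exploiting the recursive structure of the economic resolution together with Proposition~\ref{Prop:From Gamma' to Gamma:S(Gamma)=S(Gamma')}, which lifts a $G_k$-brick $\Gamma'$ to a $G$-brick $\Gamma=\{\bm\in\lau \st \phi_k(\bm)\in\Gamma'\}$ with $S(\Gamma)=S(\Gamma')$. First I would settle the base case $r=1$: here $G$ is trivial, $Y=X=\C^3$, the set $\mco$ is the single cone $\sigma_{+}$, and $\Gamma_{\sigma_+}=\{\one\}$ works, with $S(\Gamma_{\sigma_+})=\sigma_+\dual\cap M$. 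For $r\geq 2$, I would use the decomposition from Section~\ref{subsec:Econ resoln}: blowing up at $v=\frac1r(1,a,r-a)$ splits $Y$ into the smooth chart $\sigma_1=\Cone(v,e_2,e_3)$ (smooth because $G_1=L/L_1$ is trivial), the chart over $\sigma_2$ that is isomorphic to the economic resolution of $\lgr$ with group $G_2$, and the chart over $\sigma_3$ isomorphic to the economic resolution of $\rgr$ with group $G_3$. Consequently $\mco$ is the disjoint union of $\{\sigma_1\}$, the maximal cones lying inside $\sigma_2$, and those lying inside $\sigma_3$.

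Next I would check that the inductive hypothesis applies to the two sub-charts. Reducing the weights of $\lgr$ modulo $a$ and of $\rgr$ modulo $r-a$, and using $\gcd(r,a)=1$, one sees these are again terminal quotient singularities of standard type $\frac1a(1,a',a-a')$ and $\frac{1}{r-a}(1,a'',(r-a)-a'')$ with $\gcd(a,a')=\gcd(a,r)=1$ and $\gcd(r-a,a'')=\gcd(r-a,r)=1$. Since $a<r$ and $r-a<r$, induction furnishes sets of $G_2$-bricks and $G_3$-bricks in bijection with the maximal cones $\sigma'$ of the corresponding economic resolutions, each $\Gamma'$ satisfying $S(\Gamma')=\sigma'\dual\cap M$; here the relevant invariant lattice is literally the same $M=L\dual$, since by Lemma~\ref{Lem:phi(m+n)=phi(m)+(n)/phi_k induces a surjective map G dual -> G_k dual} it is exactly the lattice of $G_k$-invariant monomials inside $M_k$.

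Finally I would lift each of these through the round down functions. For a maximal cone $\sigma$ in the $\sigma_2$-region with associated $G_2$-brick $\Gamma'$ I set $\Gamma_\sigma=\{\bm\in\lau \st \phi_2(\bm)\in\Gamma'\}$, for the $\sigma_3$-region I use $\phi_3$, and for $\sigma_1$ I apply $\phi_1$ to the trivial $G_1$-brick. Proposition~\ref{Prop:From Gamma' to Gamma:S(Gamma)=S(Gamma')} then guarantees that each $\Gamma_\sigma$ is a $G$-brick with $S(\Gamma_\sigma)=S(\Gamma')=\sigma\dual\cap M$, whence $U(\Gamma_\sigma)=\Spec\C[\sigma\dual\cap M]=U_\sigma$ is smooth. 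Dualizing recovers $\sigma$ from $S(\Gamma_\sigma)$, so $\sigma\mapsto\Gamma_\sigma$ is injective, and taking $\gr(r,a)$ to be its image makes it a bijection. A reassuring consistency check is the count of maximal cones: $1+(2a-1)+(2(r-a)-1)=2r-1$, which matches the number of maximal cones of the fan of $Y$.

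The hard part will not be the lifting step itself — that is a direct application of Proposition~\ref{Prop:From Gamma' to Gamma:S(Gamma)=S(Gamma')} — but the bookkeeping that makes the induction honest: confirming that the fan of $Y$ restricted to $\sigma_2$ (resp.\ $\sigma_3$) coincides exactly with the fan of the economic resolution of $\C^3/G_2$ (resp.\ $\C^3/G_3$), so that $\sigma_1$ is a single untouched maximal cone and the cone $\sigma'$ produced by induction is the very same cone $\sigma\in\mco$. This fan-decomposition of the economic resolution is the essential geometric input from Section~\ref{subsec:Econ resoln}; granting it, the identification $S(\Gamma_\sigma)=\sigma\dual\cap M$ and the bijectivity fall out formally.
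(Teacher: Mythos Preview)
Your proposal is correct and follows essentially the same inductive strategy as the paper: decompose the fan of $Y$ into the single cone $\sigma_1=\Cone(v,e_2,e_3)$ below $v$ and the two sub-fans on either side, then lift $G_k$-bricks through $\phi_k$ via Proposition~\ref{Prop:From Gamma' to Gamma:S(Gamma)=S(Gamma')}. The only cosmetic differences are that the paper takes $a=1$ or $r-1$ (Section~\ref{subsec:G-bricks for 1/r(1,r-1,1)}) as its base case rather than $r=1$, and it handles the cone below $v$ by writing out $\Gamma=\phi_1^{-1}(\one)=\{1,x,\ldots,x^{r-1}\}$ explicitly rather than invoking Proposition~\ref{Prop:From Gamma' to Gamma:S(Gamma)=S(Gamma')} for the trivial $G_1$-brick; your uniform treatment is arguably cleaner.
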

\begin{proof}
From Section~\ref{subsec:G-bricks for 1/r(1,r-1,1)}, the assertion holds when $a=1\text{ or }r-1$. We use induction on $r$ and $a$.

Let $\sigma$ be a 3-dimensional cone in the fan of the economic resolution $Y$ of $X=\C^3/G$. 
For $v=\frac{1}{r}(1,a,r-a)$, we have three cases:
\begin{enumerate}
\item[(1)] the cone $\sigma$ is below the vector $v$.
\item[(2)] the cone $\sigma$ is on the left side of the vector $v$.
\item[(3)] the cone $\sigma$ is on the right side of the vector $v$.
\end{enumerate}

\vskip 2mm
\paragraph*{\bf Case (1) the cone $\sigma$ is below the vector $v$.}
Since there is a unique 3-dimensional cone below $v$, $\sigma=\Cone(v,e_2,e_3)$. 
Consider the central round down function $\phi_{1}$ of the weighted blowup with weight $(1,a,r-a)$. For $\bm=x^{m_1}y^{m_2}z^{m_3} \in \lau$, note that
\[
\phi_{1}(\bm)= \one \quad \text{if and only if} \quad  
m_2=m_3=0 \text{ and } 0 \leq \tfrac{m_1}{r} < 1.
\]
The set $\Gamma:=\phi_{1}\inv(\one)=\{1,x,x^2,\ldots,x^{r-1}\}$ is a $G$-prebrick with the property $S(\Gamma)=\sigma\dual \cap M$. Since the corresponding cone $\sigma(\Gamma)$ is equal to $\sigma$, the $G$-prebrick $\Gamma$ is a $G$-brick.
\vskip 2mm
\paragraph*{\bf Case (2) the cone $\sigma$ is on the left side of $v$.} From the fan of the economic resolution, it follows that $U_2$ is isomorphic to the economic resolution $Y_{2}$ of $\lgr$ with eigencoordinates $\xi,\eta,\zeta$. There exists a unique 3-dimensional cone $\sigma'$ in the toric fan of $Y_{2}$ corresponding to $\sigma$. Let $G_2$ be the group of type $\lgr$. Note that $a$ is strictly less than $r$ so that we can use induction.

Assume that there exists a $G_2$-brick $\Gamma'$ with $S(\Gamma')=(\sigma')\dual \cap M$. By Proposition~\ref{Prop:From Gamma' to Gamma:S(Gamma)=S(Gamma')}, there is a $G$-brick $\Gamma$ with $S(\Gamma)=S(\Gamma')=\sigma\dual \cap M$.
\vskip 2mm
\paragraph{\bf Case (3) the cone $\sigma$ is on the right side of $v$.} The case where the cone $\sigma$ is on the right side of $v$ can be proved similarly.
\end{proof}
\begin{Def}
A $G$-brick $\Gamma$ in $\gr(r,a)$ described above is called a {\em Danilov $G$-brick}. 
\end{Def}
\begin{Prop}\label{Prop:not change deformation space}
With the notation as is in Proposition~\ref{Prop:From Gamma' to Gamma:S(Gamma)=S(Gamma')}, we have $D(\Gamma') \iso D(\Gamma)$. Moreover we have a commutative diagram
\[
\begin{array}{ccc}
U(\Gamma') & \stackrel{\iso}{\longrightarrow} & U(\Gamma)\\[4pt]
\Big\downarrow&&\Big\downarrow\\[4pt]
D(\Gamma') & \stackrel{\iso}{\longrightarrow} & D(\Gamma)
\end{array}
\]
with the vertical morphisms closed embeddings. Therefore for a $G$-brick $\Gamma\in \gr(r,a)$, we have $U(\Gamma)=D(\Gamma)\iso \C^3$.
\end{Prop}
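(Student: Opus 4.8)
The plan is to build the isomorphism $D(\Gamma')\iso D(\Gamma)$ explicitly by transporting module structures through the round down function $\phi_k$, and then to extract the commutative square and the equality $U(\Gamma)=D(\Gamma)$ from $S(\Gamma)=S(\Gamma')$ together with an induction. I write $\bk_\chi:=\phi_k(\bm_\rho)\in\Gamma'$ for $\chi=\phi_k(\rho)$, and let $\{\xi,\eta,\zeta\}$ be the eigencoordinates of the $G_k$-action, so that $D(\Gamma')$ carries parameters $\{\xi_\chi,\eta_\chi,\zeta_\chi\}$. A point of $D(\Gamma)$ is a $\C[x,y,z]$-module structure on $\langle\Gamma\rangle$ and a point of $D(\Gamma')$ a $\C[\xi,\eta,\zeta]$-module structure on $\langle\Gamma'\rangle$; the round down dictionary, comparing $\phi_k(\bbf\cdot\bm_\rho)$ with $\phi_k(\bm_\rho)=\bk_\chi$, is what identifies the two, and I would package it as a $\C$-algebra map between the coordinate rings of \eqref{Eqtn:coordinate ring of D(Gamma)}.

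First I would describe the fibres of the surjection $\phi_k\colon\Gamma\to\Gamma'$, $\bm_\rho\mapsto\bk_{\phi_k(\rho)}$. By Lemma~\ref{Lem:localizations,lacing} each fibre is a chain generated by the distinguished variable of the blowup ($y$ for $k=2$, $z$ for $k=3$, $x$ for $k=1$), and inside a fibre the corresponding deformation parameter is rigid, i.e.\ it lies in $\Lambda(\Gamma)$ and is set to $1$, because $\phi_k$ is constant in the relevant weight range. For $k=2$ this says $y_\rho=1$ whenever $\wt(\bm_\rho)<r-a$; at the top of a fibre one instead has $\phi_2(y\cdot\bm_\rho)=\eta\cdot\bk_\chi$, and this boundary parameter is the one to be matched with $\eta_\chi$. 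The relations $\phi_2(x\cdot\bm_\rho)=\xi\cdot\bk_\chi$ (resp.\ $\xi\eta\cdot\bk_\chi$ when $\wt(\bm_\rho)=r-1$) and $\phi_2(z\cdot\bm_\rho)=\zeta\cdot\bk_\chi$ (resp.\ $\zeta\eta\cdot\bk_\chi$ when $\wt(\bm_\rho)\geq a$) record where the floor function jumps, and hence where a single-variable action in $\Gamma$ becomes a product in $\Gamma'$.

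The crux is to show that the commutation relations \eqref{Eqtn:the conmmutative relations} render the $3r$ parameters of $D(\Gamma)$ constant along each $\phi_k$-fibre, up to these explicit $\eta$-corrections. Concretely, evaluating $x_\rho\,y_{\wt(x\cdot\bm_\rho)}=y_\rho\,x_{\wt(y\cdot\bm_\rho)}$ on an internal step of a $y$-chain, where $y_\rho=1$ and $y_{\wt(x\cdot\bm_\rho)}=1$, yields $x_{\rho+a}=x_\rho$, and likewise for $z$; so $\C[D(\Gamma)]$ is generated by one $x$-, one $y$-, and one $z$-parameter per $\chi\in G_k\dual$. I would then define $\psi^\#\colon\C[D(\Gamma)]\to\C[D(\Gamma')]$ by sending the surviving $x$-, boundary $y$-, and $z$-parameter over $\bk_\chi$ to $\xi_\chi$, $\eta_\chi$, $\zeta_\chi$ (inserting the $\eta$-factors flagged above), and its inverse by the reverse substitution, using Lemma~\ref{Lem:phi(m+n)=phi(m)+(n)/phi_k induces a surjective map G dual -> G_k dual} and Lemma~\ref{Lem:wt phi2(m mrho) = phi2 (wt (m mrho)} to check that every relation of $D(\Gamma)$ maps to a relation of $D(\Gamma')$ and conversely. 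The main obstacle is precisely this bookkeeping: the floor function forces one to track all weight ranges so that the boundary $\eta$-corrections are accounted for and no commutation relation is lost or spuriously created.

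Finally, for the square I would verify compatibility with the closed embeddings $U\hookrightarrow D$ of Proposition~\ref{Prop:open immersion}: both quotient maps $\C[D(\Gamma)]\to\C[S(\Gamma)]$ and $\C[D(\Gamma')]\to\C[S(\Gamma')]$ send a parameter to $\frac{\bbf\cdot\bm_\rho}{\wtga(\bbf\cdot\bm_\rho)}$ (resp.\ its $\Gamma'$-analogue), and these agree through $\psi^\#$ exactly by the computation $S(\Gamma)=S(\Gamma')$ of Proposition~\ref{Prop:From Gamma' to Gamma:S(Gamma)=S(Gamma')}; this produces the diagram with both horizontals isomorphisms and both verticals closed embeddings. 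The equality $U(\Gamma)=D(\Gamma)\iso\C^3$ then follows by induction along the recursion of Proposition~\ref{Prop:there exists a set of G-bricks}: in the base cases (the central cone $\Gamma=\{1,x,\dots,x^{r-1}\}$ and the cases $a=1$ or $r-1$ from Section~\ref{subsec:G-bricks for 1/r(1,r-1,1)}) one has $U=D\iso\C^3$ directly, and in the inductive step the commutative square transports the equality $U(\Gamma')=D(\Gamma')$ to $U(\Gamma)=D(\Gamma)$, while $U(\Gamma)\iso\C^3$ since $S(\Gamma)=\sigma\dual\cap M$ for the smooth cone $\sigma$.
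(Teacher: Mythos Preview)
Your approach matches the paper's: build mutually inverse $\C$-algebra maps between $\C[D(\Gamma)]$ and $\C[D(\Gamma')]$ by transporting parameters through $\phi_k$, handle the degree-$2$ ratios $\phi_k(\bbf\cdot\bm_\rho)/\phi_k(\bm_\rho)$ by factoring through the distinguished variable (your ``$\eta$-corrections'' are the paper's Lemma~\ref{Lem:no change of def spaces-inside}), and finish by induction from the $a=1$ base case of Section~\ref{subsec:G-bricks for 1/r(1,r-1,1)}.

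One point you assert too quickly is that the commutation relations force $x_{\rho+a}=x_\rho$ along an internal $y$-step. Your equation $x_\rho\,y_{\wt(x\cdot\bm_\rho)}=y_\rho\,x_{\wt(y\cdot\bm_\rho)}$ with $y_\rho=1$ only yields this if also $y_{\wt(x\cdot\bm_\rho)}=1$, i.e.\ if $y\cdot\wtga(x\cdot\bm_\rho)\in\Gamma$. This is \emph{not} automatic from $\bm_\rho$ being internal to its own fibre; it is a statement about the fibre containing $\wtga(x\cdot\bm_\rho)$, which could a priori sit at its top. The paper proves exactly this missing step: if $\phi_k(\bm_\rho)=\phi_k(\bm_{\rho'})$ with $\bm_{\rho'}=\bn\cdot\bm_\rho$ for $\bn\in\mon$, then $\bn\cdot\wtga(\bbf\cdot\bm_\rho)\in\Gamma$, shown by computing
\[
\phi_k\big(\bn\cdot\wtga(\bbf\cdot\bm_\rho)\big)
=\phi_k(\bbf\cdot\bm_{\rho'})\cdot\frac{\wtga(\bbf\cdot\bm_\rho)}{\bbf\cdot\bm_\rho}
=\phi_k(\bbf\cdot\bm_\rho)\cdot\frac{\wtga(\bbf\cdot\bm_\rho)}{\bbf\cdot\bm_\rho}
=\phi_k\big(\wtga(\bbf\cdot\bm_\rho)\big),
\]
using that $\frac{\wtga(\bbf\cdot\bm_\rho)}{\bbf\cdot\bm_\rho}\in M$ and that $\phi_k(\bbf\cdot\bm_{\rho'})=\phi_k(\bbf\cdot\bm_\rho)$. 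Once this is in place your collapse of parameters along fibres goes through, and the remainder of your outline (the commutative square via $S(\Gamma)=S(\Gamma')$, and the induction) is correct and agrees with the paper.
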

\begin{proof}
Let $\Gamma$ be a $G$-brick and $\Gamma'$ the corresponding $G_k$-brick. Let $\xi,\eta,\zeta$ denote the eigencoordinate for the $G_k$-action. 
From~\eqref{Eqtn:coordinate ring of D(Gamma)}, the coordinate rings of the affine schemes $D(\Gamma)$, $D(\Gamma')$ are
\begin{align*}
\C[D(\Gamma)]&=\C[x_{\rho}, y_{\rho}, z_{\rho} \st \rho \in G\dual]\big{/}I_{\Gamma},\\
\C[D(\Gamma')]&=\C[\xi_{\chi}, \eta_{\chi}, \zeta_{\chi} \st \chi \in G_k\dual]\big{/}I_{\Gamma'}
\end{align*}
where the ideal $I_{\Gamma}$ is $\big\langle\text{the quadrics in \eqref{Eqtn:the conmmutative relations}, $\bbf_{\rho}-1 \st \bbf_{\rho} \in \Lambda(\Gamma)$}\big\rangle$ and the ideal $I_{\Gamma'}$ is $\big\langle \text{the commutative relations, } \bg_{\chi}-1 \st \bg_{\chi} \in \Lambda(\Gamma')\big\rangle$.

By Lemma~\ref{Lem:Connected Gamma}, we have an algebra epimorphism
\[
\mu \colon \C[x_{\rho}, y_{\rho}, z_{\rho} \st \rho \in G\dual] \rightarrow \C[D(\Gamma')] 
\quad \bbf_{\rho} \mapsto\bk_{(\chi)}
\]
defined as follows on the $3r$ generators $\bbf_{\rho}\in\{x_{\rho}, y_{\rho}, z_{\rho}\}$. Let $\bm_{\rho}$ be the unique element of weight $\rho$ in $\Gamma$ and $\chi$ the weight of $\phi_k(\bbf\cdot\bm_{\rho})$. Then $\bk:=\frac{\phi_k(\bbf\cdot\bm_{\rho})}{\phi_k(\bm_{\rho})}$ is a monomial, so $\bk$ induces a linear map $\bk_{(\chi)}$ on the vector space $\C\cdot\phi_k(\bm_{\rho})$.
Then $\mu$ is the morphism sending $\bbf_{\rho}$ to $\bk_{(\chi)}$.
Since the generators of $I_{\Gamma}$ are in $\ker\mu$, $\mu$ induces an epimorphism $\overline{\mu}\colon \C[D(\Gamma)]\rightarrow \C[D(\Gamma')]$.

To construct the inverse of $\overline{\mu}$, first we show that if $\mu(\bbf_{\rho})=\mu(\bbf'_{\rho'})$, then $\bbf_{\rho} \equiv \bbf'_{\rho'} \mod I_{\Gamma}$.
If $\mu(\bbf_{\rho})=\mu(\bbf'_{\rho'})$, then $\phi_k(\bbf\cdot\bm_{\rho})=\phi_k(\bbf'\cdot\bm_{\rho'})$ and 
$\phi_k(\bm_{\rho})=\phi_k(\bm_{\rho'})$. Since both $\bbf_{\rho}$ and $\bbf'_{\rho'}$ are degree of 1, $\bbf=\bbf'$.
By (iv) in Lemma~\ref{Lem:localizations,lacing}, we may assume that $\bm_{\rho'}=\bn\cdot\bm_{\rho}$ for some $\bn\in\mon$.
Since $\phi_k(\bm_{\rho})=\phi_k(\bm_{\rho'})$, $\bn$ induces a linear map equal to 1 on $\bm_{\rho}$, i.e.\ $\bm_{(\rho)}\equiv 1 \mod I_{\Gamma}$ because $\bm_{\rho'}=\bn\cdot\bm_{\rho}$ is a base.
From the following commutative diagram
\[
\begin{array}{ccc}
\C\cdot\bm_{\rho} &\stackrel{\cdot \bn}{\longrightarrow}& \C\cdot\bm_{\rho'}\\
\Big\downarrow {\bbf_{\rho}}&&\Big\downarrow {\bbf_{\rho'}}\\
\C\cdot\wtga(\bbf\cdot\bm_{\rho})&\stackrel{\cdot \bn}{\longrightarrow}& \C\cdot\wtga(\bbf\cdot\bm_{\rho'}),
\end{array}
\]
it suffices to show that $\bn\cdot\wtga(\bbf\cdot\bm_{\rho})$ is a base, which implies that $\bn$ induces a linear map equal to 1 on $\wtga(\bbf\cdot\bm_{\rho})$.
Since
\begin{align*}
\phi_k(\bn \cdot \wtga(\bbf\cdot\bm_{\rho}))
&=\phi_k\Big( \bbf \cdot \bm_{\rho'} \cdot \frac{\wtga(\bbf\cdot\bm_{\rho})}{\bbf\cdot\bm_{\rho}}\Big)\\
&= \phi_k(\bbf\cdot\bm_{\rho'})\cdot \frac{\wtga(\bbf\cdot\bm_{\rho})}{\bbf\cdot\bm_{\rho}}\\
&= \phi_k(\bbf\cdot\bm_{\rho}) \cdot \frac{\wtga(\bbf\cdot\bm_{\rho})}{\bbf\cdot\bm_{\rho}}
=\phi_k\big(\wtga(\bbf\cdot\bm_{\rho})\big),
\end{align*}
the monomial $\bn\cdot\wtga(\bbf\cdot\bm_{\rho})$ is in $\Gamma$ and is a base.

Now define the algebra morphism $\nu\colon \C[\xi_{\chi}, \eta_{\chi}, \zeta_{\chi} \st \chi \in G_k\dual] \rightarrow \C[D(\Gamma)]$ by $\nu(\bg_{\chi})=\bbf_{\rho}$ for $\bg_{\chi}\in\{\xi_{\chi},  \eta_{\chi}, \zeta_{\chi}\}$ so that $\mu(\bbf_{\rho})=\bg_{\chi}$. Since the generators of $I_{\Gamma'}$ are in $\ker\nu$, $\nu$ induces $\overline{\nu}\colon \C[D(\Gamma')]\rightarrow \C[D(\Gamma)]$. 

To show $\overline{\nu}$ is surjective, we prove that generators $\bbf_{\rho}$ are in the image of $\nu$. For $\bbf_{\rho}$ such that $\frac{\phi_k(\bbf\cdot\bm_{\rho})}{\phi_k(\bm_{\rho})}$ is of degree $\leq1$, $\bbf_{\rho}$ is in the image of $\nu$ by definition. By Lemma~\ref{Lem:no change of def spaces-inside} below, it follows that $\overline{\nu}$ is surjective.

Since $\overline{\mu}$ and $\overline{\nu}$ are the inverses of each other, $D(\Gamma)$ is isomorphic to $D(\Gamma')$. Note that $U(\Gamma)=D(\Gamma)\iso \C^3$ for $\Gamma \in \gr(r,1)$ from Section~\ref{subsec:G-bricks for 1/r(1,r-1,1)}. Using induction, we get $D(\Gamma)\iso \C^3$ for $\Gamma \in \gr(r,a)$.
\end{proof}
\begin{Lem}\label{Lem:no change of def spaces-inside}
In the situation as in Proposition~\ref{Prop:not change deformation space}, define
\[
S:=\left\{\bbf_{\rho}\in\{x_{\rho}, y_{\rho}, z_{\rho}\} \st \tfrac{\phi_k(\bbf\cdot\bm_{\rho})}{\phi_k(\bm_{\rho})} \text { is of degree $\leq 1$} \right\}\!.
\]
If $\frac{\phi_k(\bbf\cdot\bm_{\rho})}{\phi_k(\bm_{\rho})}$ is of degree $\geq 2$ for some $\bbf_{\rho}\in\{x_{\rho}, y_{\rho}, z_{\rho}\}$, then $
\bbf_{\rho}$ can be written as a multiple of some elements in $S$ modulo $I_{\Gamma}$.
\end{Lem}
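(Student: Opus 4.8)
The plan is to treat the left round-down function $\phi_2$ in full detail and to obtain the cases $k=1,3$ by the same argument, since the three round-down functions differ only by a permutation of $x,y,z$ and of the eigencoordinates. \textbf{Step 1 (locate the degree-two generators).} For a base $\bm_{\rho}\in\Gamma$ of weight $j:=\wt(\bm_{\rho})\in\{0,\dots,r-1\}$, multiplying the floor argument $\tfrac1r m_1+\tfrac ar m_2+\tfrac{r-a}r m_3$ by $x$, $y$ or $z$ adds $\tfrac1r,\tfrac ar,\tfrac{r-a}r$, so the floor rises by $0$ or $1$ and the $\eta_2$-exponent of $\tfrac{\phi_2(\bbf\cdot\bm_{\rho})}{\phi_2(\bm_{\rho})}$ jumps by at most one. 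Hence $y_{\rho}$ always has degree $\le1$ and lies in $S$, while a direct fractional-part computation shows that $z_{\rho}$ has degree $2$ exactly when $a\le j\le r-1$ (ratio $\zeta_2\eta_2$) and $x_{\rho}$ has degree $2$ exactly when $j=r-1$ (ratio $\xi_2\eta_2$). These are the only generators that need to be treated.

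\textbf{Step 2 (one factorisation step).} Suppose $z_{\rho}$ has degree $2$. I would use the commutation relation $y_{\rho}\,z_{\wt(y\cdot\bm_{\rho})}=z_{\rho}\,y_{\wt(z\cdot\bm_{\rho})}$ from \eqref{Eqtn:the conmmutative relations} and show that the side variable $y_{\wt(z\cdot\bm_{\rho})}$ equals $1$, i.e.\ lies in $\Lambda(\Gamma)$. The base $\wtga(z\cdot\bm_{\rho})$ has weight $j-a$, which lies in $[0,r-a)$ because $a\le j<r$; Lemma~\ref{Lem:localizations,lacing}(i) then gives $\phi_2\big(y\cdot\wtga(z\cdot\bm_{\rho})\big)=\phi_2\big(\wtga(z\cdot\bm_{\rho})\big)\in\Gamma'$, whence $y\cdot\wtga(z\cdot\bm_{\rho})\in\Gamma$ by the defining property $\Gamma=\phi_2\inv(\Gamma')$. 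Thus the $y$-action on $\wtga(z\cdot\bm_{\rho})$ is trivial, $y_{\wt(z\cdot\bm_{\rho})}=1$, and the relation collapses to $z_{\rho}=y_{\rho}\,z_{\wt(y\cdot\bm_{\rho})}\pmod{I_{\Gamma}}$ with $y_{\rho}\in S$. The degree-two variable $x_{\rho}$ (which occurs only at $j=r-1$) is handled identically through the first relation of \eqref{Eqtn:the conmmutative relations}: here $\wtga(x\cdot\bm_{\rho})$ has weight $0\in[0,r-a)$, so Lemma~\ref{Lem:localizations,lacing}(i) again kills the side $y$-variable and gives $x_{\rho}=y_{\rho}\,x_{\wt(y\cdot\bm_{\rho})}\pmod{I_{\Gamma}}$; this terminates in a single step since $x_{\wt(y\cdot\bm_{\rho})}$ has weight $a-1\ne r-1$ and hence degree $\le1$.

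\textbf{Step 3 (iterate and conclude).} After one step the remaining factor $z_{\wt(y\cdot\bm_{\rho})}$ sits on the base $\bm_{\rho^{(1)}}:=\wtga(y\cdot\bm_{\rho})$ of weight $j_1=(j+a)\bmod r$. Either $j_1<a$, so $z_{\wt(y\cdot\bm_{\rho})}$ already has degree $\le1$ and lies in $S$, or $a\le j_1<r$ and the very same step applies to it, its side $y$-variable vanishing by the identical weight estimate. Since each iteration increases the weight by $a$ until the sum first reaches $r$, after finitely many steps the weight wraps into $[0,a)$ and the recursion stops, yielding $z_{\rho}=y_{\rho^{(0)}}y_{\rho^{(1)}}\cdots y_{\rho^{(s-1)}}\,z_{\rho^{(s)}}\pmod{I_{\Gamma}}$, a product of elements of $S$ (some factors possibly equal to $1$). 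The symmetric telescopings for $k=3$ and $k=1$ use the distinguished variables $z$ and $x$ together with parts (ii) and (iii) of Lemma~\ref{Lem:localizations,lacing} respectively. I expect the main obstacle to be Step 2: one must verify that the relevant side variable genuinely lies in $\Lambda(\Gamma)$, which is exactly where the construction $\Gamma=\phi_k\inv(\Gamma')$ meets the localisation Lemma~\ref{Lem:localizations,lacing}, and one has to check that the weight hypotheses of that lemma persist at every base produced by the recursion.
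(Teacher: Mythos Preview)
Your argument is correct and follows essentially the same route as the paper: both reduce a degree-two variable to a product of $y$-variables and a single degree-one variable via the $yz$- (or $yx$-) commutation relation, using Lemma~\ref{Lem:localizations,lacing}(i) to force the obstructing $y$-arrow to lie in $\Lambda(\Gamma)$. The only cosmetic difference is the direction of travel: the paper \emph{divides} $\bm_{\rho}$ by the maximal power $y^{l}$ keeping $\phi_2$ fixed, so that the top $y^{l}$-arrow in the commutation square is $1$ and the bottom $z$-arrow $z_{\rho'}$ already has degree $1$; you instead \emph{multiply} by $y$ step by step, killing the side arrow $y_{\wt(z\cdot\bm_{\rho})}$ at each stage, which amounts to unwinding the same square one row at a time in the opposite direction.
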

\begin{proof}
We prove this for the left round down function $\phi_2$. Note that  $\frac{\phi_k(y\cdot\bm_{\rho})}{\phi_k(\bm_{\rho})}$ is of degree $\leq 1$ for all ${\rho}\in G\dual$. Thus $y_{\rho}$'s are in $S$.

Suppose that $\frac{\phi_k(\bbf\cdot\bm_{\rho})}{\phi_k(\bm_{\rho})}$ is of degree $\geq 2$ with $\bm_{\rho}=x^{m_1}y^{m_2}z^{m_3}$. Then the monomial $\bbf$ is either $x$ or $z$. In the case where $\bbf=z$, this means that
\[
\frac{1}{r}m_1+\frac{a}{r}m_2+\frac{r-a}{r}m_3-\left\lf\frac{1}{r}m_1+\frac{a}{r}m_2+\frac{r-a}{r}m_3\right\rf \geq \frac{a}{r}.
\]
As in the proof of Lemma~\ref{Lem:Connected Gamma}, there is a positive integer $l$ such that $\phi_{2}(\frac{\bm_{\rho}}{y^{l'}})=\phi_{2}(\bm_{\rho})$ for all $0 \leq l' \leq l$ with $\phi_{2}\big(\frac{\bm_{\rho}}{y^{l+1}} \big)\neq \phi_{2}(\bm_{\rho})$. Note that $\frac{\phi_2(\bbf\cdot\bm_{\rho'})}{\phi_2(\bm_{\rho'})}$ is of degree 1 where $\bm_{\rho'}=\frac{\bm_{\rho}}{y^{l}}$. Thus $\bbf_{\rho'}\in S$. From the commutation relations
\[
\begin{array}{ccc}
\C\cdot\bm_{\rho'} &\stackrel{\cdot y^l}{\longrightarrow}& \C\cdot\bm_{\rho}\\
\Big\downarrow {\bbf_{\rho'}}&&\Big\downarrow {\bbf_{\rho}}\\
\C\cdot\wtga(\bbf\cdot\bm_{\rho'})&\stackrel{\cdot y^l}{\longrightarrow}& \C\cdot\wtga(\bbf\cdot\bm_{\rho}),
\end{array}
\]
since $y^l$ induces a linear map on $\C\cdot\bm_{\rho'}$ set to be 1, we have
\[
\bbf_{\rho} \equiv \bbf_{\rho} \cdot y^l_{(\rho')} \equiv y^l_{(\rho)} \cdot \bbf_{\rho'} \mod I_{\Gamma}.
\]
As all $y_{\rho}$'s are in $S$, the assertion follows.
\end{proof}
\begin{Eg} \label{Eg:Calculating G-graphs in 1/7(1,3,4)} Let $G$ be the group of type $\frac{1}{7}(1,3,4)$ as in Example~\ref{Eg:G graph 1/7(1,3,4) }. The fan of the economic resolution of the quotient variety is shown in Figure~\ref{Fig:Fan of the economic resolution for 1/7(1,3,4)}.

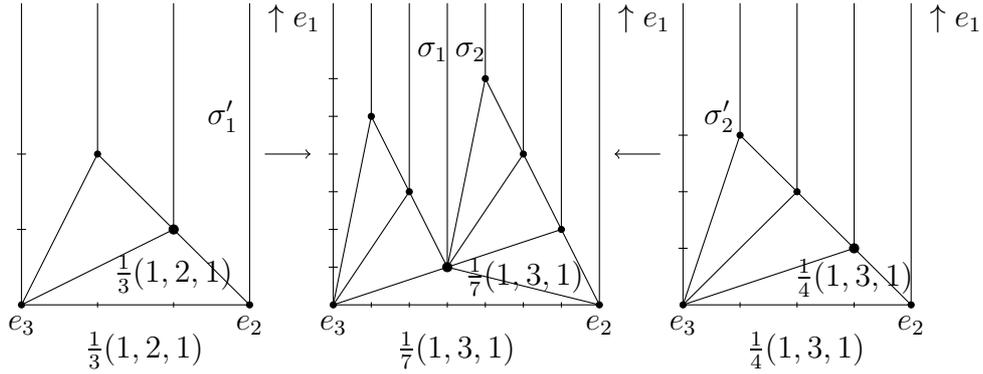
\begin{figure}[h]
\begin{center}
\begin{tikzpicture}
\coordinate [label=below:$e_3$] (e3) at (8.7,0);
\coordinate [label=below:$e_2$] (e2) at (11.7,0);
\foreach \x in {1,2,3}
    \draw (0.75*\x+8.7 ,1pt) -- (0.75*\x+8.7 ,-1pt);
\foreach \y in {1,2,3}
    \draw (8.64,0.75*\y) -- (8.76,0.75*\y);

\foreach \x in {1,2,3} 
\coordinate  (v\x) at (0.75*\x+8.7,3-0.75*\x);
\foreach \x in {1,2,3} 
\draw[fill] (v\x) circle [radius=0.04];

\draw (e3) -- (e2);

\draw (e3) -- (8.7,4);
\draw (e2) -- (11.7,4);

\draw (e2) -- (v1);
\foreach \x in {1,2,3}
\draw (e3) -- (v\x);
\foreach \x in {1,2,3}
\draw (v\x) -- (0.75*\x+8.7,4);

\coordinate [label=below:$e_3$] (e37) at (4.1,0);
\coordinate [label=below:$e_2$] (e27) at (7.6,0);
\foreach \x in {1,2,3,4,5,6}
    \draw (0.5*\x+4.1 ,1pt) -- (0.5*\x+4.1 ,-1pt);
\foreach \y in {1,2,3,4,5,6}
    \draw (4.04,0.5*\y) -- (4.16,0.5*\y);

\foreach \x in {1,2,3} 
\coordinate  (v\x7) at (0.5*\x+4.1,3.5-\x);
\foreach \x in {4,5,6} 
\coordinate  (v\x7) at (0.5*\x+4.1,7-\x);

\foreach \x in {1,2,3,4,5,6} 
\draw[fill] (v\x7) circle [radius=0.04];

\draw (e37) -- (e27);

\draw (e37) -- (4.1,4);
\draw (e27) -- (7.6,4);

\draw (v17) -- (v37);

\foreach \x in {1,2,3}
\draw (e37) -- (v\x7);

\draw (v47) -- (e27);

\foreach \x in {4,5,6}
\draw (v37) -- (v\x7);

\draw (v37) -- (e27);

\foreach \x in {1,2,3,4,5,6}
\draw (v\x7) -- (0.5*\x+4.1,4);

\coordinate [label=below:$e_3$] (e33) at (0,0);
\coordinate [label=below:$e_2$] (e23) at (3,0);
\foreach \x in {1,2}
    \draw (\x ,1pt) -- (\x,-1pt);
\foreach \y in {1,2}
    \draw (-0.06,\y) -- (0.06,\y);

\foreach \x in {1,2} 
\coordinate  (v\x3) at (\x,3-\x);
\foreach \x in {1,2} 
\draw[fill] (v\x3) circle [radius=0.04];

\draw (e33) -- (e23);

\draw (e33) -- (0,4);
\draw (e23) -- (3,4);

\draw (e23) -- (v13);
\foreach \x in {1,2}
\draw (e33) -- (v\x3);
\foreach \x in {1,2}
\draw (v\x3) -- (\x,4);

\draw[fill] (e2) circle [radius=0.04];
\draw[fill] (e3) circle [radius=0.04];
\draw[fill] (e27) circle [radius=0.04];
\draw[fill] (e37) circle [radius=0.04];
\draw[fill] (e23) circle [radius=0.04];
\draw[fill] (e33) circle [radius=0.04];
\draw[->] (3.2,2) -- (3.8,2);
\draw[->] (8.4,2) -- (7.8,2);
\node [right] at (3.1,3.8) {$\uparrow e_1$};    
\node [right] at (7.7,3.8) {$\uparrow e_1$};
\node [right] at (11.8,3.8) {$\uparrow e_1$};

\node [right] at (2.3,2.5) {$\sigma'_{1}$};
\node [right] at (5.06,3.35) {$\sigma_{1}$};
\node [right] at (5.56,3.35) {$\sigma_{2}$};
\node [right] at (8.83,2.5) {$\sigma'_{2}$};
    
\node [right] at (0.7,-0.6) {$\tfrac{1}{3}(1,2,1)$};
\node [right] at (4.8,-0.6) {$\tfrac{1}{7}(1,3,1)$};
\node [right] at (9.4,-0.6) {$\tfrac{1}{4}(1,3,1)$};

\node [below] at (2,0.8) {$\tfrac{1}{3}(1,2,1)$};
\node [right] at (5.7,0.37) {$\tfrac{1}{7}(1,3,1)$};
\node [below] at (v3) {$\tfrac{1}{4}(1,3,1)$};

\draw[fill] (v23) circle [radius=0.06];
\draw[fill] (v37) circle [radius=0.06];
\draw[fill] (v3) circle [radius=0.06];
\end{tikzpicture}
\end{center}
\caption{Recursion process for $\frac{1}{7}(1,3,4)$}\label{Fig:recursion process of 1/7(1,3,4)}
\end{figure}

We now calculate $G$-bricks associated to the following cones:
\begin{align*}
\sigma_1 &:= \Cone\left((1,0,0),\tfrac{1}{7}(1,3,4),\tfrac{1}{7}(3,2,5)\right)\!, \\[3pt]
\sigma_2 &:= \Cone\left((1,0,0),\tfrac{1}{7}(6,4,3),\tfrac{1}{7}(1,3,4)\right)\!.
\end{align*}
Note that the left side of the fan corresponds to the economic resolution for the quotient singularity of type $\frac{1}{3}(1,2,1)$, which is $\GHii{2}$, where $G_2$ is of type $\frac{1}{3}(1,2,1)$. Let $\xi,\eta,\zeta$ denote the eigencoordinates. Let $\sigma'_1$ be the cone in the fan of $\GHii{2}$ which corresponds to $\sigma_1$. Observe that the corresponding $G_2$-brick is
\[
\Gamma'_1 = \big\{ 1, \zeta,\zeta^2 \big\}.
\]
Since the left round down function $\phi_2$ is
\[
\phi_{2}(x^{m_1}y^{m_2}z^{m_3})=\xi^{m_1}\eta^{\lf\frac{1}{7}m_1 + \frac{3}{7} m_2 + \frac{4}{7}m_3\rf}\zeta^{m_3},
\]
the $G$-brick corresponding to $\sigma_1$ is
\begin{align*}
\Gamma_1 &\eqd \big\{ x^{m_1}y^{m_2}z^{m_3} \in \lau \st \phi_{2}(x^{m_1}y^{m_2}z^{m_3}) \in \Gamma'_1 \big\}\\[4pt]
&= \big\{ 1,y,y^2,z,\tfrac{z}{y},\tfrac{z^2}{y},\tfrac{z^2}{y^2} \big\}.
\end{align*}

On the other hand, the right side of the fan corresponds to the economic resolution of the quotient variety $\frac{1}{4}(1,3,1)$ which is $\GHii{3}$, where $G_3$ is of type $\frac{1}{4}(1,3,1)$ with eigencoordinates $\alpha,\beta,\gamma$. Let $\sigma'_2$ be the cone in the fan of $\GHii{2}$ which corresponds to $\sigma_2$. Observe that the corresponding $G_3$-brick is
\[
\Gamma'_2 = \big\{ 1, \beta,\beta^2, \beta^3 \big\}.
\]
Since the right round down function $\phi_3$ is
\[
\phi_{3}(x^{m_1}y^{m_2}z^{m_3})=\alpha^{m_1}\beta^{m_2}\gamma^{\lf\frac{1}{7}m_1 + \frac{3}{7} m_2 + \frac{4}{7}m_3\rf},
\]
the $G$-brick corresponding to $\sigma_2$ is
\begin{align*}
\Gamma_2 &\eqd \big\{ x^{m_1}y^{m_2}z^{m_3} \in \lau \st \phi_{2}(x^{m_1}y^{m_2}z^{m_3}) \in \Gamma'_2 \big\}\\[4pt]
&= \big\{ 1,z,y,y^2,\tfrac{y^2}{z},\tfrac{y^3}{z},\tfrac{y^3}{z^2} \big\}.
\end{align*}
From Example~\ref{Eg:cone assoc to G graph 1/7(1,3,4) },
$\sigma(\Gamma_1)=\sigma_1$ and $\sigma(\Gamma_2)=\sigma_2$.\eeg
\subsection{Stability parameters for $\gr(r,a)$}\label{subsec:Our stability parameters}
Let $G \subset \GL_3(\C)$ be the finite subgroup of type $\frac{1}{r}(1,a,r-a)$ with $r$ coprime to $a$. We may assume $2 a<r$. Let $G_2$ and $G_3$ be the groups of type $\lgr$ and of type $\rgr$, respectively. 

Given stability conditions $\theta^{(2)}$ for Danilov $G_2$-bricks and $\theta^{(3)}$ for Danilov $G_3$-bricks, 
take a GIT parameter $\theta_P\in\Theta$ satisfying the following system of linear equations:
\begin{equation}\label{Eqtn:linear equations}
\begin{cases}
\theta^{(2)}(\chi) = \sum\limits_{\phi_2(\rho)=\chi} \theta_P(\rho) &\text{ for all $\chi \in G_2\dual$,}\\[3pt]
\theta^{(3)}(\chi') = \sum\limits_{\phi_3(\rho)=\chi'} \theta_P(\rho) &\text{ for all $\chi' \in G_3\dual$.}
\end{cases}
\end{equation}
Define the GIT parameter $\vtheta \in \Theta$ by
\begin{equation}\label{Eqtn:def. of vartheta}
\vtheta(\rho) =
\begin{cases}
-1 &\text{if $0 \leq \wt(\rho)<a$ ,}\\
1 &\text{if $r-a \leq \wt(\rho)<r$,}\\
0 &\text{otherwise.}\\
\end{cases}
\end{equation}
Observe that $\sum\limits_{\phi_k(\rho)=\chi} \vtheta(\rho)=0$ for all $\chi \in G_k\dual$
\footnote{In addition, if any $\theta \in \Theta$ satisfies that $\sum_{\phi_k(\rho)=\chi} \theta(\rho)=0$ for all $\chi \in G_k\dual$ and $k=2,3$, then $\theta$ must be a constant multiple of $\vtheta$. This also explains the existence of a solution $\theta_P $ for \eqref{Eqtn:linear equations}.}.
For a sufficiently large natural number $m$, set
\begin{equation}\label{Eqtn:stability parameter m large}
\theta := \theta_P + m \vtheta.
\end{equation}

We claim that every $\Gamma\in\gr(r,a)$ is $\theta$-stable.
\begin{Eg}
As in Example~\ref{Eg:Calculating G-graphs in 1/7(1,3,4)}, let $G$ be the group of type $\frac{1}{7}(1,3,4)$. For each $0\leq i \leq 6$, let $\rho_i$ denote the irreducible representation of $G$ whose weight is $i$. We saw that the left side of the fan is $\GHii{2}$, where $G_2$ is of type $\frac{1}{3}(1,2,1)$ and that the right side of the fan is $\GHii{3}$, where $G_3$ is of type $\frac{1}{4}(1,3,1)$. Let $\{\chi_0,\chi_1,\chi_2\}$ and $\{\chi'_0,\chi'_1,\chi'_2,\chi'_3\}$ be the characters of $G_2$ and $G_3$, respectively. Take GIT parameters $\theta^{(2)}$, $\theta^{(3)}$ corresponding to $\GHil$ such as (see \eqref{Eqtn:Stab for G-Hilb}):
\[
\theta^{(2)}=(-2,1,1), \quad \theta^{(3)}=(-3,1,1,1).
\]
We have the following system of linear equations:
\[
\left\{
\begin{array}{ccl}
-2 & = & \theta_P(\rho_0)+\theta_P(\rho_3)+\theta_P(\rho_6), \\
1 &=& \theta_P(\rho_1)+\theta_P(\rho_4),\\
1 &=&\theta_P(\rho_2)+\theta_P(\rho_5),\\
-3 &=&\theta_P(\rho_0)+\theta_P(\rho_4),\\
1 &=&\theta_P(\rho_1)+\theta_P(\rho_5),\\
1 &=&\theta_P(\rho_2)+\theta_P(\rho_6),\\
1 &=&\theta_P(\rho_3).
\end{array}
\right.
\]
Take $\theta_P=(-1,3,3,1,-2,-2,-2)$ as a partial solution. For the parameter $\vtheta=(-1,-1,-1,0,1,1,1)$, define $\theta=\theta_P+m\vtheta$ for large $m$. 

Consider the following $G$-brick
\[
\Gamma=\Big\{ 1,y,y^2,z,\tfrac{z}{y},\tfrac{z^2}{y},\tfrac{z^2}{y^2} \Big\}.
\]

Let $\sF$ be the submodule of $C(\Gamma)$ with basis $A=\big\{z,\tfrac{z}{y},\tfrac{z^2}{y}\big\}$. Note that $\vtheta(\sF)>0$ and 
\[
\phi_2\inv\big(\phi_2(A)\big)=\Big\{z,\tfrac{z}{y},\tfrac{z^2}{y},\tfrac{z^2}{y^2} \Big\} \supsetneq A.
\]
Thus $\theta(\sF)$ is positive for large enough $m$. More precisely,
\[
\theta(\sF)=3-m+(-2+m)+(-2+m)=m-1
\]
is positive if $m>1$.

On the other hand, consider the submodule $\sG$ of $C(\Gamma)$ with basis $B=\big\{\tfrac{z}{y},\tfrac{z^2}{y}\big\}$. Note that $\vtheta(\sG)=0$ and $\phi_2\inv\big(\phi_2(B)\big)=B$. In this case, the set $\phi_2(B)$ gives a submodule $\sG'$ of $C(\Gamma')$ with 
\[
\theta^{(2)}(\sG')=\theta(\sG).
\]
Since $C(\Gamma')$ is $\theta^{(2)}$-stable, $\theta^{(2)}(\sG')$ is positive. Hence $\theta(\sG)$ is positive.\eeg
\begin{Lem} \label{Lem:our stability} Let $\theta$ be the parameter in \eqref{Eqtn:stability parameter m large}.
For the set $\gr(r,a)$ in Proposition~\ref{Prop:there exists a set of G-bricks}, if $\Gamma$ is in $\gr(r,a)$, then $\Gamma$ is $\theta$-stable.
\end{Lem}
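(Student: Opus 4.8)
The plan is to turn $\theta$-stability of $C(\Gamma)$ into a purely combinatorial statement about subsets of $\Gamma$ and then to run the same recursion on $(r,a)$ that produced $\gr(r,a)$ in Proposition~\ref{Prop:there exists a set of G-bricks}. By Lemma~\ref{Lem:combinatorial description of submodule}, a nonzero proper submodule $\sG\subset C(\Gamma)$ is the same datum as a nonempty proper subset $A\subsetneq\Gamma$ such that $\bm_{\rho}\in A$ and $\bbf\cdot\bm_{\rho}\in\Gamma$ (for $\bbf\in\{x,y,z\}$) force $\bbf\cdot\bm_{\rho}\in A$; I write $\theta(\sG)=\theta_P(A)+m\,\vtheta(A)$ with $\theta_P(A)=\sum_{\bm_{\rho}\in A}\theta_P(\rho)$ and likewise for $\vtheta$. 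I induct on $r$, the base cases being $a=1$ or $a=r-1$ (Section~\ref{subsec:G-bricks for 1/r(1,r-1,1)}, where every brick is stable) together with the bottom cone $\Gamma=\{1,x,\dots,x^{r-1}\}$; for the latter $A$ is forced to be a top segment $\{x^{k},\dots,x^{r-1}\}$, and a direct count of how many of the weights $k,\dots,r-1$ land in $[r-a,r)$ versus in $[0,a)$ shows $\vtheta(A)>0$.

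For a left-side brick $\Gamma=\phi_2\inv(\Gamma')$ (the right-side case being identical with $\phi_3$) I decompose $\Gamma$ into the fibers of $\phi_2$; the fiber over $\bk_{\chi}\in\Gamma'$ is a single ``$y$-chain'' obtained by varying the $y$-exponent while the floor $\lf\tfrac1r m_1+\tfrac ar m_2+\tfrac{r-a}r m_3\rf$ stays fixed. The key lemma to establish is that, because $2a<r$, each maximal chain meets the band $[0,a)$ exactly once (at its bottom) and the band $[r-a,r)$ exactly once (at its top), with all intermediate weights in $[a,r-a)$: maximality downward forces the smallest fractional part $<a/r$ and maximality upward forces the largest $\ge (r-a)/r$, where the fractional part equals $\wt(\bm)/r$, while the step $a/r$ forbids a second visit to either band. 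Hence along every chain $\vtheta$ reads $-1,0,\dots,0,+1$, matching $\sum_{\phi_2(\rho)=\chi}\vtheta(\rho)=0$. Since $A$ is closed under multiplication by $y$ and every non-top element of a chain has weight $<r-a$ (so Lemma~\ref{Lem:localizations,lacing}(i) keeps $y\cdot\bm$ in the same fiber, hence in $\Gamma$), the intersection $A\cap(\text{chain})$ is an upward-closed top segment. Its $\vtheta$-contribution is therefore $0$ when it is empty or the whole chain, and $+1$ when it is proper and nonempty (it then contains the top but not the bottom). Summing over chains gives $\vtheta(A)\ge 0$, with equality if and only if $A$ is $\phi_2$-saturated, i.e.\ $A=\phi_2\inv(\phi_2(A))\cap\Gamma$. (For $\phi_3$ the chains have length $\le 2$, the length-one chains sitting in the middle band $[a,r-a)$, so they are automatically saturated and the same dichotomy holds.)

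With $\vtheta(A)\ge 0$ in hand the argument splits into two cases. If $\vtheta(A)>0$ then $\theta(A)=\theta_P(A)+m\,\vtheta(A)>0$ as soon as $m>-\theta_P(A)/\vtheta(A)$; since there are finitely many bricks in $\gr(r,a)$ and each $C(\Gamma)$ has finitely many submodules, one single $m$ serves all of them. If instead $\vtheta(A)=0$ then $A$ is $\phi_2$-saturated, so it is a disjoint union of whole fibers and $\phi_2(A)$ is a nonzero proper submodule of $C(\Gamma')$; using the defining relation $\theta^{(2)}(\chi)=\sum_{\phi_2(\rho)=\chi}\theta_P(\rho)$ from \eqref{Eqtn:linear equations} one computes
\[
\theta(A)=\theta_P(A)=\sum_{\chi\in\phi_2(A)}\ \sum_{\phi_2(\rho)=\chi}\theta_P(\rho)=\theta^{(2)}\big(\phi_2(A)\big),
\]
which is positive because the Danilov $G_2$-brick $\Gamma'$ is $\theta^{(2)}$-stable by the inductive hypothesis (here $G_2$ has order $a<r$, so the induction applies, and symmetrically $G_3$ has order $r-a<r$). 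This yields $\theta(A)>0$ in every case.

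I expect the main obstacle to be the chain lemma of the second paragraph, namely proving that every maximal $\phi_2$-fiber reaches both extreme bands exactly once so that no fiber lies entirely in the middle band, together with the verification that a $\phi_2$-saturated submodule of $C(\Gamma)$ descends to an honest submodule $\phi_2(A)$ of $C(\Gamma')$ (the compatibility of the round-down function with the two module structures, in the spirit of Proposition~\ref{Prop:From Gamma' to Gamma:S(Gamma)=S(Gamma')}). Once these two combinatorial facts are secured, the case analysis above and the choice of a uniformly large $m$ are routine.
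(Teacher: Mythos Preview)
Your proposal is correct and follows essentially the same route as the paper: the three-way case split according to whether the cone lies below, left of, or right of $v$; the inequality $\vtheta(A)\ge 0$ for every submodule, with equality precisely when $A$ is $\phi_k$-saturated; and, in the saturated case, the descent $\theta(A)=\theta^{(k)}(\phi_k(A))$ to the smaller group via \eqref{Eqtn:linear equations}. Your ``chain lemma'' (each $\phi_2$-fibre is a $y$-chain reading $-1,0,\dots,0,+1$ under $\vtheta$, and $A$ meets each chain in a top segment) is an explicit unpacking of what the paper obtains more tersely by invoking Lemma~\ref{Lem:localizations,lacing} and Lemma~\ref{Lem:combinatorial description of submodule}; the descent of a $\phi_k$-saturated $A$ to an honest submodule of $C(\Gamma')$, which you correctly flag as the remaining obstacle, is exactly what the paper verifies in the last paragraph of its Case~(2) using Lemma~\ref{Lem:Connected Gamma}.
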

\begin{proof} Let $\Gamma$ be a $G$-brick in $\gr$ and $\sigma$ the cone corresponding to $\Gamma$. We have the following three cases as in Section~\ref{subsec:G-bricks for 1/r(1,a,r-a)}:
\begin{enumerate}
\item[(1)] the cone $\sigma$ is below the vector $v$.
\item[(2)] the cone $\sigma$ is on the left side of the vector $v$.
\item[(3)] the cone $\sigma$ is on the right side of the vector $v$.
\end{enumerate}

In Case (1), $\Gamma=\{1,x,x^2,\ldots,x^{r-2},x^{r-1}\}$.
By Lemma~\ref{Lem:combinatorial description of submodule}, any nonzero proper submodule $\sG$ of $C(\Gamma)$ is given by
\[
A=\{x^{j},x^{j+1}, \ldots,x^{r-2},x^{r-1}\}
\]
for some $1\leq j \leq r-1$. Since $\vtheta(\sG)>0$, $\Gamma$ is $\theta$-stable for sufficiently large $m$.

We now consider Case (2). 
Let $\Gamma'$ be the $G_2$-brick corresponding to $\Gamma$.
Let $\sG$ be a submodule of $C(\Gamma)$ with $\C$-basis $A \subset \Gamma$. Lemma~\ref{Lem:localizations,lacing} and Lemma~\ref{Lem:combinatorial description of submodule} imply that if $\bm_{\rho} \in A$ for $0 \leq \wt(\bm_{\rho}) <a$, then $\phi_2\inv\big(\phi_2(\bm_{\rho})\big) \subset A$. Thus $\vtheta(\sG) \geq 0$ from the definition of $\vtheta$.

If $\vtheta(\sG) > 0$, then it follows that $\theta(\sG)>0$ for sufficiently large $m$. 

Let us assume that $\vtheta(\sG) = 0$. Note that $A=\phi_2\inv\big(\phi_2(A)\big)$; otherwise there exists $\bm_{\rho}$ in $\phi_2\inv\big(\phi_2(A)\big) \setminus A$ with $0 \leq \wt(\bm_{\rho}) <a$. 
To show that $\theta(\sG)$ is positive, we prove that $\phi_2(A)$ gives a submodule $\sG'$ of $C(\Gamma')$ and that $\theta(\sG)=\theta^{(2)}(\sG')$. Since $\theta$ satisfies the equations~\eqref{Eqtn:linear equations}, it suffices to show that $\phi_2(A)$ gives a submodule of $C(\Gamma')$. Let $\xi, \eta, \zeta$ be the coordinates of $\C^3$ with respect to the action of $G_2$. By Lemma~\ref{Lem:combinatorial description of submodule}, it is enough to show 
that if $\bg \cdot \phi_2(\bm_{\rho}) \in \Gamma'$ for some $\bg \in \{\xi, \eta, \zeta\}$ and $\bm_{\rho} \in A$, then $\bg \cdot \phi_2(\bm_{\rho})\in\phi_2(A)$. Suppose that $\bg \cdot \phi_2(\bm_{\rho}) \in \Gamma'$ for some $\bm_{\rho} \in A$. By Lemma~\ref{Lem:Connected Gamma}, there exists $\bm_{\rho'}$ such that 
\[
\phi_2(\bbf \cdot \bm_{\rho'})=\bg\cdot\phi_2(\bm_{\rho})
\]
with $\phi_2(\bm_{\rho'})=\phi_2(\bm_{\rho})$ for some $\bbf\in\{x,y,z\}$. In particular, $\bbf \cdot \bm_{\rho'}$ is in $ \Gamma$. Since $A=\phi_2\inv\big(\phi_2(A)\big)$, we have $\bm_{\rho'} \in A$, which implies $\bbf \cdot \bm_{\rho'}\in A$ as $A$ is a $\C$-basis of $\sG$. Thus $\bg\cdot\phi_2(\bm_{\rho})$ is in $\phi_2(A)$.
\end{proof}
\begin{Rem}
At this moment, our stability parameter $\theta$ in \eqref{Eqtn:stability parameter m large} has nothing to do with {\em K\k{e}dzierski's GIT chamber} $\wc(r,a)$ described in~\cite{Ked14}. In Section~\ref{Sec:Kedzierski's GIT chamber}, it is shown that the parameter $\theta$ is in $\wc(r,a)$. \erem
\subsection{Main Theorem}\label{subsec:Main Theorem}
\begin{Thm}\label{Thm:Main Theorem}
The economic resolution $Y$ of a 3-fold terminal quotient singularity $X=\C^3/G$ is isomorphic to the birational component $\yth$ of the moduli space $\mth$ of $\theta$-stable $G$-constellations for a suitable parameter~$\theta$.
\end{Thm}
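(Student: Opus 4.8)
The plan is to glue together the local results of Sections~\ref{Sec:G-bricks and moduli spaces of G-constellations}--\ref{Sec:Main Theorem}. First I would fix the stability parameter $\theta=\theta_P+m\vtheta$ from \eqref{Eqtn:stability parameter m large} with $m$ large; it may be taken generic, since it lies in K\k{e}dzierski's chamber $\wc(r,a)$ (Section~\ref{Sec:Kedzierski's GIT chamber}), so that Theorem~\ref{Thm:CMT}, Propositions~\ref{Prop:open immersion} and~\ref{Prop:1-to-1 corr between torus fixed points and G-bricks}, and Theorem~\ref{Thm:Y theta with G-bricks} all apply. By Lemma~\ref{Lem:our stability} every Danilov $G$-brick $\Gamma\in\gr(r,a)$ is $\theta$-stable, so $\gr(r,a)$ is contained in the set $\gr$ of all $\theta$-stable $G$-bricks.

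Next I would pin down the toric data. By Proposition~\ref{Prop:there exists a set of G-bricks} the assignment $\sigma\mapsto\Gamma_{\sigma}$ gives a bijection $\mco\to\gr(r,a)$ with $S(\Gamma_{\sigma})=\sigma\dual\cap M$, hence $\sigma(\Gamma_{\sigma})=S(\Gamma_{\sigma})\dual=\sigma$ and $U(\Gamma_{\sigma})\iso U_{\sigma}$. Thus the cones $\sigma(\Gamma)$ for $\Gamma\in\gr(r,a)$ are exactly the maximal cones of the fan $\Sigma$ of the economic resolution $Y$, and Proposition~\ref{Prop:open immersion} glues the charts $U(\Gamma_{\sigma})\hookrightarrow\yth$ into an open immersion $\psi\colon Y\hookrightarrow\yth$ that is the identity on the torus $T=(\C\mul)^3/G$.

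The crux is to prove $\gr=\gr(r,a)$, equivalently that $\psi$ is surjective. Here I would compare fans: by Theorem~\ref{Thm:Y theta with G-bricks}(ii) the normalization $\yth^{\nu}$ has fan $\{\sigma(\Gamma):\Gamma\in\gr\}$, which contains $\Sigma$ as a subcollection. Since $Y\to X=\C^3/G$ and $\yth\to X$ are both projective and birational, both $\Sigma$ and the fan of $\yth^{\nu}$ are subdivisions of $\sigma_{+}$ with full support $\sigma_{+}$. A fan contained in another while already covering $\sigma_{+}$ cannot be proper: a maximal cone $\sigma(\Gamma)$ with $\Gamma\in\gr\setminus\gr(r,a)$ would have interior meeting the interior of some $\sigma\in\Sigma$, impossible for two cones of a single fan. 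Hence $\gr=\gr(r,a)$.

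Finally, Theorem~\ref{Thm:Y theta with G-bricks}(i) yields $\yth\iso\bigcup_{\Gamma\in\gr}U(\Gamma)=\bigcup_{\Gamma\in\gr(r,a)}U(\Gamma)$, and the toric gluing $U(\Gamma_{\sigma})\iso U_{\sigma}$ along $\Sigma$ identifies the right-hand side with $Y$; equivalently, $\yth$ is normal by Corollary~\ref{Cor:When Y theta is normal}, because $S(\Gamma_{\sigma})=\sigma(\Gamma_{\sigma})\dual\cap M$, so $\yth=\yth^{\nu}\iso Y$. I expect the surjectivity step to be the main obstacle, since it is precisely where one must rule out spurious $\theta$-stable $G$-bricks outside the Danilov family; the properness of both $Y$ and $\yth$ over the affine base $X$, which forces both fans to exhaust $\sigma_{+}$, is the essential input that closes the gap.
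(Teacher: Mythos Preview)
Your approach is correct and recovers the theorem, but it takes a longer route than the paper. After constructing the open immersion $\psi\colon Y\hookrightarrow\yth$ exactly as you do, the paper simply observes that both $Y$ and $\yth$ are projective over $X$ (the first by construction of the economic resolution, the second by Theorem~\ref{Thm:CMT}(ii)); hence $\psi$ is proper, so it is also a closed immersion, and a closed immersion between irreducible reduced schemes of the same dimension is an isomorphism. That is the entire argument: no fan comparison, no appeal to Theorem~\ref{Thm:Y theta with G-bricks}, and in particular no need to establish $\gr=\gr(r,a)$ at all.

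Your detour through the fans does yield the extra information $\yth^{\nu}\cong Y$ directly, but there is a small gap in the step ``hence $\gr=\gr(r,a)$'': from the fact that the set of maximal cones $\{\sigma(\Gamma):\Gamma\in\gr\}$ coincides with $\mco$ you cannot immediately deduce $\gr=\gr(r,a)$ unless you know that $\Gamma\mapsto\sigma(\Gamma)$ is injective on $\gr$, and Theorem~\ref{Thm:Y theta with G-bricks}(ii) only asserts that these cones form the fan, not that distinct $\theta$-stable $G$-bricks give distinct cones. This is easily patched (for instance, once $\yth^{\nu}\cong Y$, the normalization map $\yth^{\nu}\to\yth$ and the open immersion $\psi$ both extend the identity on the torus, hence coincide, so $\psi$ is surjective), but the paper's two-line properness argument avoids the issue entirely. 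Your closing remark already names properness over $X$ as the essential input; the point is that it can be applied directly to $\psi$ rather than only to the supports of the fans.
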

\begin{proof}
From Proposition~\ref{Prop:there exists a set of G-bricks} and Lemma~\ref{Lem:our stability}, Proposition~\ref{Prop:open immersion} implies that there exists an open immersion from $Y$ to $\yth$ fitting in the following commutative diagram:
\[
\begin{array}{rcc}
Y& \rightarrow & \yth \\[4pt]
&\searrow & \downarrow \\[4pt]
&&X.
\end{array}
\]
Since both $Y$ and $\yth$ are projective over $X$, the open immersion $Y \rightarrow  \yth$ is a closed embedding. As both $Y$ and $\yth$ are 3-dimensional and irreducible, this embedding is an isomorphism.
\end{proof}
\begin{Con}\label{Con:M_theta irreducible}
The moduli space $\mth$ is irreducible.
\end{Con}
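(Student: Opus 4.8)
The plan is to deduce the irreducibility of $\mth$ from the structural results already in hand, by proving the equality $\mth=\yth$. The argument has three layers. First I would record that $\yth$ is not merely an irreducible component but a \emph{connected} component of $\mth$. Indeed, by Theorem~\ref{Thm:Y theta with G-bricks} the component $\yth$ is covered by the charts $U(\Gamma)$ for the Danilov bricks $\Gamma\in\gr(r,a)$, and by Proposition~\ref{Prop:not change deformation space} each of these satisfies $U(\Gamma)=D(\Gamma)\iso\C^3$. Since each $D(\Gamma)$ is an \emph{open} subscheme of $\mth$ by King~\cite{K94} and is entirely contained in $\yth$, the union $\yth=\bigcup_{\Gamma\in\gr(r,a)}D(\Gamma)$ is open in $\mth$; being an irreducible component it is also closed, hence a connected component. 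Consequently $\mth=\yth\sqcup Z$ for a (possibly empty) closed $\T$-invariant subscheme $Z$, and the theorem reduces to showing $Z=\varnothing$.

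Second, I would reduce the vanishing of $Z$ to a statement about torus fixed points. Since $\mth$ is projective over the affine base $\mzero$ and the morphism is $\T$-equivariant, the coordinate ring of $\mzero$ is graded by $\T$-weights lying in a pointed cone, so there is a one-parameter subgroup $\lambda$ of $\T$ for which $\lim_{t\to 0}\lambda(t)\cdot x$ exists in $\mth$ for every $x$ (the image limit exists in $\mzero$, and properness lifts it); any such limit is $\lambda$-fixed. Choosing $\lambda$ generic so that its fixed locus coincides with the $\T$-fixed locus, we conclude that each connected component of $\mth$ contains a $\T$-fixed point. Thus $Z=\varnothing$ would follow once I show that \emph{every} $\T$-fixed point of $\mth$ already lies in $\yth$.

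Third --- and this is where the real work lies --- I would classify the $\T$-fixed points. Exactly as in the proof of Proposition~\ref{Prop:1-to-1 corr between torus fixed points and G-bricks}, a $\T$-fixed point corresponds to a torus-invariant $\theta$-stable $G$-constellation, which is isomorphic to $C(\Gamma)$ for a $\theta$-stable $G$-prebrick $\Gamma$; the point lies in $\yth$ precisely when $\Gamma$ is a $G$-brick belonging to $\gr(r,a)$. So the conjecture is equivalent to the purely combinatorial assertion that, for the specific parameter $\theta=\theta_P+m\vtheta$ of~\eqref{Eqtn:stability parameter m large}, the Danilov bricks exhaust all $\theta$-stable $G$-prebricks. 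The strategy I would follow is to run the stability analysis of Lemma~\ref{Lem:our stability} in reverse, as an induction on $r$. For $m$ large the dominant term $m\vtheta$ forces every proper submodule $\sG\subset C(\Gamma)$ of a $\theta$-stable prebrick to satisfy $\vtheta(\sG)\geq 0$, and the boundary locus $\vtheta(\sG)=0$ descends, through the left and right round-down functions $\phi_2$ and $\phi_3$, to submodules of $G_k$-constellations that must be $\theta^{(k)}$-stable; by the inductive hypothesis the corresponding $G_k$-prebricks are uniquely the Danilov bricks of the lower-dimensional singularities $\lgr$ and $\rgr$. The delicate point is to show that these two one-sided descents, imposed simultaneously, together with connectedness and the unique central brick $\{1,x,\dots,x^{r-1}\}$, pin $\Gamma$ down to a single element of $\gr(r,a)$ and leave no room for an exotic torus-invariant $\theta$-stable constellation of the kind that \cite{CMTb} exhibits for other parameters.

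I expect this last step to be the main obstacle. The existence (for general $\theta$) of torus-invariant $\theta$-stable $G$-prebricks that are not $G$-bricks is exactly what prevents $\yth$ from filling out $\mth$, so everything hinges on proving that the particular chamber containing $\theta=\theta_P+m\vtheta$ admits no such exotic prebricks. Establishing that the simultaneous $\phi_2$- and $\phi_3$-constraints cut out \emph{precisely} $\gr(r,a)$, rather than merely containing it, is the content that would upgrade ``$\yth$ is a connected component'' to ``$\yth=\mth$'', and is the reason the statement is left as a conjecture.
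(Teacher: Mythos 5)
You are proposing a proof of a statement that the paper deliberately leaves open: Conjecture~\ref{Con:M_theta irreducible} has no proof in the paper, only the remarks following it, and your first two layers reproduce exactly those remarks. That $\yth$ is a connected component is the paper's own observation (it follows from Proposition~\ref{Prop:not change deformation space}, since $\yth$ is covered by the charts $U(\Gamma)=D(\Gamma)\iso\C^3$, which are open in $\mth$, so $\yth$ is open as well as closed), and your reduction to torus-invariant objects is the paper's sentence ``if every torus invariant $\theta$-stable $G$-constellation lies over the birational component $\yth$, then $\mth$ is irreducible.'' Your layer two (every connected component of $\mth$ contains a $\T$-fixed point, via properness over $\mzero\iso\C^3/G$ and a generic one-parameter subgroup) is a correct and standard argument. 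But your layer three is, as you yourself concede, not an argument but a statement of the remaining problem; so the proposal does not prove the conjecture, and it could not be expected to, since the paper only establishes the case $a=2$ (in \cite{Thesis}) and explicitly defers the general case to future work.

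Beyond the gap you acknowledge, there is a second, unacknowledged one in layer three. You claim that ``exactly as in the proof of Proposition~\ref{Prop:1-to-1 corr between torus fixed points and G-bricks}'' every $\T$-fixed point of $\mth$ corresponds to $C(\Gamma)$ for a $\theta$-stable $G$-prebrick $\Gamma$. But that proof is only valid for fixed points lying in $\yth$: it chooses a one-parameter subgroup $\lambda\colon\C\mul\to T\subset\yth$ with $\lim_{t\to 0}\lambda(t)=p$, i.e.\ it uses that $p$ lies in the closure of the torus, and it is the resulting family $\sV$ with generic free support that produces the Laurent-monomial basis and verifies the prebrick axioms (notably condition (iii) of Definition~\ref{Def:G-prebrick}). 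For a hypothetical fixed point off $\yth$ no such family exists, so even the reduction of the conjecture to the combinatorial assertion ``the Danilov bricks of $\gr(r,a)$ exhaust all $\theta$-stable torus-invariant configurations'' requires a separate argument that torus-invariant $\theta$-stable $G$-constellations away from the birational component still admit prebrick bases (the examples of \cite{CMTb} are $G$-clusters, where cyclicity forces a $G$-graph structure; a general $G$-constellation is not cyclic). Your proposed induction through $\phi_2$ and $\phi_3$, modeled on Lemma~\ref{Lem:our stability}, runs that lemma in the wrong direction: the lemma shows each $\Gamma\in\gr(r,a)$ is $\theta$-stable by descending submodules to the sides, whereas ruling out exotic stable objects requires controlling \emph{all} torus-invariant configurations for the chamber $\wca$, and nothing in the paper's machinery (including Proposition~\ref{Prop:generated by a elements}) yet accomplishes this. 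Naming that obstacle is not the same as overcoming it.
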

Proposition~\ref{Prop:not change deformation space} implies that the irreducible component $\yth$ is actually a connected component. In addition, if every torus invariant $\theta$-stable $G$-constellation lies over the birational component $\yth$, then $\mth$ is irreducible. 
For $a=2$, we can prove Conjecture~\ref{Con:M_theta irreducible} so the economic resolution is isomorphic to $\mth$ for $\theta\in\gr(r,a)$ (See \cite{Thesis}). We hope to establish this more generally in future work.
\begin{Rem}
By construction, 
$\mzero=\Spec \C[\rg]^{\gld}$ is the moduli space of ${0}$-semistable $G$-constellations up to $S$-equivalence. Since there exists an algebra isomorphism
$
\C[\rg]^{\gld} \rightarrow \C[x,y,z]^G$, $\mzero$ is isomorphic to $\C^3/G$. In particular, $\mzero$ is irreducible.\erem
\section{K\k{e}dzierski's GIT chamber}\label{Sec:Kedzierski's GIT chamber}
K\k{e}dzierski\cite{Ked14} described his GIT cone in $\Theta$ using a set of inequalities. Using his lemma, we can prove further that the cone is actually a GIT chamber $\wc$. 
In this section, we provide a description of $\wc$ using the $A_{r-1}$ root system. 
Define
\[
\grx=\{\Gamma\in \gr(r,a) \st x\not\in\Gamma \}.
\]
\subsubsection*{\bf K\k{e}dzierski's lemma}
By the same argument as in Lemma 6.7 of \cite{Ked14}, we can prove that it suffices to check the $\theta$-stability for $G$-bricks $\Gamma$ not containing $x$.
\begin{Lem}[K\k{e}dzierski's lemma\cite{Ked14}]\label{Lem:it suffices to check for Gamma without x}
For a parameter $\theta\in \Theta$, the following are equivalent.
\begin{enumerate}
\item Every $\Gamma \in \gra$ is $\theta$-stable. 
\item Every $\Gamma \in \grx$ is $\theta$-stable.
\end{enumerate}
\end{Lem}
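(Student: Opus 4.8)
The implication (i) $\Rightarrow$ (ii) is immediate, since $\grx \subseteq \gra$. All the content lies in the converse, and the plan is to show that the stability inequalities imposed by the $G$-bricks containing $x$ are already contained among the inequalities imposed by the $x$-free bricks. First I would exploit that $\theta$ is a function of the weight alone: for a $G$-brick $\Gamma$ and a submodule $\sG\subseteq C(\Gamma)$ with $\C$-basis $A\subseteq\Gamma$ (described combinatorially in Lemma~\ref{Lem:combinatorial description of submodule}), the value $\theta(\sG)=\sum_{\bm_\rho\in A}\theta(\rho)$ depends only on the weight-set $\wt(A)=\{\rho\st\bm_\rho\in A\}\subseteq G\dual$. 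Hence $\Gamma$ is $\theta$-stable if and only if $\theta(W)>0$ for every weight-set $W$ coming from a nonzero proper submodule. Writing $\sW(\Gamma)$ for this finite family of weight-subsets, the lemma reduces to the purely combinatorial claim that for every $\Gamma\in\gra$ with $x\in\Gamma$ one has $\sW(\Gamma)\subseteq\bigcup_{\Gamma'\in\grx}\sW(\Gamma')$. Granting this, every inequality forcing $\theta$-stability of an $x$-brick is one of the inequalities forcing $\theta$-stability of the $x$-free bricks, so (ii) $\Rightarrow$ (i).

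To prove this combinatorial claim I would induct on $r$, following the recursive construction of $\gra$ in Proposition~\ref{Prop:there exists a set of G-bricks}. In the base cases $a=1$ and $a=r-1$ the explicit bricks of \eqref{Eqtn:G-bricks for a=1} are available, and the monomial $x$ shares its weight with exactly one other coordinate (namely $\wt(x)=\wt(y)$ when $a=1$, and $\wt(x)=\wt(z)$ when $a=r-1$). The corresponding relabeling ($x\leftrightarrow y$, resp. $x\leftrightarrow z$) carries each $x$-containing brick $\Gamma_{r+i}$ to the $x$-free brick $\Gamma_i$; it is weight-preserving and is an isomorphism of the divisibility posets that govern the submodules, so $\sW(\Gamma_{r+i})=\sW(\Gamma_i)$.

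For the inductive step I use the identity $x\in\Gamma\iff\xi_k\in\Gamma'$, valid because $\phi_k(x)=\xi_k$ for $k=2,3$. Thus an $x$-brick of $G$ pulled back from $G_k$ via $\phi_k$ corresponds to a $\xi_k$-brick of $G_k$; the induction hypothesis for $G_2$ and $G_3$ (whose orders are strictly smaller) supplies a weight-matching $\xi_k$-free $G_k$-brick, which I pull back by $\phi_k$ to obtain the desired $x$-free $G$-brick. The bottom brick $\Gamma_0=\{1,x,\dots,x^{r-1}\}$ of Case~(1) must be handled on its own, since $\phi_1(x)=\one$ makes it a pullback along the central round-down of the trivial brick, whose stability is vacuous; here I match its submodule weight-sets, which are precisely the tails $\{j,j+1,\dots,r-1\}$, directly against submodules of the $x$-free bricks produced by the $\phi_2$/$\phi_3$ recursion.

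The main obstacle is the transfer of submodules across the pullback $\phi_k$: one must show that if two $G_k$-bricks $\Gamma'$, $\Gamma''$ satisfy $\sW(\Gamma')=\sW(\Gamma'')$, then their $\phi_k$-pullbacks $\Gamma$, $\tilde\Gamma$ satisfy $\sW(\Gamma)=\sW(\tilde\Gamma)$. The subtlety is that a submodule of $C(\Gamma)$ need not be a union of full $\phi_k$-fibres: over each base element it may contain only a tail of the associated chain, by Lemma~\ref{Lem:localizations,lacing}(iv). The point to establish is that each fibre of $\phi_k$ carries a weight-set $\phi_k^{-1}(\chi)\subseteq G\dual$, together with an ordering dictated by the round-down data, that depends only on $\chi\in G_k\dual$ and not on the ambient brick; with this uniformity the admissible partial-fibre tails, and hence the entire family $\sW$, are determined by the base-level data $\sW(\Gamma')$ together with the universal fibre combinatorics, so a match at the $G_k$-level lifts to a match at the $G$-level. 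Making this uniformity precise, together with the explicit matching for $\Gamma_0$, is where the real work lies; the full-fibre part then reproduces exactly the weight bookkeeping already carried out in the proof of Lemma~\ref{Lem:our stability}.
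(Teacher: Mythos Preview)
The paper does not actually prove this lemma; it simply writes ``By the same argument as in Lemma~6.7 of \cite{Ked14}'' and moves on. So there is no in-paper proof to compare against, and your proposal is an attempt to supply what the paper omits.

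Your overall strategy---reduce to the combinatorial inclusion $\sW(\Gamma)\subseteq\bigcup_{\Gamma'\in\grx}\sW(\Gamma')$ and prove it by induction along the round-down recursion---is sound, and the fibre-uniformity you sketch for the $\phi_k$-lifting is essentially correct: for $k=2$ the map $\phi_2\colon G\dual\to G_2\dual$ is $\rho\mapsto\rho\bmod a$, each fibre $\phi_2^{-1}(\chi)=\{\chi,\chi+a,\chi+2a,\ldots\}\cap[0,r)$ is a fixed arithmetic progression, and the chain order induced on it by any brick is always the increasing integer order (since multiplication by $y$ raises the weight by exactly $a$ within a fibre). So the ``universal fibre combinatorics'' you invoke really are universal, and a weight-set match at the $G_k$-level does lift.

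The genuine gap is the bottom brick $\Gamma_0=\{1,x,\ldots,x^{r-1}\}$. You assert that the tails $\{j,\ldots,r-1\}$ can be ``matched directly'' against submodules of $x$-free bricks, but you give no mechanism. This is not automatic: for instance in $\tfrac{1}{12}(1,7,5)$ the brick for $\sigma_1$ (the pure $z$-chain) does \emph{not} have $\{10,11\}$ as a submodule weight-set, and one has to locate a different $x$-free brick (e.g.\ the one for $\sigma_6$) that carries all the tails. Proving that such a brick always exists---equivalently, that some $x$-free brick has the property that every internal arrow strictly increases weight---is a separate combinatorial statement that your induction does not supply, since $\Gamma_0$ is the pullback along $\phi_1$ of the trivial brick and the inductive hypothesis for $G_1$ is vacuous. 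This is the step that would need an independent argument (or a direct appeal to K\k{e}dzierski's original proof).
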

Let $A$ be the finite group of type $\frac{1}{r}(a,r-a)$. Since $A\iso G$ as groups, the GIT parameter space $\Theta$ of $G$-constellations can be canonically identified with that of $A$-constellations.

Since $G$-constellations which $x$ acts trivially on are supported on the hyperplane $(x=0)\subset \C^3$, they 
can be considered as $A$-constellations. As $\Gamma \in \grx$ is the set of $G$-bricks corresponding to $G$-constellations supported on $(x=0)\subset \C^3$, Lemma~\ref{Lem:it suffices to check for Gamma without x} implies that the GIT chamber for $\gr(r,a)$ is equal to a GIT chamber of $A$-constellations.
\subsubsection*{\bf K\k{e}dzierski's GIT chamber}
We describe a set of simple roots $\Delta$ so that $\yth$ is isomorphic to the economic resolution for $\theta \in \wc(\Delta)$. After considering the case of $a=1$, we describe simple roots for the case of $\frac{1}{r}(1,a,r-a)$ using a recursion process.
\subsubsection*{Root system $A_{r-1}$}\label{subsec:root system}
Identify $I:=\Irr(G)$ with $\Z/r\Z$.
Let $\left\{\varepsilon_{i}\st i \in I \right\}$ be an orthonormal basis of $\Q^r$, i.e.\ $\langle\varepsilon_i,\varepsilon_j\rangle=\delta_{ij}$. 
Define 
\[
\Phi:=\{\varepsilon_i-\varepsilon_j \st i,j \in I, i\neq j \}.
\] 
Let $\hd$ be the subspace of $\Q^r$ generated by $\Phi$.
Elements in $\Phi$ are called {\em roots}.

For each nonzero $i \in I$,
set $\alpha_i=\varepsilon_i-\varepsilon_{i-a}$. 
Let $\rho_i$ denote the irreducible representation of $G$ of weight $i$. Note that each root $\alpha$ can be considered as the support of a submodule of a $G$-constellation. In other words, $\alpha_i$ corresponds to the dimension vector of $\rho_i$. In general we consider a root $\alpha=\sum_{i} n_i \alpha_i$ as the dimension vector of the representation $\oplus n_i \rho_{i}$. Abusing notation, let $\alpha=\sum_{i} n_i \alpha_i$ denote the corresponding representation $\oplus n_i \rho_{i}$.

Let $\Delta$ be a set of simple roots. Define $\wc(\Delta)\subset\Theta$ associated to $\Delta$ as
\[
\wc(\Delta):=\{\theta \in \Theta \st \theta(\alpha) >0 \quad \forall \alpha \in \Delta\}.
\]
Note that for the cone $\Theta_{+}$ for $\GHil$ in \eqref{Eqtn:Stab for G-Hilb}, the corresponding set of simple roots is
\[
\Delta_+=\{\varepsilon_{i}-\varepsilon_{i-a} \in \Phi \st i \in I, i \neq 0 \}=\{\alpha_i \st i \in I, i \neq 0 \}.
\]
\subsubsection*{The case of $\frac{1}{r}(1,r-1,1)$.}
From Theorem~\ref{Thm:Ked. a=1 smooth}, we know that the economic resolution of $X=\C^3/G$ is isomorphic to $\GHilb{3}$ if $G$ is of type $\frac{1}{r}(1,r-1,1)$. Thus in this case, the $G$-bricks are just Nakamura's $G$-graphs, which are $\theta$-stable for $\theta \in \Theta_+$,
where
\[
\Theta_{+} := \left\{\theta \in \Theta \st \theta\left(\rho\right) >0 \text{ for } \rho \neq \rho_0 \right\}.
\]
The corresponding set of simple roots is
\begin{align*}
\Delta&=\left\{\varepsilon_{i}-\varepsilon_{i+1} \in \Phi \st i \in I, i \neq 0 \right\}\\
&=\left\{\varepsilon_{1}-\varepsilon_{2},\varepsilon_{2}-\varepsilon_{3},\ldots, \varepsilon_{r-1}-\varepsilon_{0}\right\}.
\end{align*}

\begin{Eg}\label{Eg:Simple roots 1/3(1,2,1) and 1/4(1,3,1)}
For the group of type $\frac{1}{3}(1,2,1)$, let $\left\{\varepsilon^L_j\st j=0,1,2\right\}$ be the standard basis of $\Q^3$. The corresponding set of simple roots is
\[
\Delta^L=\left\{\varepsilon^L_1-\varepsilon^L_2,\varepsilon^L_2-\varepsilon^L_0\right\}.
\]

Similarly, for the group of type $\frac{1}{4}(1,3,1)$ with $\left\{\varepsilon^R_k\st k=0,1,2,3\right\}$ the standard basis of $\Q^4$,
\[
\Delta^R=\{\varepsilon^R_1-\varepsilon^R_2,\varepsilon^R_2-\varepsilon^R_3,
\varepsilon^R_3-\varepsilon^R_0\}
\]
is the corresponding set of simple roots for type $\frac{1}{4}(1,3,1)$.
\eeg
\vskip 2mm
\subsubsection*{The case of $\frac{1}{r}(1,a,r-a)$.}
Let $G$ be the group of type $\frac{1}{r}(1,a,r-a)$. Let $\Delta^L$ and $\Delta^R$ denote the sets of simple roots for the types of $\lgr$ and of $\rgr$, respectively. 
As in Section~\ref{subsec:root system}, let 
\begin{align*}
\left\{\varepsilon^L_l \st l=0,1,\ldots,a-1\}, \quad \{\varepsilon^R_k \st k=0,1,\ldots,r-a-1\right\}
\end{align*}
be the standard basis of $\Q^a$ and $\Q^{r-a}$, respectively.

From the two sets $\Delta^L$ and $\Delta^R$, we construct a set $\Delta$ of simple roots in $A_{r-1}$ as follows.
First, as in Section~\ref{subsec:root system}, let the standard basis $\left\{\varepsilon_i\st i \in I \right\}$ of $\Q^r$ be identified with the union of the two sets 
\begin{align*}
\left\{\varepsilon^L_l \st l=0,1,\ldots,a-1\} \text{ and } \{\varepsilon^R_k \st k=0,1,\ldots,r-a-1\right\}
\end{align*}
using the following identification:
\begin{equation}\label{Eqtn:identification of basis}
\left\{
\begin{array}{llll}
\varepsilon^L_l &= \varepsilon_i &\text{ with } i \equiv l \!\!\! \mod a &\text{if } r-a \leq i <r,\\
\varepsilon^R_k &= \varepsilon_i &\text{ with } i \equiv k \!\!\!\! \mod (r\!-\!a)  &\text{if } 0 \leq i <r-a.
\end{array}
\right.
\end{equation}
With the identification above, define $\Delta$ to be
\begin{equation}\label{Eqtn:Def.Admissible roots}
\Delta = \Delta^L \cup \{\varepsilon_{\lf \frac{r-1}{a}\rf a}-\varepsilon_{\lf\frac{r-1}{r-a}\rf(r-a)-a}\}\cup \Delta^R .
\end{equation}
The root $\varepsilon_{\lf \frac{r-1}{a}\rf a}-\varepsilon_{\lf\frac{r-1}{r-a}\rf(r-a)-a}$ is called the {\em added root} in $\Delta$.
Note that $\Delta$ is actually a set of simple roots in $A_{r-1}$.
\begin{Def}
With $\Delta$ as above, the corresponding Weyl chamber 
\[
\wc(r,a):=\wc(\Delta)=\{\theta \in \Theta \st \theta(\alpha) >0 \quad \forall \alpha \in \Delta \}
\]
is called {\em K\k{e}dzierski's GIT chamber} for $G=\frac{1}{r}(1,a,r-a)$.
\end{Def}
\begin{Prop} Let $\wca$ be K\k{e}dzierski's GIT chamber.\begin{enumerate}
\item The parameter $\vtheta$ in \eqref{Eqtn:def. of vartheta} is a ray of $\wc(r,a)$.
\item Any $G$-brick in $\gr(r,a)$ is $\theta$-stable for $\theta\in\wca$.
\item The cone $\wc(r,a)$ is a full GIT chamber.
\end{enumerate}
\end{Prop}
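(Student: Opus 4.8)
The plan is to organise all three parts around a single backbone: reduce stability to the surface quotient and the $A_{r-1}$ root system via K\k{e}dzierski's lemma, and run an induction along the recursion $\Delta=\Delta^L\cup\{\text{added root}\}\cup\Delta^R$ of \eqref{Eqtn:Def.Admissible roots}, with base case $a=1$ (where $\wca=\Theta_+$, the $\GHil$ chamber, by Theorem~\ref{Thm:Ked. a=1 smooth}). The technical device I would isolate first is the compatibility of the identification \eqref{Eqtn:identification of basis} with the round-down maps: for a simple root $\alpha\in\Delta^L$ sitting over the $G_2$-irreducible $\chi$, its image in $A_{r-1}$, read as a dimension vector in $R(G)\otimes\Q$, is congruent modulo the regular representation $\C[G]$ to the $\phi_2$-pullback $\sum_{\phi_2(\rho)=\chi}\rho$. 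Since $\Theta=\{\theta:\theta(\C[G])=0\}$, this produces the linear identity $\theta(\alpha)=\sum_{\phi_2(\rho)=\chi}\theta(\rho)$ for every $\theta\in\Theta$, and symmetrically for $\Delta^R$ and $\phi_3$. I expect this to follow from the definitions of $\phi_2,\phi_3$ and \eqref{Eqtn:identification of basis} propagated through the recursion; it is essentially the content of the assertion that $\Delta$ is a genuine simple system, and verifying it cleanly is where most of the bookkeeping lives.

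Granting the identity, part~(i) is immediate. By the footnote to \eqref{Eqtn:def. of vartheta} one has $\sum_{\phi_k(\rho)=\chi}\vtheta(\rho)=0$ for $k=2,3$, so the identity gives $\vtheta(\alpha)=0$ for every $\alpha\in\Delta^L\cup\Delta^R$. These $r-2$ roots are linearly independent, so their common zero locus in $\Theta$ is one-dimensional; hence $\R\vtheta$ is exactly the edge of the simplicial cone $\overline{\wca}$ opposite the added-root facet. A direct evaluation, writing the added root as a telescoping chain of the $\alpha_i$ and summing \eqref{Eqtn:def. of vartheta}, shows $\vtheta(\text{added root})>0$, which fixes the orientation so that $\vtheta\in\overline{\wca}$. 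Thus $\vtheta$ is a ray of $\wca$.

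For part~(iii) I would pass to the surface group $A=\tfrac1r(a,r-a)$: by K\k{e}dzierski's lemma (Lemma~\ref{Lem:it suffices to check for Gamma without x}) the stability of the whole family $\gra$ is detected by $\grx$, whose constellations are supported on $(x=0)$ and hence are $A$-constellations. For a cyclic surface quotient of type $A_{r-1}$ the GIT wall-and-chamber structure on $\Theta$ is classical (Kronheimer; see also Craw--Ishii~\cite{CI}): the walls are precisely the root hyperplanes $\{\theta:\theta(\alpha)=0\}$ with $\alpha\in\Phi$, and the chambers are exactly the Weyl chambers. Since $\Delta$ is a system of simple roots of $A_{r-1}$, the open simplicial cone $\wca=\wc(\Delta)$ is full-dimensional, meets no root hyperplane in its interior, and has each facet on a root hyperplane; therefore it is a single full GIT chamber. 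The point needing care — and the main obstacle of the whole proposition — is that K\k{e}dzierski's lemma genuinely transports the relevant wall structure from the $3$-fold to the surface, so that the family $\gra$ introduces no further walls.

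Finally part~(ii). With (iii) in hand it suffices to exhibit one parameter of $\wca$ at which every $\Gamma\in\gra$ is stable and then invoke constancy of stability on a chamber. Lemma~\ref{Lem:our stability} supplies exactly this for $\theta=\theta_P+m\vtheta$ with $m\gg0$, so I only need $\theta_P+m\vtheta\in\wca$. Using the identity of the first paragraph, for $\alpha\in\Delta^L$ over the node $\chi$ one computes $(\theta_P+m\vtheta)(\alpha)=\sum_{\phi_2(\rho)=\chi}(\theta_P+m\vtheta)(\rho)=\theta^{(2)}(\chi)>0$, the $m$-term vanishing by the footnote and $\theta^{(2)}\in\wc(\Delta^L)$ holding by induction via \eqref{Eqtn:linear equations}; symmetrically $(\theta_P+m\vtheta)(\alpha)>0$ on $\Delta^R$, while $(\theta_P+m\vtheta)(\text{added root})>0$ for $m\gg0$ by part~(i). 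Hence $\theta_P+m\vtheta\in\wca$, and (ii) follows. Conceptually the same identity shows that for any $\theta\in\wca$ the left and right restrictions lie in $\wc(\Delta^L)$ and $\wc(\Delta^R)$, so that every proper submodule support is a positive root for $\Delta$ and therefore evaluates positively under $\theta$; this is the root-theoretic reformulation underlying the constancy statement.
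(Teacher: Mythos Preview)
Your approach is essentially the paper's: reduce via K\k{e}dzierski's lemma (Lemma~\ref{Lem:it suffices to check for Gamma without x}) to the surface group $A=\tfrac1r(a,r-a)$, invoke Kronheimer's identification of the $A$-constellation chamber structure with the Weyl chambers of $A_{r-1}$, and then check that the specific parameter $\theta=\theta_P+m\vtheta$ of Lemma~\ref{Lem:our stability} lies in $\wca$. The paper interleaves (ii) and (iii) where you separate them, but the logical content is the same; in particular the paper's line ``every parameter in $\wca$ satisfies the system of equations \eqref{Eqtn:linear equations} for some $\theta^{(2)}\in\wc(a,-r)$ and $\theta^{(3)}\in\wc(r-a,r)$'' is exactly your linear identity put to work.

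One correction to your formulation: the linear identity is misstated. A simple root $\alpha\in\Delta^L$ does \emph{not} in general sit over a single $G_2$-irreducible $\chi$. Already at the second recursion level this fails: for $G=\tfrac{1}{12}(1,7,5)$ one has $G_2=\tfrac{1}{7}(1,2,5)$, and the simple root $\varepsilon_5^L-\varepsilon_6^L\in\Delta^L$ decomposes as $\alpha_5^{(2)}+\alpha_3^{(2)}+\alpha_1^{(2)}$, i.e.\ corresponds to $\chi_5\oplus\chi_3\oplus\chi_1$, not to a single character. The correct statement --- which is what the paper uses and what your argument actually needs --- is that the embedding $\iota_L\colon\hd_{G_2}\hookrightarrow\hd_G$ of root lattices induced by \eqref{Eqtn:identification of basis} satisfies $\theta(\iota_L(\alpha))=\theta^{(2)}(\alpha)$ for every root $\alpha$ of $A_{a-1}$, where $\theta^{(2)}(\chi):=\sum_{\phi_2(\rho)=\chi}\theta(\rho)$. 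With the identity phrased this way, your argument for all three parts goes through unchanged.
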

\begin{proof} 
We may assume $a<r-a$. First, by construction, $\vtheta$ is zero on the sets $\Delta^L$ and $\Delta^R$ with the identification \eqref{Eqtn:Def.Admissible roots}. To prove (i), it remains to show that $\vtheta(\alpha)$ is positive where $\alpha$ is the added root in $\Delta$. Since
\[
\alpha=\varepsilon_{\lf \frac{r-1}{a}\rf a}-\varepsilon_{\lf\frac{r-1}{r-a}\rf(r-a)-a}=\sum_{\phi_2(\rho_i) =\chi_0} \alpha_i +\alpha_{r-a},
\]
where $\chi_0$ is the trivial representation of $G_2$, (i) follows.

For $\theta$ defined by~\eqref{Eqtn:stability parameter m large}, every $\Gamma \in \grx$ is $\theta$-stable. For the group $A$ of type $\frac{1}{r}(a,-a)$, Kronheimer\cite{K89} showed that the chamber structure of the GIT parameter space of $A$-constellations is the same as the Weyl chamber structure of $A_{r-1}$\footnote{For an explicit description, see Section 5.1 in \cite{Thesis}}. Thus for $\grx$ considered as $A$-constellations, we have a Weyl chamber of the $A_{r-1}$ root system containing the parameter $\theta$. 

By K\k{e}dzierski's lemma, to prove (ii), it suffices to show that $\wc(r,a)$ contains the parameter $\theta$.
Observe that every parameter in $\wca$ satisfies the system of equations \eqref{Eqtn:linear equations} for some $\theta^{(2)}\in \wc(a,-r)$ and $\theta^{(3)}\in\wc(r-a,r)$ by construction. Since $\vtheta$ in \eqref{Eqtn:def. of vartheta} is a ray of the chamber $\wca$, it follows that $\theta \in \wca$.

It remains to prove (iii). By considering $G$-constellations supported on the hyperplane $(x=0)\subset \C^3$, it follows that any facet of $\wca$ is an actual GIT wall in $\Theta$. Therefore K\k{e}dzierski's GIT chamber $\wca$ is a full GIT chamber in the stability parameter space $\Theta$ (see \cite{Thesis, Chambers}). 
\end{proof}
\begin{Prop}\label{Prop:generated by a elements} 
Assume that $a < r-a$. Let $\theta$ be an element in $\wca$. Then $\theta(\alpha_i)$ is negative if and only if $0 \leq i <a$. Thus any $\theta$-stable $G$-constellation is generated by $\rho_{0}, \rho_{1}, \ldots, \rho_{a-1}$.
\end{Prop}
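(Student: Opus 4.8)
The plan is to split the proposition into its two assertions and treat the displayed sign statement as the real content. The final clause follows formally: if a $\theta$-stable $G$-constellation $\sF$ were not generated by $\rho_0,\dots,\rho_{a-1}$, then its top would involve some $\rho_j$ with $j\geq a$, so $\rho_j$ would be a one-dimensional quotient of $\sF$ and its kernel $\sG$ a nonzero proper submodule with $\theta(\sG)=-\theta(\rho_j)=-\theta(\alpha_j)$, using $\theta(\C[G])=0$. The moment we know $\theta(\alpha_j)>0$ for all $j\geq a$, this forces $\theta(\sG)<0$, contradicting $\theta$-stability; so it remains to prove the sign statement.

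To prove it I would first convert signs into root positivity. By K\k{e}dzierski's lemma together with Kronheimer's identification of the GIT chamber structure with the Weyl chamber structure of $A_{r-1}$ (both recalled above), $\wca=\wc(\Delta)$ is an open Weyl chamber; hence no root hyperplane meets it, so for every root $\alpha$ the number $\theta(\alpha)$ is nonzero and of constant sign on $\wca$, positive exactly when $\alpha\in\Phi^{+}(\Delta)$. Since $\theta(\alpha_i)=\theta(\rho_i)$, the task becomes: decide for each $i$ whether $\alpha_i=\varepsilon_i-\varepsilon_{i-a}$ is a positive or a negative root of $\Delta$.

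The two outer ranges come for free from $\vtheta$. By the preceding proposition $\vtheta\in\overline{\wca}$, and by \eqref{Eqtn:def. of vartheta} we have $\vtheta(\alpha_i)=\vtheta(\rho_i)=-1$ for $0\leq i<a$ and $=+1$ for $r-a\leq i<r$ (the hypothesis $a<r-a$ is exactly what makes these ranges disjoint). As $\theta(\alpha_i)$ has constant nonzero sign on $\wca$ and $\vtheta$ is a limit of interior points, these signs are inherited, giving $\alpha_i$ negative on $[0,a)$ and positive on $[r-a,r)$. For the middle range $a\leq i<r-a$, where $\vtheta$ vanishes, I would induct on the modulus $r$ via the right-hand recursion: here both $i$ and $i-a$ lie in $[0,r-a)$, so \eqref{Eqtn:identification of basis} identifies $\alpha_i$ with the $G_3$-root $\varepsilon^{R}_i-\varepsilon^{R}_{i-a}$ lying in the sub-root-system spanned by $\Delta^{R}\subseteq\Delta$; positivity for $\Delta$ is then equivalent to positivity for $\Delta^{R}$, which the inductive hypothesis for $G_3$ (of modulus $r-a<r$) decides, yielding $\theta(\alpha_i)>0$. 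The base case is the $G$-Hilbert chamber $\Theta_{+}$ of type $\frac1r(1,r-1,1)$, where $\theta(\alpha_i)=\theta(\rho_i)$ is negative only for $i=0$.

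The main obstacle is setting the induction up so that it actually closes. The delicate point is that the literal statement ``$\theta(\alpha_i)<0\iff i\in[0,a)$'' is false once the parameter exceeds half the modulus --- for instance for $\frac15(1,3,2)$ one computes the negative range to be $[0,2)$ rather than $[0,3)$ --- and the recursion to $G_3=\frac1{r-a}(1,a,r-2a)$ may land precisely in that regime. One must therefore carry the symmetric inductive statement ``$\theta(\alpha_i)<0\iff i\in[0,\min(a,r-a))$'', of which this proposition is the case $a<r-a$; with this form the middle-range conclusion above is automatic since $i\geq a\geq\min(a,r-2a)$. The remaining work, which I expect to be the genuinely fiddly part, is checking that \eqref{Eqtn:identification of basis} really realizes $\Delta^{R}$ (and symmetrically $\Delta^{L}$, needed when the recursion produces a parameter larger than half the modulus) as a standard sub-root-system in which $\alpha_i$ matches the corresponding $G_3$- (resp.\ $G_2$-) root, so that root positivity transfers cleanly between the levels of the recursion.
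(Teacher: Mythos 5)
Your proof is correct and follows the paper's strategy in outline --- identify $\wca$ with the open Weyl chamber $\wc(\Delta)$, so that the sign of $\theta(\alpha_i)$ is the sign of the root $\alpha_i=\varepsilon_i-\varepsilon_{i-a}$ with respect to $\Delta$, then treat the three ranges $[0,a)$, $[a,r-a)$, $[r-a,r)$ separately with a recursion into $\Delta^R$ for the middle one --- but it differs in two worthwhile ways. For the outer ranges the paper argues purely combinatorially: by the identification \eqref{Eqtn:identification of basis} and the choice of added root, every $\varepsilon^L$ precedes every $\varepsilon^R$ in the linear order that $\Delta$ imposes on the basis, so $\varepsilon^R_k-\varepsilon^L_l$ is automatically a negative root and $\varepsilon^L_k-\varepsilon^R_l$ a positive one; your alternative of reading the signs off the boundary ray $\vtheta$ is shorter and equally valid, at the cost of importing part (i) of the preceding proposition. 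For the middle range the paper says only that ``a recursive argument'' gives positivity in $\Delta^R$; you are right that the induction closes only if the hypothesis is carried in the symmetric form $\theta(\alpha_i)<0\iff 0\le i<\min(a,r-a)$ (your $\tfrac15(1,3,2)$ computation is correct: the negative range there is $[0,2)$, with simple roots $\varepsilon_4-\varepsilon_3,\ \varepsilon_2-\varepsilon_4,\ \varepsilon_3-\varepsilon_1,\ \varepsilon_1-\varepsilon_0$), and making that explicit is a genuine improvement on the paper, whose proof is silent on the point. Your deduction of the final clause --- a $\rho_j$ in the top with $j\ge a$ would give a proper submodule $\sG$ with $\theta(\sG)=-\theta(\rho_j)<0$ --- is also fine; the paper states that clause without proof.
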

\begin{proof}
Let $\Delta$ be the set of simple roots corresponding to $\wca$. Recall that any positive sum of simple roots is positive on $\theta$.

Suppose that $0 \leq i <a$. From the identification~\eqref{Eqtn:identification of basis}, note that $\varepsilon_{i}$ is identified with $\varepsilon^R_k$ for some $k$ and that $\varepsilon_{i-a}=\varepsilon_{i+(r-a)}$ is identified with $\varepsilon^L_l$ for some $l$.
Note that $\varepsilon_{\lf \frac{r-1}{a}\rf a}$ is identified with a vector $\varepsilon^L$ and that $\varepsilon_{\lf\frac{r-1}{r-a}\rf(r-a)-a}$ is identified with a vector $\varepsilon^R$. Since we added the root $\varepsilon_{\lf \frac{r-1}{a}\rf a}-\varepsilon_{\lf\frac{r-1}{r-a}\rf(r-a)-a}$ to $\Delta$, the root $\alpha_i =\varepsilon_{i}-\varepsilon_{i-a}=\varepsilon^R_k-\varepsilon^L_l$ is a negative sum of simple roots in $\Delta$.

Suppose that $a \leq i <r-a$. The root $\alpha_i=\varepsilon_{i}-\varepsilon_{i-a}$ is a sum of simple roots in $\Delta^R$. A recursive argument yields that $\alpha_i$ is a positive sum of simple roots in $\Delta^R$. Thus $\alpha_i$ is a positive sum of simple roots in $\Delta$.

Consider the case where $r-a \leq i <r$ and the root $\alpha_i=\varepsilon_{i}-\varepsilon_{i-a}$.
From the identification~\eqref{Eqtn:identification of basis}, $\varepsilon_{i}$ is identified with $\varepsilon^L_k$ for some $k$ and $\varepsilon_{i-a}$ is identified with $\varepsilon^R_l$ for some $l$.
Thus $\alpha_i=\varepsilon^L_k-\varepsilon^R_l$ is a positive sum of simple roots in $\Delta$ with the same reason as the case where $0 \leq i <a$.
\end{proof}
\begin{Eg}\label{Eg:Simple roots 1/7(1,3,4)}
Let $G$ be the group of type $\frac{1}{7}(1,3,4)$. From the fan of the economic resolution of this case (see Example~\ref{Eg:econ. resolns of 1/7(1,3,4)}), the left and right sides are the economic resolutions of 
singularities of $\frac{1}{3}(1,2,1)$ and $\frac{1}{4}(1,3,1)$, respectively. By Example~\ref{Eg:Simple roots 1/3(1,2,1) and 1/4(1,3,1)}, we have two sets
\[
\Delta^L=\{\varepsilon^L_1-\varepsilon^L_2,\varepsilon^L_2-\varepsilon^L_0\} \text{ and }
\Delta^R=\{\varepsilon^R_1-\varepsilon^R_2,\varepsilon^R_2-\varepsilon^R_3,
\varepsilon^R_3-\varepsilon^R_0\}.
\]
As in the construction~\eqref{Eqtn:Def.Admissible roots}, the corresponding set of simple roots is
\begin{align*}
\Delta&=\{\varepsilon_4-\varepsilon_5,\varepsilon_5-\varepsilon_6,\underline{\varepsilon_6-\varepsilon_1},\varepsilon_1-\varepsilon_2,\varepsilon_2-\varepsilon_3,\varepsilon_3-\varepsilon_0\}\\
&=\{\alpha_4+\alpha_1, \alpha_5+\alpha_2, \underline{-\alpha_1-\alpha_5-\alpha_2}, \alpha_1+\alpha_5,\alpha_2+\alpha_6, \alpha_3 \},
\end{align*}
where the underlined root is the added root as in~\eqref{Eqtn:Def.Admissible roots}.
Thus the set of parameters $\theta\in\Theta$ satisfying 
\[
\begin{array}{ccc}
\theta(\rho_4\oplus\rho_1)>0, &\theta(\rho_5\oplus\rho_2)>0, &\theta(\rho_1\oplus\rho_5\oplus\rho_2)<0, \\
\theta(\rho_1\oplus\rho_5)>0, &\theta(\rho_2\oplus\rho_6)>0, &\theta(\rho_3)>0 
\end{array}
\]
is K\k{e}dzierski's GIT chamber $\wca$ where $\rho_i$ is the irreducible representation of $G$ of weight $i$.

The rays of the chamber $\wca$ are the row vectors of the matrix
\[
\left(
\begin{array}{rrrrrrr}
-1&0&0&1&0&0&0\\
-1&0&0&0&0&0&1\\
-1&0&-1&0&0&1&1\\
-1&-1&-1&0&1&1&1\\
-1&-1&0&0&1&1&0\\
-1&0&0&0&1&0&0
\end{array}
\right)
\]
with the dual basis $\{\theta_i\}$ with respect to $\{\rho_i\}$.
Observe that for any $\theta \in \wca$, $\theta(\rho_i)$ is negative if and only if $0 \leq i <3$.\eeg

\section{Example: type $\frac{1}{12}(1,7,5)$}\label{Sec:example for 1/12(1,7,5)}
\begin{figure}
\begin{center}
\begin{tikzpicture}
\coordinate [label=left:$e_3$] (e3) at (0,0);
\coordinate [label=right:$e_2$] (e2) at (12,0);
\foreach \x in {1,2,3,4,5,6,7,8,9,10,11}
    \draw (\x ,2pt) -- (\x ,-2pt);
\foreach \y in {1,2,3,4,5,6,7,8,9,10,11}
    \draw (2pt,\y) -- (-2pt,\y);    

\coordinate [label=left:$v_1$] (v1) at (1,7);
\draw[fill] (v1) circle [radius=0.05];
\coordinate [label=below:$v_2$] (v2) at (2,2);
\draw[fill] (v2) circle [radius=0.05];
\coordinate [label=left:$v_3$] (v3) at (3,9);
\draw[fill] (v3) circle [radius=0.05];
\coordinate [label=below:$v_4$] (v4) at (4,4);
\draw[fill] (v4) circle [radius=0.05];
\coordinate [label=left:$v_5$] (v5) at (5,11);
\draw[fill] (v5) circle [radius=0.05];
\coordinate [label=below right:$v_6$] (v6) at (6,6);
\draw[fill] (v6) circle [radius=0.05];
\coordinate [label=below:$v_7$] (v7) at (7,1);
\draw[fill] (v7) circle [radius=0.05];
\coordinate [label=below left:$v_8$] (v8) at (8,8);
\draw[fill] (v8) circle [radius=0.05];
\coordinate [label=below:$v_9$] (v9) at (9,3);
\draw[fill] (v9) circle [radius=0.05];
\coordinate [label=above left:$v_{10}$] (v10) at (10,10);
\draw[fill] (v10) circle [radius=0.05];
\coordinate [label=below right:$v_{11}$] (v11) at (11,5);
\draw[fill] (v11) circle [radius=0.05];
\foreach \t in {2,3}
\draw[fill] (e\t) circle [radius=0.05];
\draw (e3) -- (e2);
\draw (e3) -- (0,12);
\draw (e2) -- (12,12);
\draw (e3) -- (v1);
\draw (e3) -- (v6);
\draw (v6) -- (v7);
\draw (e3) -- (v7);
\draw (v7) -- (e2);
\draw (v1) -- (v2);
\draw (v2) -- (v3);
\draw (v3) -- (v4);
\draw (v4) -- (v5);
\draw (v5) -- (v6);
\draw (v6) -- (v7);
\draw (v7) -- (v8);
\draw (v7) -- (v11);
\draw (v9) -- (e2);
\draw (v8) -- (v9);
\draw (v9) -- (v10);
\draw (v10) -- (v11);
\draw (v11) -- (e2);
\draw (v4) -- (v7);
\draw (v2) -- (v7);
\foreach \z in {1,2,3,4,5,6,7,8,9,10,11}
\draw (v\z) -- (\z,12);
\foreach \t in {1,2,3,4,5,6,7,8,9,10,11,12}
\node [right] at (12.2-\t,9.8) {$\sigma_{\t}$};
\node [right] at (10.5,3) {$\tau_{1}$};
\node [right] at (9.6,6) {$\tau_{2}$};
\node [right] at (7.8,4.5) {$\tau_{3}$};
\node [right] at (9,1.5) {$\tau_{4}$};
\node [right] at (5.5,3.5) {$\tau_{5}$};
\node [right] at (4.7,6.5) {$\tau_{6}$};
\node [right] at (2.7,4.3) {$\tau_{7}$};
\node [right] at (3.9,2.3) {$\tau_{8}$};
\node [right] at (0.7,3) {$\tau_{9}$};
\node [right] at (3,1) {$\tau_{10}$};
\node [right] at (6,0.5) {$\tau_{0}$};
\node [right] at (12.1,11.8) {$\uparrow e_1$};
\end{tikzpicture}
\end{center}
\caption{Toric fan of the economic resolution for $\frac{1}{12}(1,7,5)$}\label{Fig:Toric Fan of 1/12(1,7,5)}
\end{figure}
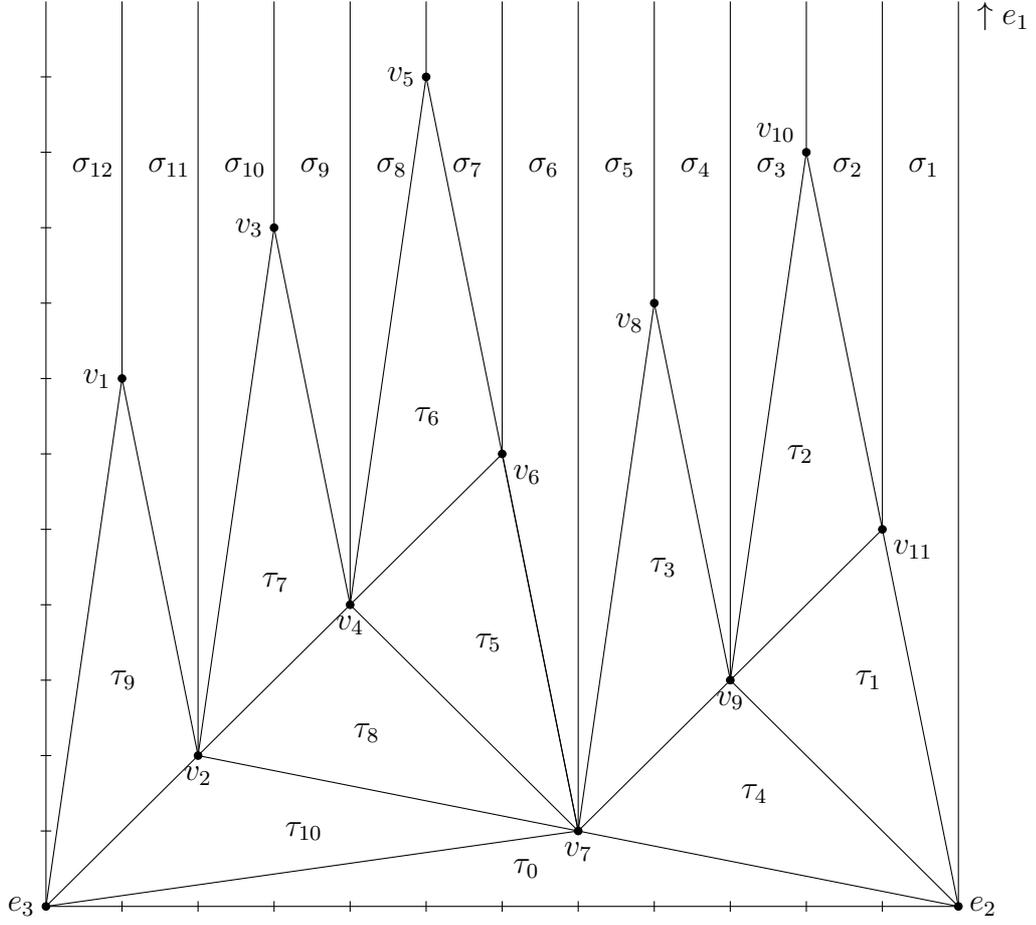
In this section, as a concrete example, we calculate Danilov $G$-bricks and the corresponding set of simple roots $\sra$ for the group $G$ of type $\frac{1}{12}(1,7,5)$.

Let $G$ be the finite group of type $\frac{1}{12}(1,7,5)$ with eigencoordinates $x,y,z$ and $L$ the lattice $L=\Z^3+\Z\cdot \frac{1}{12}(1,7,5)$. Let $X$ denote the quotient variety $\C^3/G$ and $Y$ the economic resolution of $X$. The toric fan $\Sigma$ of $Y$ is shown in Figure~\ref{Fig:Toric Fan of 1/12(1,7,5)}.

To use the recursion process in Section~\ref{Sec:Main Theorem}, first we need to investigate the cases of type $\frac{1}{7}(1,2,5)$ and of type $\frac{1}{5}(1,2,3)$. Let $G_2$ be the group of type $\frac{1}{7}(1,2,5)$ with eigencoordinates $\xi_2,\eta_2,\zeta_2$ and $G_3$ be the group of type $\frac{1}{5}(1,2,3)$ with eigencoordinates $\xi_3,\eta_3,\zeta_3$.
Consider the toric fans $\Sigma_2$ and $\Sigma_3$ of the economic resolutions for the type $\frac{1}{7}(1,2,5)$ and the type $\frac{1}{5}(1,2,3)$, respectively.
\subsubsection*{\bf $G$-bricks}\label{subsec:G-bricks for 1/12(1,7,5)}
We now calculate $G$-bricks corresponding to the following two maximal cones in $\Sigma$:
\[
\begin{array}{ll}
\sigma_4 &=\Cone \big( \frac{1}{12}(12,0,0), \frac{1}{12}(3,9,3), \frac{1}{12}(8,8,4) \big),\\[3pt]
\tau_3 &=\Cone \big( \frac{1}{12}(1,7,5), \frac{1}{12}(3,9,3), \frac{1}{12}(8,8,4) \big).
\end{array}
\]
The cones $\sigma_4,\tau_3$ are on the right side of the lowest vector $v=\frac{1}{12}(1,7,5)$. Their corresponding cones $\sigma'_4,\tau'_3$ in $\Sigma_3$, respectively, are
\begin{equation}\label{in Eg section:cones for 1/5(1,2,3)}
\begin{array}{ll}
\sigma'_4 &=\Cone\big( \tfrac{1}{5}(5,0,0), \tfrac{1}{5}(1,2,3), \tfrac{1}{5}(1,1,4)\big),\\[2pt]
\tau'_3 &=\Cone\big( \tfrac{1}{5}(0,0,5), \tfrac{1}{5}(1,2,3), \tfrac{1}{5}(1,1,4) \big).
\end{array}
\end{equation}
Observe that the cones $\sigma'_4,\tau'_3$ are on the left side of $\Sigma_3$. 
To use the recursion, let $G_{32}$ be the group of type $\frac{1}{2}(1,1,1)$ with eigencoordinates $\xi_{32},\eta_{32},\zeta_{32}$. Let $\Sigma_{32}$ denote the fan of the economic resolution of the quotient $\C^3/G_{32}$.
In $\Sigma_{32}$, there exist two cones $\sigma''_4,\tau''_3$ corresponding to $\sigma'_4,\tau'_3$, respectively:
\[
\begin{array}{rl}
\sigma''_4 &=\Cone\big( \frac{1}{2}(2,0,0), \frac{1}{2}(0,2,0), \frac{1}{2}(1,1,1)\big),\\[2pt]
\tau''_3 &=\Cone\big( \frac{1}{2}(0,0,2), \frac{1}{2}(0,2,0), \frac{1}{2}(1,1,1) \big).
\end{array}
\]
\begin{figure}[h]
\begin{center}
\begin{tikzpicture}
\coordinate [label=left:$e_3$] (e3) at (9,0);
\coordinate [label=right:$e_2$] (e2) at (12,0);
\foreach \x in {1,2,3,4,5,6,7,8,9,10,11}
    \draw (0.25*\x+9 ,1pt) -- (0.25*\x+9 ,-1pt);
\foreach \y in {1,2,3,4,5,6,7,8,9,10,11}
    \draw (8.97,0.25*\y) -- (9.03,0.25*\y);
    
\coordinate  (v1) at (9.25,1.75);
\draw[fill] (v1) circle [radius=0.01];
\coordinate  (v2) at (9.5,0.5);
\draw[fill] (v2) circle [radius=0.01];
\coordinate  (v3) at (9.75,2.25);
\draw[fill] (v3) circle [radius=0.01];
\coordinate  (v4) at (10,1);
\draw[fill] (v4) circle [radius=0.01];
\coordinate (v5) at (10.25,2.75);
\draw[fill] (v5) circle [radius=0.01];
\coordinate  (v6) at (10.5,1.5);
\draw[fill] (v6) circle [radius=0.01];
\coordinate  (v7) at (10.75,0.25);
\draw[fill] (v7) circle [radius=0.01];
\coordinate (v8) at (11,2);
\draw[fill] (v8) circle [radius=0.01];
\coordinate  (v9) at (11.25,0.75);
\draw[fill] (v9) circle [radius=0.01];
\coordinate (v10) at (11.5,2.5);
\draw[fill] (v10) circle [radius=0.01];
\coordinate  (v11) at (11.75,1.25);
\draw[fill] (v11) circle [radius=0.01];

\draw (e3) -- (e2);
\draw (e3) -- (9,3);
\draw (e2) -- (12,3);
\draw (e3) -- (v1);
\draw (e3) -- (v6);
\draw (v6) -- (v7);
\draw (e3) -- (v7);
\draw (v7) -- (e2);
\draw (v1) -- (v2);
\draw (v2) -- (v3);
\draw (v3) -- (v4);
\draw (v4) -- (v5);
\draw (v5) -- (v6);
\draw (v6) -- (v7);
\draw (v7) -- (v8);
\draw (v7) -- (v11);
\draw (v9) -- (e2);
\draw (v8) -- (v9);
\draw (v9) -- (v10);
\draw (v10) -- (v11);
\draw (v11) -- (e2);
\draw (v4) -- (v7);
\draw (v2) -- (v7);
\foreach \z in {1,2,3,4,5,6,7,8,9,10,11}
\draw (v\z) -- (9+0.25*\z,3);

\draw[->] (8,1.5) -- (8.5,1.5);
\draw[->] (3.5,1.5) -- (4,1.5);
\node [right] at (12.1,2.8) {$\uparrow e_1$};    

\coordinate [label=left:$e_3$] (e35) at (4.5,0);
\coordinate [label=right:$e_2$] (e25) at (7.5,0);
\foreach \x in {1,2,3,4}
    \draw (0.6*\x+4.5 ,1pt) -- (0.6*\x+4.5 ,-1pt);
\foreach \y in {1,2,3,4}
    \draw (4.47,0.6*\y) -- (4.53,0.6*\y);
    
\coordinate  (v15) at (5.1,1.8);
\draw[fill] (v15) circle [radius=0.01];
\coordinate  (v25) at (5.7,0.6);
\draw[fill] (v25) circle [radius=0.01];
\coordinate  (v35) at (6.3,2.4);
\draw[fill] (v35) circle [radius=0.01];
\coordinate  (v45) at (6.9,1.2);
\draw[fill] (v45) circle [radius=0.01];
\draw (e35) -- (e25);
\draw (e35) -- (4.5,3);
\draw (e25) -- (7.5,3);
\foreach \z in {1,2,3,4}
\draw (v\z5) -- (4.5+0.6*\z,3);
\draw (e35) -- (v15);
\draw (e35) -- (v45);
\draw (v25) -- (v15);
\draw (v25) -- (v35);
\draw (v25) -- (e25);
\draw (v35) -- (e25);
\node [right] at (7.6,2.8) {$\uparrow e_1$};  

\coordinate [label=left:$e_3$] (e32) at (0,0);
\coordinate [label=right:$e_2$] (e22) at (3,0);
\foreach \x in {1}
    \draw (1.5*\x,1pt) -- (1.5*\x ,-1pt);
\foreach \y in {1}
    \draw (-0.03,1.5*\y) -- (0.03,1.5*\y);
    
\node [right] at (3.1,2.8) {$\uparrow e_1$};  
\coordinate  (v12) at (1.5,1.5);
\draw[fill] (v12) circle [radius=0.01];
\draw (e32) -- (e22);
\draw (e32) -- (0,3);
\draw (e22) -- (3,3);
\foreach \z in {1}
\draw (v\z2) -- (1.5*\z,3);
\draw (v12) -- (e32);
\draw (v12) -- (e22);

\node [right] at (2,2) {$\sigma''_{4}$};
\node [right] at (1.2,0.6) {$\tau''_{3}$};    
\node [right] at (5.15,2) {$\sigma'_{4}$};
\node [right] at (4.75,0.8) {$\tau'_{3}$};
    
\node [right] at (0.7,-0.5) {$\tfrac{1}{2}(1,1,1)$};
\node [right] at (5.2,-0.5) {$\tfrac{1}{5}(1,2,3)$};
\node [right] at (9.7,-0.5) {$\tfrac{1}{12}(1,7,5)$};
\end{tikzpicture}
\end{center}
\caption{Recursion process for $\frac{1}{12}(1,7,5)$}\label{Fig:recursion process}
\end{figure}
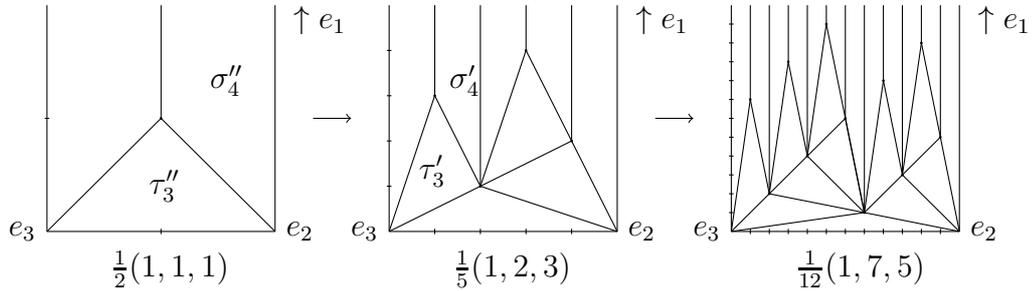

As in Section~\ref{subsec:G-bricks for 1/r(1,r-1,1)}, the $G_{32}$-bricks $\Gamma''_4, \Gamma''_3$ corresponding to $\sigma''_4,\tau''_3$ are
\[
\begin{array}{ll}
\Gamma''_4 &=\big\{1, \zeta_{23} \big\},\\[3pt]
\Gamma''_3 &=\big\{1, \xi_{23}\big\}.
\end{array}
\]
Using the left round down function $\phi_{32}$ for $\frac{1}{5}(1,2,3)$
\[
\phi_{32}\colon \xi_{3}^{a}\eta_{3}^{b}\zeta_{3}^{c}\ \mapsto \ \xi_{32}^{a}\eta_{32}^{\lf\frac{a+2b+3c}{5}\rf}\zeta_{32}^{c},
\]
we can see that the $G_{3}$-bricks $\Gamma'_4, \Gamma'_3$ corresponding to $\sigma'_4,\tau'_3$ are
\[
\begin{array}{lll}
\Gamma'_4 &\eqd\phi_{32}\inv\big(\Gamma''_4\big) &=\big\{1,\, \eta_3, \,\eta_3^2,\,\zeta_3, \,\frac{\zeta_3}{\eta_{3}} \big\},\\[4pt]
\Gamma'_3 &\eqd\phi_{32}\inv\big(\Gamma''_2\big) &=\big\{1,\, \eta_3,\, \eta_3^2,\, \xi_3,\, \xi_3\eta_3 \big\}.
\end{array}
\]
To get the $G$-bricks $\Gamma_4$ and $\Gamma_3$ corresponding to $\sigma_4$ and $\tau_3$, respectively,
we use the right round down function $\phi_{3}$ for $\frac{1}{12}(1,7,5)$:
\[
\phi_{3}\colon x^{a}y^{b}z^{c}\ \mapsto \ \xi_{3}^{a}\eta_{3}^{b}\zeta_{3}^{\lf\frac{a+7b+5c}{12}\rf}.
\]
We get
\[
\begin{array}{lll}
\Gamma_4 &\eqd\phi_{3}\inv\large(\Gamma'_4\large) &=\Big\{1, y, \frac{y}{z},\frac{y^2}{z}, \frac{y^2}{z^2}, z, z^2,z^3,z^4,\frac{z^4}{y},\frac{z^5}{y},\frac{z^6}{y}  \Big\},\\[4pt]
\Gamma_3 &\eqd\phi_{3}\inv\large(\Gamma'_2\large) &=\left\{1,  x,xz,xz^2, xy,\frac{xy}{z}, y, \frac{y}{z},\frac{y^2}{z}, \frac{y^2}{z^2}, z, z^2 \right\}.
\end{array}
\]

Let us consider the following two cones in $\Sigma$:
\begin{align*}
\sigma_9 &=\Cone\big( \tfrac{1}{12}(12,0,0), \tfrac{1}{12}(9,3,9), \tfrac{1}{12}(4,4,8)\big),\\[3pt]
\tau_7 &=\Cone\big( \tfrac{1}{12}(2,2,10), \tfrac{1}{12}(9,3,9), \tfrac{1}{12}(4,4,8) \big).
\end{align*}
Observe that the cones $\sigma_9$, $\tau_7$ are on the left side of $v$. The cones in $\Sigma_2$ corresponding to $\sigma_9$, $\tau_7$ are
\begin{align*}
\sigma'_9 &=\Cone\big( \tfrac{1}{7}(12,0,0), \tfrac{1}{7}(5,3,4), \tfrac{1}{7}(2,4,3)\big),\\[3pt]
\tau'_7 &=\Cone\big( \tfrac{1}{7}(1,2,5), \tfrac{1}{7}(5,3,4), \tfrac{1}{7}(2,4,3) \big).
\end{align*}
Note that the cones $\sigma'_9$, $\tau'_7$ are on the right side of the fan $\Sigma_2$ and that the right side is equal to the fan $\Sigma_3$ of the economic resolution for $\frac{1}{5}(1,2,3)$. Moreover, the cones in $\Sigma_3$ corresponding to $\sigma'_9$, $\tau'_7$ are $\sigma'_4$, $\tau'_3$, respectively, in \eqref{in Eg section:cones for 1/5(1,2,3)}. Thus the corresponding $G_{23}$-bricks $\Gamma''_9$, $\Gamma''_7$ are:
\begin{align*}
\Gamma''_9  &=\big\{1,\, \eta_{23}, \,\eta_{23}^2,\,\zeta_{23}, \,\tfrac{\zeta_{23}}{\eta_{23}} \big\},\\[3pt]
\Gamma''_7 &=\big\{1,\, \xi_{23},\, \xi_{23}\eta_{23},\, \eta_{23},\, \eta_{23}^2 \big\},
\end{align*}
where $G_{23}$ denotes the group of type $\frac{1}{5}(1,2,3)$ with eigencoordinates $\xi_{23},\eta_{23},\zeta_{23}$.
Using the right round down function $\phi_{23}$ for $\frac{1}{7}(1,2,5)$
\[
\phi_{23}\colon \xi_{2}^{a}\eta_{2}^{b}\zeta_{2}^{c}\ \mapsto \ \xi_{23}^{a}\eta_{23}^{b}\zeta_{23}^{\lf\frac{a+2b+5c}{7}\rf},
\]
we can calculate the $G_2$-bricks corresponding to $\sigma'_9,\tau_7'$:
\[
\begin{array}{lll}
\Gamma'_9 &\eqd\phi_{23}\inv\large(\Gamma''_9\large) &=\Big\{1,\, \eta_2,\,\eta_2^2,\,\zeta_2,\,\zeta_2^2,\,\tfrac{\zeta_2^2}{\eta_2},\, \tfrac{\zeta_2^3}{\eta_2}\Big\},\\[6pt]
\Gamma'_7 &\eqd\phi_{23}\inv\large(\Gamma''_7\large) &=\Big\{ 1, \,\xi_2, \, \xi_2\eta_2,\,\xi_2\zeta_2,\, \eta_2,\,\eta_2^2,\,\zeta_2,\,\zeta_2^2\Big\}.
\end{array}
\]
Lastly, from the left round down function $\phi_2$ for $\frac{1}{12}(1,7,5)$
\[
\phi_{2}\colon x^{a}y^{b}z^{c}\ \mapsto \ \xi_{2}^{a}\eta_{2}^{\lf\frac{a+7b+5c}{12}\rf}\zeta_{2}^{c},
\]
it follows that the $G$-bricks $\Gamma_9,\Gamma_7$ corresponding to $\sigma_9, \tau_7$ are:
\begin{align*}
\Gamma_9 &=\Big\{1,y,y^2,y^3,y^4,y^5,z,z^2,\tfrac{z^2}{y},\tfrac{z^2}{y^2}, \tfrac{z^2}{y^3}, \tfrac{z^3}{y^3}\Big\},\\[5pt]
\Gamma_7 &=\Big\{1,x,xy,xy^2,xy^3,xz, y,y^2,y^3,y^4,y^5,z\Big\}.
\end{align*}

For $0\leq i\leq 12$, let $v_i$ denote the lattice point $\frac{1}{12}(\overline{7i},i,12-i)$ in $L$.
For each 3-dimensional cone $\sigma$ in Figure~\ref{Fig:Toric Fan of 1/12(1,7,5)} on page~\pageref{Fig:Toric Fan of 1/12(1,7,5)}, Table~\ref{Tab:G-iraffes for 1/12(1,7,5)} on page~\pageref{Tab:G-iraffes for 1/12(1,7,5)} shows the corresponding $G$-brick $\Gamma_{\sigma}$.
\begin{table}
\begin{center}
\begin{tabular}{c|c|c|> {\cellstart}m{6em}<{\cellfinish}}\hline
Cone& Generators & $G$-brick $\Gamma_\sigma$ & Coordinates on $U_{\sigma}$ \\ \hline\hline

$\sigma_1$ &$e_1,e_2, v_{11}$& $1,z^{},z^{2},z^{3},z^{4},z^{5},z^{6},z^{7},z^{8},z^{9},z^{10},z^{11}$ & $\tfrac{x}{z^5},\tfrac{y^{}}{z^{11}},{z^{12}}$\\ \hline

$\sigma_{2}$ &$e_1,v_{10},v_{11}$& $1,y,\tfrac{y}{z},z^{},z^{2},z^{3},z^{4},z^{5},z^{6},z^{7},z^{8},z^{9}$ & $\tfrac{x}{z^5},\tfrac{y^{2}}{z^{10}},\tfrac{z^{11}}{y^{}}$\\ \hline

$\sigma_{3}$ &$e_1,v_{9},v_{10}$& $1, y, \tfrac{y}{z},\tfrac{y^2}{z}, \tfrac{y^2}{z^2},\tfrac{y^2}{z^3},\tfrac{y^2}{z^4}, \tfrac{y^2}{z^5},z^{},z^{2},z^{3},z^{4}$ & $\tfrac{xz^5}{y^2},\tfrac{y^{3}}{z^{9}},\tfrac{z^{10}}{y^{2}}$\\ \hline

$\sigma_{4}$ &$e_1,v_{8},v_{9}$& $1, y, \frac{y}{z},\frac{y^2}{z}, \frac{y^2}{z^2}, z, z^2,z^3,z^4,\frac{z^4}{y},\frac{z^5}{y},\frac{z^6}{y}$ & $\tfrac{xy}{z^4},\tfrac{y^{4}}{z^{8}},\tfrac{z^{9}}{y^{3}}$\\ \hline

$\sigma_{5}$ &$e_1,v_{7},v_{8}$& $1, y, \tfrac{y}{z}, \tfrac{y^2}{z}, \tfrac{y^2}{z^2}, \tfrac{y^3}{z^2},\tfrac{y^3}{z^3}, \tfrac{y^3}{z^4},\tfrac{y^4}{z^4},\tfrac{y^4}{z^5},z,z^2$ & $\tfrac{xz^4}{y^3},\tfrac{y^{5}}{z^{7}},\tfrac{z^{8}}{y^{4}}$\\ \hline

$\sigma_{6}$ &$e_1,v_{6},v_{7}$& $1,y,z,z^2, \tfrac{z^2}{y}, \tfrac{z^3}{y}, \tfrac{z^3}{y^2}, \tfrac{z^4}{y^2}, \tfrac{z^5}{y^2}, \tfrac{z^5}{y^3}, \tfrac{z^6}{y^3}, \tfrac{z^6}{y^4}$ & $\tfrac{xy^2}{z^3},\tfrac{y^{6}}{z^{6}},\tfrac{z^{7}}{y^{5}}$\\ \hline

$\sigma_{7}$ &$e_1,v_{5},v_{6}$& $1, y, y^2,y^3,z,z^2, \tfrac{z^2}{y}, \tfrac{z^3}{y}, \tfrac{z^3}{y^2}, \tfrac{z^4}{y^2}, \tfrac{z^5}{y^2}, \tfrac{z^5}{y^3}$ & $\tfrac{xy^2}{z^3},\tfrac{y^{7}}{z^{5}},\tfrac{z^{6}}{y^{6}}$\\ \hline

$\sigma_{8}$ &$e_1,v_{4},v_{5}$& $1, y,y^2,y^3,y^4,y^5,\tfrac{y^5}{z},\tfrac{y^5}{z^2},\tfrac{y^6}{z^2},z,z^2,\tfrac{z^2}{y}$ & $\tfrac{xz^2}{y^5},\tfrac{y^{8}}{z^{4}},\tfrac{z^{5}}{y^{7}}$ \\ \hline

$\sigma_{9}$ &$e_1,v_{3},v_{4}$& $1,y,y^2,y^3,y^4,y^5,z,z^2,\tfrac{z^2}{y},\tfrac{z^2}{y^2}, \tfrac{z^2}{y^3}, \tfrac{z^3}{y^3}$ & $\tfrac{xy^3}{z^2},\tfrac{y^{9}}{z^{3}},\tfrac{z^{4}}{y^{8}}$\\ \hline

$\sigma_{10}$ &$e_1,v_{2},v_{3}$& $1,y,y^2,y^3,y^4,y^5,y^6,\tfrac{y^6}{z},\tfrac{y^7}{z},\tfrac{y^8}{z},\tfrac{y^9}{z},z$ & $\tfrac{xz}{y^6},\tfrac{y^{10}}{z^{2}},\tfrac{z^{3}}{y^{9}}$\\ \hline

$\sigma_{11}$ &$e_1,v_{1},v_{2}$& $1,y,y^2,y^3,y^4,y^5,y^6,z,\tfrac{z}{y},\tfrac{z}{y^2},\tfrac{z}{y^3},\tfrac{z}{y^4}$ & $\tfrac{xy^4}{z},\tfrac{y^{11}}{z^{1}},\tfrac{z^{2}}{y^{10}}$\\ \hline

$\sigma_{12}$ &$e_1,e_3,v_{1}$& $1,y,y^{2},y^{3},y^{4},y^{5},y^{6},y^{7},y^{8},y^{9},y^{10},y^{11}$ & $\tfrac{x}{y^7},{y^{12}},\tfrac{z^{}}{y^{11}}$ \\ \hline

$\tau_{1}$ &$e_2,v_{9},v_{11}$& $1,x,xz,xz^2,xz^3,xz^4, x^2,x^2z,z,z^2,z^3,z^4$ &$\tfrac{x^3}{z^3},\tfrac{y}{x^2z},\tfrac{z^5}{x}$ \\ \hline

$\tau_{2}$ &$v_{9},v_{10},v_{11}$& $1,x,z,xz,z^2,xz^2,z^3,xz^3,z^4,xz^4,y,\tfrac{y}{z}$ &$\tfrac{x^2z}{y},\tfrac{y^2}{xz^5},\tfrac{z^5}{x}$ \\ \hline

$\tau_{3}$ &$v_{7},v_{8},v_{9}$& $1, x,xy,\frac{xy}{z},xz,xz^2, y, \frac{y}{z},\frac{y^2}{z}, \frac{y^2}{z^2}, z, z^2$ &$\tfrac{x^2z}{y},\tfrac{y^3}{xz^4},\tfrac{z^4}{xy}$ \\ \hline

$\tau_{4}$ &$e_{2},v_{7},v_{9}$& $1,x,x^2,x^3,x^4,xz,xz^2,x^2z,x^3z,x^4z,z,z^2$ & $\tfrac{x^5}{z},\tfrac{y}{x^2z},\tfrac{z^3}{x^3}$\\ \hline

$\tau_{5}$ &$v_{4},v_{6},v_{7}$& $1,x,xy,xz,xz^2, \tfrac{xz^2}{y},x^2,x^2y,y,z,z^2,\tfrac{z^2}{y}$ & $\tfrac{x^3y}{z^2},\tfrac{y^2}{x^2},\tfrac{z^3}{xy}$\\ \hline

$\tau_{6}$ &$v_{4},v_{5},v_{6}$& $1,x,xy,xz,xz^2,\tfrac{xz^2}{y}, y,y^2,y^3,z,z^2,\tfrac{z^2}{y}$ &$\tfrac{x^2}{y^2},\tfrac{y^5}{xz^2},\tfrac{z^3}{xy^2}$ \\ \hline

$\tau_{7}$ &$v_{2},v_{3},v_{4}$& $1,x,xy,xy^2,xy^3,xz, y,y^2,y^3,y^4,y^5,z $ & $\tfrac{x^2}{y^2},\tfrac{y^6}{xz},\tfrac{z^2}{xy^3}$ \\ \hline

$\tau_{8}$ &$v_{2},v_{4},v_{7}$& $1,x,xy,xz,x^2,x^2y,x^3,x^3y,x^4,x^4y,y,z$ &$\tfrac{x^5}{z},\tfrac{y^2}{x^2},\tfrac{z^2}{x^3y}$ \\ \hline

$\tau_{9}$ &$e_3,v_{1},v_{2}$& $1,x,xy,xy^2,xy^3,xy^4,y,y^2,y^3,y^4,y^5,y^6$ &$\tfrac{x^2}{y^2}, \tfrac{y^7}{x},\tfrac{z}{xy^4} $ \\ \hline

$\tau_{10}$ &$e_3,v_{2},v_{7}$& $1,x,xy,x^2,x^2y,x^3,x^3y,x^4,x^4y,x^5,x^6,y$ & $\tfrac{x^7}{y},\tfrac{y^2}{x^2},\tfrac{z}{x^5} $\\ \hline

$\tau_{0}$ &$e_{2},e_{3},v_{7}$&  $1,x,x^2,x^3,x^4,x^5,x^6,x^7,x^8,x^9,x^{10},x^{11}$ &$x^{12},\tfrac{y}{x^7}, \tfrac{z}{x^5}$ \\ \hline
\end{tabular}
\end{center}
\caption{$G$-bricks for $G=\frac{1}{12}(1,7,5)$}\label{Tab:G-iraffes for 1/12(1,7,5)}
\end{table}
\subsubsection*{\bf K\k{e}dzierski's GIT chamber}\label{subsec:Kedzierski's GIT chamber for 1/12(1,7,5)}
We calculate K\k{e}dzierski's GIT chamber for $\frac{1}{12}(1,7,5)$.
Since the economic resolution is $\GHil$ for the group of type $\frac{1}{r}(1,r-1,1)$, the sets of simple roots for $\frac{1}{2}(1,1,1)$ and $\frac{1}{3}(1,2,1)$ are $\{\varepsilon_1-\varepsilon_0\}, \{\varepsilon_1-\varepsilon_2, \varepsilon_2-\varepsilon_0\}$,
respectively. By the identification~\eqref{Eqtn:identification of basis}, the set of simple roots for 
$\frac{1}{5}(1,2,3)$ is
\[
\{\varepsilon_3-\varepsilon_4,\ \underline{\varepsilon_4-\varepsilon_1},\ \varepsilon_1-\varepsilon_2,\ \varepsilon_2-\varepsilon_0\},
\]
where the underlined root is the added root as in~\eqref{Eqtn:Def.Admissible roots}. Similarly, the admissible set of simple roots for 
$\frac{1}{7}(1,2,5)$ is
\[
\{
\varepsilon_5-\varepsilon_6,\ \underline{\varepsilon_6-\varepsilon_3},\ \varepsilon_3-\varepsilon_4,\ {\varepsilon_4-\varepsilon_1},\ \varepsilon_1-\varepsilon_2,\ \varepsilon_2-\varepsilon_0\}.
\]
Thus the corresponding set of simple roots for $\frac{1}{12}(1,7,5)$ is
\[
\left\{
\begin{matrix}
\varepsilon_5-\varepsilon_6,\ \varepsilon_6-\varepsilon_{10}, \
\varepsilon_{10}-\varepsilon_{11},\ \varepsilon_{11}-\varepsilon_{8}, \
\varepsilon_8-\varepsilon_9,\ \varepsilon_9-\varepsilon_7,
\\
\underline{\varepsilon_7-\varepsilon_3}, \
\varepsilon_3-\varepsilon_4, \ {\varepsilon_4-\varepsilon_1},\
\varepsilon_1-\varepsilon_2,\ \varepsilon_2-\varepsilon_0
\end{matrix}
\right\}.
\]

With the dual basis $\{\theta_i\}$ with respect to $\{\rho_i\}$, the row vectors of the following matrix are the rays of the admissible Weyl chamber $\wca$:
\[
\left(
\begin{array}{rrrrrrrrrrrr}
-1 & 0 & 0 & 0   & 0 & 0 & 0 & 1   & 0 & 0 & 0 & 0 \\
-1 & 0 & -1 & 0  & 0 & 0 & 0 & 1  & 0 & 1 & 0 & 0 \\
-1 & -1 & -1 & 0 & 0 & 0 & 0 & 1  & 1 & 1 & 0 & 0 \\
-1 & -1 & -1 & 0 & -1 & 0 & 0 & 1  & 1 & 1 & 0 & 1 \\
-1 & -1 & -1 & -1 & -1 & 0 & 0 & 1  & 1 & 1 & 1 & 1 \\
-1 & -1 & 0 & -1 & -1 & 0 & 0 & 0  & 1 & 1 & 1 & 1 \\
-1 & -1 & 0 & -1 & 0 & 0 & 0 & 0  & 1 & 0 & 1 & 1 \\
-1 & -1 & 0 & 0 & 0 & 0 & 0 & 0  & 0 & 0 & 1 & 1 \\
-1 & -1 & 0 & 0 & 0 & 0 & 1 & 0  & 0 & 0 & 1 & 0 \\
-1 & -1 & 0 & 0 & 0 & 1 & 1 & 0  & 0 & 0 & 0 & 0 \\
-1 &  0 & 0 & 0 & 0 & 1 & 0 & 0  & 0 & 0 & 0 & 0 \\
\end{array}
\right).
\]

\bibliographystyle{alpha}


\end{document}